\documentclass{amsart}
\usepackage[centertags]{amsmath}
\usepackage{amsfonts}
\usepackage{amssymb}
\usepackage{amsthm}
\allowdisplaybreaks
\usepackage[colorlinks]{hyperref}
\usepackage{tikz}
\usepackage{tikz-cd}
\usetikzlibrary{arrows.meta,calc,positioning,cd,arrows,mindmap}
\usepackage[normalem]{ulem}
\usepackage[shortlabels]{enumitem} 
\usepackage{marginnote}
\usepackage[all]{xy}
\usepackage{cancel}
\usepackage{bm}
\usepackage{tikz}
\usepackage{tikz-cd}
\usepackage{array,multirow,makecell}
\usepackage{caption}

\usepackage{xcolor} 

\newcommand{\fonction}[5]{\begin{array}{l rcl}
#1: & #2 & \longrightarrow & #3\\
      & #4 & \longmapsto & #5 \end{array}}

\newcommand{\N}{\mathbb{N}}

\newcommand{\C}{\mathbb{C}}
\newcommand{\D}{\mathbb{D}}

\newcommand{\G}{\mathcal{G}}
\newcommand{\Mol}{\mathcal{M}}

\newcommand{\Complex}{\mathbb C}

\newcommand\restr[2]{{
		\left.\kern-\nulldelimiterspace 
		#1 
		\littletaller 
		\right|_{#2} 
}}
\newcommand{\littletaller}{\mathchoice{\vphantom{\big|}}{}{}{}}

\newcommand{\linsp}{{\mathrm{span}}}

\newcommand{\Lip}{{\mathrm{Lip}}}
\newcommand{\HL}{{\mathrm{HL}}}

\newcommand{\F}{\mathcal{F}}              
\newcommand{\conv}{\mathop\mathrm{conv}}
\newcommand{\supp}{\mathop\mathrm{supp}}

\def\<{\langle}
\def\>{\rangle}

\renewcommand{\hat}{\widehat}

\theoremstyle{plain}
\newtheorem{thm}{Theorem}[section]
\newtheorem{theorem}[thm]{Theorem}
\newtheorem{cor}[thm]{Corollary}
\newtheorem{corollary}[thm]{Corollary}

\newtheorem{lemma}[thm]{Lemma}
\newtheorem{prop}[thm]{Proposition}
\newtheorem{proposition}[thm]{Proposition}

\newtheorem{maintheorem}{Theorem} 

\theoremstyle{definition}

\newtheorem{definition}[thm]{Definition}

\newtheorem{remark}[thm]{Remark}

\newtheorem{example}[thm]{Example}

\author[F. Bayart]{Frédéric Bayart}
\address[F. Bayart]{Laboratoire de Mathématiques Blaise Pascal UMR 6620 CNRS, Université Clermont Auvergne, Campus universitaire des Cézeaux, 3 place Vasarely, 63178 Aubière Cedex, France.}
\email{frederic.bayart@uca.fr}

\author[M. Lemay]{Mathis Lemay}
\address[M. Lemay]{Univ Gustave Eiffel, Univ Paris Est Creteil, CNRS, LAMA UMR8050, F-77447 Marne-la-Vallée, France}
\email{mathis.lemay@univ-eiffel.fr}

 \author[C. Petitjean]{Colin Petitjean}
\address[C. Petitjean]{Univ Gustave Eiffel, Univ Paris Est Creteil, CNRS, LAMA UMR8050, F-77447 Marne-la-Vallée, France}
\email{colin.petitjean@univ-eiffel.fr}

\begin{document}
	\title[Compactness and weak compactness of linearized HL maps]{Compactness and weak compactness of linearized holomorphic-Lipschitz maps}
	
	\subjclass[2020]{Primary 47B01, 47B07, 47B33; Secondary 46B20, 54E35}

	\keywords{holomorphic-Lipschitz-free space, holomorphic-Lipschitz operator, compactness, weak compactness}

\begin{abstract}
Any holomorphic Lipschitz map $f$, defined on the open unit ball $B_X$ of a complex Banach space $X$ and taking values in a complex Banach space $Y$, can be uniquely and canonically associated with a bounded linear operator $\widehat f : \mathcal G_0(B_X) \longrightarrow \mathcal G_0(Y)$
between the corresponding holomorphic Lipschitz-free spaces. The aim of this paper is to establish necessary and sufficient conditions for the compactness and weak compactness of such operators.

In particular, we obtain a general characterisation of the compactness of $\widehat f$ in terms of metric properties of its symbol $f$. We then focus on the case where the target space is a subspace of $\ell_p$, $1\leq p<\infty$. When the target space is $\ell_1$, we show that compactness and weak compactness of $\widehat f$ are both equivalent to $\widehat f$ not fixing a copy of $\ell_1$. Finally, when the target space is $\ell_p$ with $1<p<\infty$, we obtain a metric characterisation of the weak compactness of $\widehat f$. As a consequence, we show that compactness and weak compactness need not coincide, even when the target space is $\ell_p$.
\end{abstract}

	\maketitle

\section{Introduction}

Let $X$ and $Y$ be complex Banach spaces. We write $\mathcal L(X,Y)$ for the Banach space of bounded linear operators from $X$ to $Y$ and $X^*=\mathcal L(X,\Complex)$ for the (continuous) dual of $X$. We denote by $B_X$ the open unit ball of $X$ and by $S_X$ its unit sphere.

A mapping $f\colon B_X\to Y$ is said to be \emph{holomorphic} (or Fr\'echet holomorphic) if for every $x\in B_X$ there exists an operator $df(x)\in\mathcal L(X,Y)$ such that
$$
\lim_{z\to 0}\ \sup_{h\in S_X}\left\|\frac{f(x+zh)-f(x)}{z}-df(x)(h)\right\|_Y =0.
$$
The operator $df(x)$ is unique and is called the (Fr\'echet) differential of $f$ at $x$. We refer to \cite{muj} for background on holomorphy and differential calculus in Banach spaces.

We say that $f$ is \emph{Lipschitz} on $B_X$ if there exists $C>0$ such that
$$
\|f(x)-f(y)\|_Y\le C\|x-y\|_X\qquad (x,y\in B_X),
$$
and we write $\|f\|_L$ for the best such constant. We set
$$
\HL_0(B_X,Y)=\{f\colon B_X\to Y:\ f \text{ is holomorphic and Lipschitz, and } f(0)=0\},
$$
endowed with the norm $\|f\|_L$. This is a Banach space; when $Y=\Complex$ we simply write $\HL_0(B_X)=\HL_0(B_X,\Complex)$. It was shown in \cite{aron} that for $f\in \HL_0(B_X,Y)$ one has the identity
$$
\|f\|_L=\sup_{x\in B_X}\|df(x)\|.
$$

Following \cite{aron}, we consider the evaluation map
$$
\delta\colon B_X\longrightarrow \HL_0(B_X)^*,\qquad \delta(x)(g)=g(x)\ \ (g\in \HL_0(B_X)).
$$
This map is a holomorphic isometric embedding, and the set $\{\delta(x):x\in B_X\}$ is linearly independent. The \emph{holomorphic--Lipschitz free space} over $B_X$ is defined by
$$
\mathcal G_0(B_X)=\overline{\linsp}\{\delta(x):x\in B_X\}\subset \HL_0(B_X)^*,
$$
where the closure is taken in the norm topology of $\HL_0(B_X)^*$.
The fundamental universal property of $\mathcal G_0(B_X)$ (see \cite{aron}) states that for every $f\in \HL_0(B_X,Y)$ there exists a unique bounded linear operator $\overline f\colon \mathcal G_0(B_X)\to Y$ satisfying $\overline f(\delta(x))=f(x)$ for all $x\in B_X$, and moreover $\|\overline f\|=\|f\|_L$.

In particular, if $f\in \HL_0(B_X,Y)$ satisfies $f(B_X)\subset B_Y$, there is a unique bounded operator
$$
\widehat f\colon \mathcal G_0(B_X)\longrightarrow \mathcal G_0(B_Y)
\qquad\text{such that}\qquad
\delta\circ f=\widehat f\circ \delta,
$$
and $\|\widehat f\|=\|f\|_L$. We call $\widehat f$ the \emph{holomorphic--Lipschitz operator} associated with~$f$.

As in \cite{ACp54}, one checks that the adjoint of $\widehat f$ is the composition operator
$$
C_f\colon \HL_0(B_Y)\longrightarrow \HL_0(B_X),\qquad C_f(g)=g\circ f.
$$
By Schauder's theorem \cite[Theorem~3.4.15]{Megg} and Gantmacher's theorem \cite[Theorem~3.5.13]{Megg},
compactness (respectively, weak compactness) of $\widehat f$ is equivalent to compactness (respectively, weak compactness) of $C_f$.

The goal of this paper is to provide usable sufficient and necessary conditions for $\widehat f$ (equivalently, $C_f$) to be compact or weakly compact. We pay particular attention to the range $Y=\ell_p$ for $1\le p<\infty$ (with conjugate exponent $q$).

While this work was in preparation, the related paper \cite{LuisCarlos} appeared, with a nontrivial overlap with some of the questions considered here. Although both approaches rely on the linearization through holomorphic-Lipschitz free spaces, they are rather different and complementary. Our approach is more direct and largely self-contained, and focuses primarily on the linearized operator $\widehat f$, whereas \cite{LuisCarlos} studies the adjoint composition operator $C_f$ and draws more extensively on the theory of composition operators on spaces of holomorphic functions, notably through factorization and commutative-diagram arguments. This difference in viewpoint also allows us to obtain several results, such as e.g. Theorems~\ref{thmB}, \ref{thmD} and \ref{thmE} below,  which are not covered by \cite{LuisCarlos}. We encourage the interested reader to consult \cite{LuisCarlos} in parallel for a broader perspective on composition operators on spaces of holomorphic Lipschitz functions.

\medskip
\noindent\textbf{Overview of the main results.} 

We first establish several preliminary results which will be used throughout the paper. In particular, we study different notions expressing that the image $f(B_X)$ remains strictly inside $B_Y$, and we characterize compactness and weak compactness of $\widehat{f}$ in terms of the image under $\widehat{f}$ of the set of molecules of $\G_0(B_X)$. We will also repeatedly use the following notation:
$$
L_f:=\left\{\frac{f(x)-f(y)}{\|x-y\|} : x \neq y \in B_X\right\}.
$$
The set $L_f$ may be viewed as the Lipschitz image of $f$ and will play an important role in our characterizations of compactness and weak compactness.
\medskip

We then turn to the case where the range is an $\ell_p$-space, beginning with $\ell_1$. One of the main tools in this part is the Bessaga--Pe{\l}czy\'nski gliding hump principle (see e.g. \cite[Proposition~1.3.10]{AK}). Combined with a detailed analysis of sequences of molecules, it allows us to prove that, for $\ell_1$-valued symbols, weak compactness and compactness of $\widehat{f}$ coincide. In fact, these properties are also equivalent to several standard operator-theoretic properties.

\begin{maintheorem}\label{thmB}
Let $f \in \HL_0(B_X,\ell_1)$ such that $f(B_X) \subset B_{\ell_1}$. The following assertions are equivalent:
\begin{enumerate}
    \item[(i)] $\widehat{f} : \G_0(B_X) \longrightarrow \G_0(B_{\ell_1})$ is compact;
    \item[(ii)] $\widehat{f} : \G_0(B_X) \longrightarrow \G_0(B_{\ell_1})$ is weakly compact;
    \item[(iii)] $\widehat{f} : \G_0(B_X) \longrightarrow \G_0(B_{\ell_1})$ is super strictly singular;
    \item[(iv)] $\widehat{f} : \G_0(B_X) \longrightarrow \G_0(B_{\ell_1})$ is strictly singular;
    \item[(v)] $\widehat{f}$ does not fix any copy of $\ell_1$.
\end{enumerate}
\end{maintheorem}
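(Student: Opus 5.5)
The trivial implications come first. Compact operators are weakly compact and super strictly singular. Super strictly singular implies strictly singular. A strictly singular operator fixes no infinite-dimensional subspace, in particular no copy of $\ell_1$. A weakly compact operator also fixes no copy of $\ell_1$, since its restriction to such a copy would be a weakly compact isomorphism onto a non-reflexive space. So (i)$\Rightarrow$(ii)$\Rightarrow$(v) and (i)$\Rightarrow$(iii)$\Rightarrow$(iv)$\Rightarrow$(v). The whole content is therefore (v)$\Rightarrow$(i), which I will prove by contraposition: assuming $\widehat f$ is not compact, I produce a copy of $\ell_1$ fixed by $\widehat f$.

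\emph{Reduction to molecules.} By the preliminary characterisation announced in the overview, $\widehat f$ is compact if and only if the image of the set of molecules of $\G_0(B_X)$ is relatively compact in $\G_0(B_{\ell_1})$. This image is the set of $\frac{\delta(f(x))-\delta(f(y))}{\|x-y\|}$ with $x\neq y\in B_X$. If $\widehat f$ is not compact, we get a sequence of molecules $m_n=\frac{\delta(x_n)-\delta(y_n)}{\|x_n-y_n\|}$ whose images $\widehat f(m_n)$ have no Cauchy subsequence, and indeed are $\varepsilon$-separated. The preliminary results on $f(B_X)$ staying strictly inside $B_{\ell_1}$ should let me translate this into a metric statement about $f$. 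Either $L_f$ is not relatively compact in $\ell_1$, or $f(x_n),f(y_n)$ approach the sphere in a way that is not controlled. Since $\|\delta(u)-\delta(v)\|=\|u-v\|$, the norms $\|\widehat f(m_n)\|$ are exactly the norms of the elements $\frac{f(x_n)-f(y_n)}{\|x_n-y_n\|}\in L_f$.

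\emph{Gliding hump in $\ell_1$.} Set $u_n=\frac{f(x_n)-f(y_n)}{\|x_n-y_n\|}\in L_f\subset\ell_1$, which is bounded by $\|f\|_L$. Non-relative-compactness in $\ell_1$ means that, after passing to a subsequence, $u_n$ has a coordinatewise limit $u$ while $u_n-u$ does not go to $0$ in norm. Applying Bessaga--Pe{\l}czy\'nski, a further subsequence of $u_n-u$ (or of the differences $u_{2n}-u_{2n+1}$) is a small perturbation of a block sequence in $\ell_1$ with norms bounded below. Such a block sequence is isometrically equivalent to the $\ell_1$ basis. The aim is to show that the corresponding differences of molecules $m_{2n}-m_{2n+1}$ in $\G_0(B_X)$ are bounded above, which is trivial since molecules have norm $1$. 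Their images under $\widehat f$ should then be bounded below in $\ell_1$-sums:
\[
\Bigl\|\sum a_n \widehat f(m_{2n}-m_{2n+1})\Bigr\|\ge c\sum|a_n|.
\]
This gives the $\ell_1$-lower estimate on both sides, so $\widehat f$ fixes $\overline{\linsp}\{m_{2n}-m_{2n+1}\}\simeq\ell_1$. For the lower estimate I would test against functions in $\HL_0(B_{\ell_1})$ of the form $g(w)=\sum_n \varepsilon_n\langle w,\varphi_n\rangle$. Here the $\varphi_n\in\ell_\infty$ are disjointly supported sign functionals adapted to the blocks. Then $g$ is linear and holds with norm $\le1$, and
\[
\Bigl\langle g,\sum a_n\widehat f(m_{2n}-m_{2n+1})\Bigr\rangle=\sum a_n\varepsilon_n\langle u_{2n}-u_{2n+1},\varphi_n\rangle,
\]
which we choose to be close to $\sum|a_n|\,\|u_{2n}-u_{2n+1}\|$ up to tail errors.

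\emph{Main obstacle.} The delicate step is the case where $L_f$ is relatively compact but $\widehat f$ still fails to be compact. In that case the non-compactness comes from the geometry of $\G_0(B_{\ell_1})$ near the boundary, with $f(x_n)$ tending to $S_{\ell_1}$, rather than from linear data. Linear test functionals cannot detect this, so I would use instead holomorphic Lipschitz functions on $B_{\ell_1}$ that are peaking near distinct boundary points. Candidates are $g_n(w)=\langle w,\varphi_n\rangle^{k_n}$ with large $k_n$ and nearly disjoint norming functionals $\varphi_n$. The gliding hump is then applied to the limit points $f(x_n)$ on the sphere, which, after extraction, separate into disjoint supports. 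The hardest part of the argument is to combine these $g_n$ into one function of Lipschitz norm bounded independently of the signs, which is needed to obtain the $\ell_1$-lower estimate. I would first try to exploit the preliminary results precisely to exclude this case, by showing that it forces the relevant sequence back into the linear gliding-hump situation.
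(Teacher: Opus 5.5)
\textbf{There is a genuine gap, and it sits exactly at the step you flag as the main obstacle.}

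Your trivial implications are fine. So is your case where $L_f$ is not relatively compact. Linear test functions $g\in\ell_\infty$ satisfy $\langle g,\nu\rangle=\langle g,\beta_{\ell_1}(\nu)\rangle$, so your argument is the barycenter argument of Lemma~\ref{lem:l1-noncompact-slopes}, with an explicit gliding hump in place of Schur plus Rosenthal. Applied to an arbitrary non-relatively compact sequence in $L_f$, it even makes the macro-regime lemmas and Lemma~\ref{lem:micro-III-lower} unnecessary for $p=1$.

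The gap is the other alternative left by Theorem~\ref{CScomp}: $L_f$ relatively compact but $\overline{f(B_X)}\cap S_{\ell_1}\neq\varnothing$. Your fallback of excluding this case, or pushing it back into the linear situation, cannot work.
\begin{itemize}
\item \emph{The case occurs.} Take $X=\C$ and $f(z)=ze_1$. Then $L_f=\mathbb T e_1$ is compact, yet $\widehat f$ is not compact by Theorem~\ref{CNcomp2}. It is even an isomorphic embedding, since $\widehat P\circ\widehat f=\mathrm{Id}$ for $P(w)=w_1$.
\item \emph{Linear functionals are blind to it.} The map $\beta_{\ell_1}\circ\widehat f$ sends $\mathcal M$ into $L_f$. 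Since the unit ball of $\G_0(B_X)$ is the closed absolutely convex hull of $\mathcal M$, this map is a compact operator, so nothing factoring through $\beta_{\ell_1}$ can give a lower $\ell_1$-estimate.
\item \emph{Your gliding hump on the sphere has nothing to act on.} Here $f(B_X)$ is relatively compact (Lemma~\ref{lemma:Lf-to-fBX}). Macro sequences of image molecules converge because $\delta$ is isometric, and micro sequences with an interior common limit converge by Lemma~\ref{Bayart}. So every non-convergent sequence is micro, with $f(x_n),f(y_n)$ tending to a single point of $S_{\ell_1}$; the points do not separate into disjoint supports.
\end{itemize}
You also cannot borrow this step from the paper's case analysis. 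It dismisses this ``seemingly missing case'' via Lemma~\ref{Bayart}, whose hypotheses require the common limit to lie in the open ball; for $f(z)=ze_1$ the molecules there genuinely fail to converge. Its test functions $F_s,G_s$ are built through Lemma~\ref{bg} from non-relatively-compact endpoint or slope sequences, which do not exist in this case.

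\textbf{What is missing} is the single test function with a sign-independent Lipschitz bound. It comes from interpolation in $H^\infty(\D)$, not from powers $\langle w,\varphi_n\rangle^{k_n}$: all such powers peak near one boundary point, so their signed sums can have derivatives of norm up to $\sum|s_n|$. One way to close the gap is as follows.
\begin{itemize}
\item Take $y\in\overline{f(B_X)}\cap S_{\ell_1}$, $y^*\in S_{\ell_\infty}$ with $y^*(y)=1$, and $g=y^*\circ f$. Lemma~\ref{contp4} gives $z_n\in B_X$ and $u_n\in S_X$ with $\zeta_n:=g(z_n)\to 1$ and $|\lambda_n|\ge\frac12$, where $\lambda_n:=dg(z_n)(u_n)$.
\item Extract so that $(\zeta_n)$ is interpolating, with constant $M$.
\item Since $f(z_n)$ is an interior point, Lemma~\ref{Bayart} yields $t_n>0$ such that the elementary molecules $\mu_n=\frac{\delta(z_n+t_nu_n)-\delta(z_n)}{t_n}$ satisfy $\|\widehat f(\mu_n)-e_{f(z_n),df(z_n)u_n}\|\le\frac1{4M}$.
\item One checks $\widehat{y^*}(e_{f(z_n),df(z_n)u_n})=\lambda_n e_{\zeta_n,1}$, where $\langle H,e_{\zeta,1}\rangle=H'(\zeta)$.
\item The map $H\mapsto H'$ is an isometry of $\HL_0(\D)$ onto $H^\infty(\D)$. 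Hence, for finitely supported $(c_n)$, Theorem~\ref{interpolating} gives $H$ with $\|H\|_L\le M$ and $H'(\zeta_n)=\overline{\mathrm{sgn}(c_n\lambda_n)}$ on the support.
\item Using $\|\widehat{y^*}\|\le 1$, this gives $\|\sum c_n\widehat f(\mu_n)\|\ge\frac1{4M}\sum|c_n|$. Since $\|\sum c_n\mu_n\|\le\sum|c_n|$, $\widehat f$ fixes a copy of $\ell_1$.
\end{itemize}
Combined with your case where $L_f$ is not relatively compact, this completes (v)$\Rightarrow$(i). The boundary argument does not use that the target is $\ell_1$; it works for any $Y$.
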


As a first consequence of Theorem~\ref{thmB}, we obtain the same equivalence between compactness and weak compactness whenever the range of $f$ is finite dimensional.

\begin{maintheorem}\label{thmC}
Let $f \in \HL_0(B_X,Y)$ satisfy $f(B_X) \subset B_Y$ and $\dim(Y)<\infty$. Then
$$
\widehat{f} \text{ is compact } \iff \widehat{f} \text{ is weakly compact.}
$$
\end{maintheorem}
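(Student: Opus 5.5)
The plan is to reduce Theorem~\ref{thmC} to Theorem~\ref{thmB} by embedding the finite-dimensional target $Y$ into $\ell_1$. Compactness always implies weak compactness, so only the converse needs proof. Assume $\widehat f$ is weakly compact and let $n=\dim Y$.

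\textbf{Step 1: transfer $f$ into $\ell_1$.} Since $\dim Y = n$, there is a linear isomorphism $T\colon Y\to \ell_1^n\subset\ell_1$. I will rescale $T$ so that $\|T\|\le 1$ and $T(B_Y)\subset B_{\ell_1}$. Then $g:=T\circ f\in\HL_0(B_X,\ell_1)$ satisfies $g(B_X)\subset B_{\ell_1}$. By functoriality of the linearization, $\widehat g=\widehat{T|_{B_Y}}\circ\widehat f$, where $T|_{B_Y}\colon B_Y\to B_{\ell_1}$ is holomorphic, Lipschitz and vanishes at $0$. Hence $\widehat g$ is weakly compact.

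\textbf{Step 2: apply Theorem~\ref{thmB}.} Theorem~\ref{thmB} then says that $\widehat g$ is compact.

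\textbf{Step 3: pull compactness back to $\widehat f$.} This is the delicate step, because $T^{-1}$ does not map $B_{\ell_1}$ into $B_Y$ once $T$ has been shrunk. I would work on the adjoint side with composition operators. Compactness of $\widehat g$ is equivalent to compactness of $C_g=C_f\circ C_T\colon \HL_0(B_{\ell_1})\to\HL_0(B_X)$, and I want compactness of $C_f$ on $\HL_0(B_Y)$.

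The key point is that every $h\in\HL_0(B_Y)$ factors through $T$ on the relevant region. The function $h\circ T^{-1}$ is defined and holomorphic Lipschitz on $T(B_Y)\subset \ell_1^n$, and this set is open in $\ell_1^n$ but not in $\ell_1$. I would precompose with the norm-one projection $P\colon\ell_1\to\ell_1^n$ onto the first $n$ coordinates. However, $P(B_{\ell_1})=B_{\ell_1^n}$ is not contained in $T(B_Y)$ unless $T$ is chosen carefully.

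To fix this, I will not shrink $T$ by its norm but choose it so that
\[
T(B_Y)\subset B_{\ell_1^n}\subset T(\lambda^{-1}B_Y)\quad\text{for some }\lambda\in(0,1),
\]
which is possible by equivalence of norms.

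\textbf{Main obstacle: the factorization only works away from the boundary.} The map $h\mapsto h\circ T^{-1}\circ P$ then yields functions defined only on $\lambda^{-1}$-dilates, so it does not directly give a bounded map $\HL_0(B_Y)\to\HL_0(B_{\ell_1})$. I would first try to combine this with the paper's preliminary notion that $f(B_X)$ stays strictly inside $B_Y$, which should follow from weak compactness by the earlier preliminary results.

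If $f(B_X)\subset rB_Y$ with $r<1$, the obstacle disappears. Replace $f$ by $r^{-1}f$ in the construction, apply Theorem~\ref{thmB} to $g=T\circ(r^{-1}f)$, and write $C_f(h)=C_g\bigl(h(r\,T^{-1}P\,\cdot)\bigr)$. Here $h\mapsto h(rT^{-1}P\,\cdot)$ is bounded from $\HL_0(B_Y)$ to $\HL_0(B_{\ell_1})$ once $T$ is normalized so that $rT^{-1}P(B_{\ell_1})\subset B_Y$. That factorization exhibits $C_f$ as a compact operator composed with a bounded one, so $C_f$, and hence $\widehat f$, is compact.

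Alternatively, one could use the earlier molecule characterization of compactness and argue directly with $L_f\subset Y$. In finite dimensions, relative compactness of $L_f$ is automatic from boundedness, so only the strict-inside condition needs to be extracted from weak compactness.
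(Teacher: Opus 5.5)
\textbf{There is a genuine gap: the key step is missing.} Your Steps 1 and 2 are correct. Your closing remark is also correct: since $L_f$ is bounded, hence relatively compact, when $\dim Y<\infty$, Theorem~\ref{CScomp} reduces everything to proving $\overline{f(B_X)}\subset B_Y$. But that is exactly the step you leave as ``should follow from the earlier preliminary results'', and no earlier result gives it.
\begin{itemize}
\item Proposition~\ref{prop-images-relation} and Lemma~\ref{contp4} carry no operator-theoretic hypothesis.
\item Theorem~\ref{CNcomp2} needs compactness, not weak compactness.
\item The only statement of the form ``$\widehat f$ weakly compact $\Rightarrow \overline{f(B_X)}\subset B_Y$'' is Proposition~\ref{lem:wkcomp-strict-contraction}. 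Its proof uses Theorem~\ref{thmC} itself, so citing it is circular.
\end{itemize}
Your Steps 1--2 cannot supply this step either. Compactness of $\widehat{T\circ f}$ yields $\|T\circ f\|_\infty<1$ (Theorem~\ref{thmD}). However, an isomorphism $T$ with $T(B_Y)\subset B_{\ell_1}$ does not map $S_Y$ into $S_{\ell_1}$, so this says nothing about whether $f(B_X)$ approaches $S_Y$. The heart of the theorem is therefore not proved.

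\textbf{The factorization in Step 3 also fails in general.} You require $T(r^{-1}f(B_X))\subset B_{\ell_1}$ and $rT^{-1}P(B_{\ell_1})\subset B_Y$. Together these force $\|T^{-1}\|\,\sup_{x}\|Tf(x)\|_1\le 1$. For $X=Y$ and $f=\rho\,\mathrm{id}$, this reads $\rho\,\|T\|\,\|T^{-1}\|\le 1$. That is impossible for $\rho$ close to $1$ whenever $Y$ is not isometric to $\ell_1^n$, for example $Y=\ell_2^n$, where $\|T\|\,\|T^{-1}\|\ge\sqrt n$. Theorem~\ref{CScomp} makes this factorization unnecessary anyway.

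\textbf{The missing idea.} Detect the boundary with a rank-one map rather than an isomorphism, because only a norming functional preserves contact with the sphere.
\begin{itemize}
\item Suppose $y\in\overline{f(B_X)}\cap S_Y$. Take $y^*\in S_{Y^*}$ with $y^*(y)=1$, and set $h:=J\circ y^*\circ f$, where $J\lambda=\lambda e_1\in\ell_1$.
\item Then $\widehat h=\widehat J\circ\widehat{y^*}\circ\widehat f$ is weakly compact, hence compact by Theorem~\ref{thmB}.
\item Apply Theorem~\ref{CNcomp2} to $h$ with the first coordinate functional. It says that $y^*(f(z_j))\to 1$ forces $\|y^*\circ df(z_j)\|\to 0$.
\item This contradicts Lemma~\ref{contp4} applied to $g=y^*\circ f$, which has $1\in\overline{g(B_X)}$.
\end{itemize}
Hence $\overline{f(B_X)}\subset B_Y$, and Theorem~\ref{CScomp} gives compactness of $\widehat f$. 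This is, in contrapositive form, the paper's proof of Corollary~\ref{cor:Finite-dim-range}.
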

\medskip

We next consider arbitrary Banach spaces $X$ and $Y$. Our first general result gives a metric characterization of compactness of the holomorphic--Lipschitz operator $\widehat{f}$, expressed entirely in terms of the symbol $f$.

\begin{maintheorem}\label{thmD}
Let $f \in \HL_0(B_X,Y)$ such that $f(B_X) \subset B_Y$. Then
$$
\widehat{f} : \G_0(B_X) \longrightarrow \G_0(B_Y)
\text{ is compact }
\iff
\overline{L_f} \text{ is compact and } \|f\|_{\infty}<1.
$$
\end{maintheorem}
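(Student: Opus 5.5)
We argue through molecules, using the preliminary characterisation announced in the overview: $\widehat f$ is compact if and only if the image under $\widehat f$ of the set of molecules
$$m_{x,y}=\frac{\delta(x)-\delta(y)}{\|x-y\|},\qquad x\neq y\in B_X,$$
is relatively compact in $\G_0(B_Y)$. This holds because the closed absolutely convex hull of the molecules is the closed unit ball of $\G_0(B_X)$, by Hahn--Banach and the definition of $\|\cdot\|_L$. Since
$$\widehat f(m_{x,y})=\frac{\delta(f(x))-\delta(f(y))}{\|x-y\|},$$
the whole task is to relate relative compactness of this family in $\G_0(B_Y)$ to compactness of $\overline{L_f}$ in $Y$ together with $\|f\|_\infty<1$.

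\textbf{Sufficiency.} Assume $\overline{L_f}$ is compact and $\|f\|_\infty=r<1$.
\begin{itemize}
\item Since $\delta$ is holomorphic, for $u,v\in rB_Y$ we write
$$\delta(u)-\delta(v)=\int_0^1 d\delta\bigl(v+t(u-v)\bigr)(u-v)\,dt.$$
This gives
$$\widehat f(m_{x,y})=\int_0^1 d\delta\bigl(f(y)+t(f(x)-f(y))\bigr)\Bigl(\frac{f(x)-f(y)}{\|x-y\|}\Bigr)\,dt.$$
\item The map $\Phi:(w,h)\mapsto d\delta(w)(h)$ is continuous, hence uniformly continuous, on $rB_Y\times K$ for any bounded $K$. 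Indeed, on $rB_Y$ the derivative $d\delta$ is Lipschitz in $w$ with constant $O((1-r)^{-2})$, by Cauchy estimates applied to $g\in \HL_0(B_Y)$ with $\|dg\|\le1$.
\item Hence $\widehat f(m_{x,y})$ lies in the closed convex hull of $\{d\delta(w)(h): w\in \overline{\conv}(f(B_X)),\ h\in\overline{L_f}\}$. We avoid needing compactness in $w$ as follows. For fixed $h$, the maps $w\mapsto d\delta(w)(h)$ are equi-Lipschitz on $rB_Y$. By the Cauchy estimate, $\|d^2\delta(w)(h,k)\|\lesssim\|h\|\|k\|/(1-r)$, and we take $k=u-v$.
\item The obstacle is that $\{f(y)+t(f(x)-f(y))\}$ need not be relatively compact. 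To handle this, compare $\widehat f(m_{x,y})$ with $d\delta(f(y))\bigl(\frac{f(x)-f(y)}{\|x-y\|}\bigr)$. The error is of size $\|f(x)-f(y)\|^2/\|x-y\|$. This is small when $\|x-y\|$ is small, since it is at most $\|f\|_L^2\|x-y\|$.
\item When $\|x-y\|\ge\varepsilon$, we instead use that $\widehat f(m_{x,y})$ is a bounded multiple of $\delta(f(x))-\delta(f(y))$. So we still need compactness of $\delta(f(B_X))$ in that regime, or we show that the molecules with $\|x-y\|\ge\varepsilon$ are handled by an alternative route.
\end{itemize}
We therefore expect to use the preliminary structure of $\G_0(B_Y)$: the operator $\G_0(rB_Y)\to\G_0(B_Y)$ is a perturbation of the linearisation $Y\to \G_0(B_Y)$ near $0$, through the identification $\G_0(B_Y)=Y\oplus(\text{higher order})$. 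Concretely, we decompose $\delta(u)=u+R(u)$, where $u\mapsto R(u)$ on $rB_Y$ has derivative factoring through $u$ with small norm near $0$. We then run a homogeneous-polynomial expansion $\delta(u)=\sum_n \delta_n(u)$, each $\delta_n$ being an $n$-homogeneous polynomial of norm $\le1$. The difference quotients of $\delta_n\circ f$ are
$$\frac{\delta_n(f(x))-\delta_n(f(y))}{\|x-y\|}=\int_0^1 d\delta_n(\cdot)\Bigl(\frac{f(x)-f(y)}{\|x-y\|}\Bigr)\,dt,$$
with norm summable like $n r^{n-1}$. Truncating at level $N$ reduces the problem to finitely many polynomials. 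For each of these, $d\delta_n(w)(h)$ is a symmetric $n$-linear form evaluated at $(w,\dots,w,h)$ with $h$ in a compact set. The main obstacle is that the factors $w$ are not in a compact set. We expect to resolve this by showing that the polynomial parts $\delta_n\circ f$, for $n\ge2$, are already compact using only $\|f\|_\infty<1$ and compactness of $L_f$ through polarisation. This is the step we expect to be hardest.

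\textbf{Necessity.}
\begin{itemize}
\item If $\widehat f$ is compact, compose with the linearisation $\overline{\mathrm{id}}:\G_0(B_Y)\to Y$, which is bounded. This sends $\widehat f(m_{x,y})$ to the quotient $\frac{f(x)-f(y)}{\|x-y\|}$, so $L_f$ is relatively compact.
\item For $\|f\|_\infty<1$, argue by contradiction. Take $x_n$ with $\|f(x_n)\|\to1$. Choose $y_n$ near $x_n$ with $\|x_n-y_n\|$ small but of size comparable to $1-\|f(x_n)\|$. Then the molecules $\widehat f(m_{x_n,y_n})$ pair with peak-type functions $g_n\in \HL_0(B_Y)$, such as $g_n(u)=\phi_n(u)^{k_n}$-type functionals localised near $f(x_n)$, in a way that prevents any norm-convergent subsequence.
\item If instead all quotients degenerate, we use that $\|\delta(u)-\delta(v)\|$ dominates a hyperbolic-type distance near the sphere. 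Then $\widehat f(m_{x_n,0})$ cannot have a Cauchy subsequence, as $\|\delta(f(x_n))-\delta(f(x_m))\|$ stays bounded below.
\end{itemize}
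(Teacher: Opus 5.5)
There are genuine gaps in both directions.

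\textbf{Sufficiency.} The obstacle you single out, namely that the base points $f(y)+t(f(x)-f(y))$ need not lie in a compact set, does not exist. You also stop at a step you yourself call the hardest. The missing observation is that relative compactness of $L_f$ already gives relative compactness of $f(B_X)$. Taking $y=0$ gives $f(x)=\|x\|\,\frac{f(x)-f(0)}{\|x-0\|}$, so $f(B_X)\subset\bigcup_{\lambda\in[0,1]}\lambda\overline{L_f}$, which is compact (Lemma~\ref{lemma:Lf-to-fBX}). With $\|f\|_\infty=r<1$, Mazur's theorem makes $\overline{\conv}\,f(B_X)$ a compact subset of $r\overline{B_Y}$. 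Your own integral formula then finishes the proof: every $\widehat f(m_{x,y})$ lies in the closed convex hull of $\Phi\bigl(\overline{\conv}\,f(B_X)\times\overline{L_f}\bigr)$. That hull is compact, since $\Phi(w,h)=d\delta(w)(h)$ is jointly continuous on $r\overline{B_Y}\times Y$ by your Cauchy estimate. The paper does the equivalent in Theorem~\ref{CScomp} via a split: it uses that $\delta$ is an isometry when $\|x_n-y_n\|\to\rho>0$, and Lemma~\ref{Bayart} when $\rho=0$. Your homogeneous expansion of $\delta$ is unnecessary and, as written, proves nothing.

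\textbf{Necessity.} The barycentre argument for $L_f$ is correct (Proposition~\ref{CNcomp}), but $\|f\|_\infty<1$ is not established. You do not say in which direction $y_n$ is taken, and nothing in your sketch bounds $\widehat f(m_{x_n,y_n})$ from below: $f$ could a priori be almost constant in the direction $x_n-y_n$.

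The missing ingredient is the Schwarz-lemma/mean-value argument of Lemma~\ref{contp4}. For $\phi_n\in S_{Y^*}$ norming $f(x_n)$, it produces $z_n$ on the segment $[0,x_n]$ with $\phi_n(f(z_n))\to 1$ and $\|\phi_n\circ df(z_n)\|\ge 1-o(1)$. Its estimates involve only $\|f\|_L$, so letting $\phi_n$ vary is harmless. With this, your peak functions do work. Set $h_n(u)=\phi_n(u)^{k_n+1}/(k_n+1)$. Then $(h_n)$ is bounded in $\HL_0(B_Y)$ and $h_n\circ f\to 0$ pointwise, while $\|h_n\circ f\|_L\ge|\phi_n(f(z_n))|^{k_n}\,\|\phi_n\circ df(z_n)\|\ge\frac14$ for suitable $k_n\to\infty$. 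This contradicts compactness of $C_f=\widehat f^{\,*}$; it is the mechanism of Theorem~\ref{CNcomp2}.

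Your third bullet is false. The map $\delta$ is an isometric embedding, so $\|\delta(u)-\delta(v)\|=\|u-v\|$ dominates no hyperbolic distance. Moreover, you have just shown that $f(B_X)$ is relatively compact, so $(\delta(f(x_n)))$ does have Cauchy subsequences.

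The paper's route is different: it obtains $\overline{f(B_X)}\subset B_Y$ from Theorem~\ref{CNcomp2} (equivalently Proposition~\ref{lem:wkcomp-strict-contraction}). It then uses compactness of $\overline{f(B_X)}$, inherited from $L_f$, to upgrade this to $\|f\|_\infty<1$ via Proposition~\ref{prop-images-relation}.
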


The proof combines a sufficient condition for compactness with the necessary conditions established beforehand. In particular, compactness of $\widehat{f}$ forces $L_f$ to be relatively compact and, together with the compactness of $f(B_X)$ which follows from this condition, forces the image of $f$ to remain uniformly away from the boundary of $B_Y$.

We then study weak compactness. We first prove a general sufficient condition when the target space $Y$ is reflexive: if $\overline{f(B_X)}$ is a compact subset of $B_Y$, then $\widehat{f}$ is weakly compact. For targets $\ell_p$, $1<p<\infty$, this sufficient condition turns out to be necessary as well, and yields the following metric characterization.

\begin{maintheorem}\label{thmE}
Let $1<p<\infty$ and let $f \in \HL_0(B_X,\ell_p)$ such that $f(B_X) \subset B_{\ell_p}$. Then
$\widehat{f} : \mathcal{G}_0(B_X) \longrightarrow \mathcal{G}_0(B_{\ell_p})$ verifies: $$
\widehat{f} \text{ is weakly compact }
\iff
\overline{f(B_X)} \text{ is compact and } \|f\|_{\infty}<1.
$$
\end{maintheorem}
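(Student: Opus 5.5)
The plan is to prove the two implications separately; the reverse implication is where the reflexivity of $\ell_p$ enters. For ``$\Leftarrow$'', I will invoke the general sufficient condition announced just before the statement: if $Y$ is reflexive and $\overline{f(B_X)}$ is a compact subset of $B_Y$, then $\widehat f$ is weakly compact. Since $\ell_p$ is reflexive for $1<p<\infty$, it is enough to observe that compactness of $\overline{f(B_X)}$ together with $\|f\|_\infty<1$ gives $\overline{f(B_X)}\subset r\overline{B}_{\ell_p}\subset B_{\ell_p}$ for $r=\|f\|_\infty$.

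For ``$\Rightarrow$'', assume $\widehat f$ is weakly compact. I first want $\|f\|_\infty<1$. I expect the preliminary results on ``$f(B_X)$ remaining strictly inside $B_Y$'' to include a necessary condition of the form: weak compactness of $\widehat f$ forces $\sup\|f\|<1$, at least after passing through a suitable notion such as $f(B_X)$ lying at positive distance from $S_Y$. If that is only available for compactness, I would argue directly with a sequence $x_n$ with $\|f(x_n)\|\to 1$. For suitable radii the molecules $(\delta(x_n)-\delta(t x_n))/\|x_n-tx_n\|$ are bounded. Their images $(\delta(f(x_n))-\delta(f(tx_n)))/\|x_n-tx_n\|$ are then tested against functions in $\HL_0(B_{\ell_p})$ that peak near $f(x_n)/\|f(x_n)\|$, built from norming functionals $y_n^*\in S_{\ell_q}$ and functions like $z\mapsto \log(1-\langle z,y^*\rangle)$ suitably truncated. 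The aim is to produce an $\ell_1$-type behaviour contradicting weak compactness.

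The main step is showing that $f(B_X)$ is relatively compact. Suppose it is not. Then there is a sequence $y_n=f(x_n)$ with no convergent subsequence. By reflexivity, pass to a weakly convergent subsequence $y_n\rightharpoonup y$. Since $y_n-y$ does not tend to $0$ in norm, a gliding hump argument in $\ell_p$ lets me assume, after a small perturbation and a subsequence, that $y_n-y$ is essentially a block sequence $u_n$ with $\|u_n\|\ge\varepsilon$ and disjoint supports.

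Then consider the elements $\delta(x_n)\in\G_0(B_X)$, which have norm at most $\|x_n\|<1$ and hence form a bounded set. Weak compactness yields a subsequence along which $\delta(y_n)=\widehat f(\delta(x_n))$ converges weakly in $\G_0(B_{\ell_p})$. I will contradict this by constructing $g\in\HL_0(B_{\ell_p})$ with $g(y_{2n})$ and $g(y_{2n+1})$ separated. The natural choice is a polynomial of degree $N>p$, for instance
\[
g(z)=\sum_k \bigl(\langle z-y, e_k^*\rangle\bigr)^N
\]
restricted to blocks, or $g(z)=\sum_n \varepsilon_n\langle z,v_n\rangle^N$ with $v_n\in\ell_q$ normalized and norming $u_n$. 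Because the supports are disjoint and $N\ge p$, the map $z\mapsto(\langle z,v_n\rangle)_n$ lands in $\ell_p$, and $\sum|a_n|^N\le(\sum|a_n|^p)^{N/p}$. This makes $g$ bounded with bounded derivative on $B_{\ell_p}$, hence Lipschitz, and the signs $\varepsilon_n$ can be chosen alternating.

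I expect this construction to be the main obstacle. Expanding $\langle y+u_n,v_n\rangle^N$ produces cross terms involving $y$, and these must be controlled by the tail estimates $\langle y,v_n\rangle\to0$. One then gets $g(y_n)\approx\varepsilon_n\|u_n\|^N$, with $|\varepsilon_n\|u_n\|^N|\ge\varepsilon^N$, which is not weakly convergent. This contradiction shows $\overline{f(B_X)}$ is compact.

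Finally, once compactness of $\overline{f(B_X)}$ is known, the condition $\|f\|_\infty<1$ becomes equivalent to $\overline{f(B_X)}\cap S_{\ell_p}=\emptyset$. So the first step only needs to exclude a limit point on the sphere, and I can use that limit point in the peaking argument above.
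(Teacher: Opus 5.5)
\textbf{Overall.} Your ``$\Leftarrow$'' direction is the paper's (Theorem~\ref{CSwcomp}). Your proof that $f(B_X)$ is relatively compact is correct. It is a lighter variant of the paper's argument: the paper feeds $(\delta(f(x_n))-\delta(0))/\|x_n\|$ into the macro-regime Lemma~\ref{lem:macro-case-I-lower} to obtain an $\ell_1$-subsequence. You instead use Eberlein--\v{S}mulian and a single alternating-sign polynomial $\sum_n \varepsilon_n\langle z,v_n\rangle^N$; your condition $N\ge p$ is exactly $(N-1)q\ge p$ from Lemma~\ref{Fs}.

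\textbf{The gap.} The genuine gap is the other half of ``$\Rightarrow$'': showing that weak compactness of $\widehat f$ rules out a point of $\overline{f(B_X)}$ on $S_{\ell_p}$. You only hope that such a result is available. The paper does prove it, as Proposition~\ref{lem:wkcomp-strict-contraction}, but your fallback would not work as stated, for three reasons.
\begin{enumerate}
\item The map $z\mapsto\log(1-\langle z,y^*\rangle)$ has derivative of norm $1/|1-\langle z,y^*\rangle|$, which is unbounded on $B_{\ell_p}$. So it is not in $\HL_0(B_{\ell_p})$, and truncating it destroys holomorphy.
\item Once $\overline{f(B_X)}$ is known to be compact, the images of molecules whose denominators stay bounded below (e.g.\ $\|x_n-tx_n\|$ with $t$ fixed) always have norm-convergent subsequences, because $\delta$ is an isometry. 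So the obstruction can only be detected by micro molecules. For these you need a lower bound on $\|y^*\circ df\|$ at points where $y^*\circ f\to1$, which is exactly Lemma~\ref{contp4}; it is absent from your plan.
\item Peaking test functions alone only refute \emph{compactness}; this is Theorem~\ref{CNcomp2}, via Schauder. The paper upgrades to weak compactness as follows. It passes to $g=y^*\circ f$ and notes that $\widehat g=\widehat{y^*}\circ\widehat f$ is weakly compact. Corollary~\ref{cor:Finite-dim-range} then makes $\widehat g$ compact, and Theorem~\ref{CNcomp2} together with Lemma~\ref{contp4} gives the contradiction.
\end{enumerate}

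\textbf{A self-contained fix (valid for any $Y$).}
\begin{enumerate}
\item Take $y\in\overline{f(B_X)}\cap S_{\ell_p}$, $y^*\in S_{\ell_q}$ with $\langle y^*,y\rangle=1$, and set $g=y^*\circ f$.
\item Lemma~\ref{contp4} gives $z_j\in B_X$ and $u_j\in S_X$ with $g(z_j)\to1$ and $|dg(z_j)(u_j)|\ge\frac12$.
\item The norm part of Lemma~\ref{Bayart}, applied to $(\delta(z_j+tu_j)-\delta(z_j))/t$ as $t\to0$, puts the derivations $e_{z_j,u_j}$ in the unit ball of $\G_0(B_X)$.
\item If $\widehat f$ were weakly compact, then after extraction $\widehat f(e_{z_j,u_j})\rightharpoonup\nu$ and $dg(z_j)(u_j)\to\alpha\neq0$.
\item Let $h_n:=\langle y^*,\cdot\rangle^{n+1}/(n+1)$, so that $\|h_n\|_L\le1$. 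Then $\langle h_n,\widehat f(e_{z_j,u_j})\rangle=g(z_j)^n\,dg(z_j)(u_j)\to\alpha$ as $j\to\infty$, hence $\langle h_n,\nu\rangle=\alpha$ for every $n$.
\item But $|h_n|\le 1/(n+1)$ on $B_{\ell_p}$, so the bounded sequence $(h_n)$ is weak$^*$-null in $\G_0(B_{\ell_p})^*$. This forces $\langle h_n,\nu\rangle\to0$, a contradiction.
\end{enumerate}
Combined with your compactness step and Proposition~\ref{prop-images-relation}, this yields $\|f\|_\infty<1$.

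This route also sidesteps Corollary~\ref{cor:Finite-dim-range}, whose proof applies Theorem~\ref{thm:main} to $h=J\circ g$. For that symbol, any sequence in $\widehat h(\mathcal M)$ without a norm-convergent subsequence has, after extraction, $d_n\to0$ and $a_n,b_n\to a\in S_{\ell_1}$. That is precisely the sub-case the micro-regime analysis dismisses via Lemma~\ref{Bayart}, which requires $a\in B_X$.
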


When $p=1$, one implication remains true:
$$
\widehat{f} \text{ is weakly compact }
\implies
\overline{f(B_X)} \text{ is compact and } \|f\|_{\infty}<1,
$$
whereas the converse fails.

Comparing Theorems~\ref{thmD} and~\ref{thmE}, we see that, for $1<p<\infty$, compactness is governed by the compactness of $\overline{L_f}$, whereas weak compactness only requires the compactness of $\overline{f(B_X)}$, together in both cases with the condition $\|f\|_{\infty}<1$. Since compactness of $\overline{L_f}$ is strictly stronger than compactness of $\overline{f(B_X)}$, this leads to an explicit map $f \in \HL_0(B_{\ell_p},\ell_p)$
such that $f(B_{\ell_p}) \subset B_{\ell_p}$ and $\widehat{f}$ is weakly compact but not compact. 

This emphasizes a difference with the purely Lipschitz setting. Indeed, in \cite{ACp51}, the authors obtained a metric characterization of compactness for the linearization $\widehat{f} : \F(M) \to \F(N)$
of a Lipschitz map between pointed metric spaces. In that setting, compactness and weak compactness of $\widehat{f}$ always coincide. This is no longer the case for holomorphic--Lipschitz maps, as the results below will show. This shows that holomorphic--Lipschitz operators form a substantially richer and more diverse class than their counterparts in the classical Lipschitz-free space setting.
\medskip

Finally, we apply these characterizations to some particular situations. For linear symbols with values in $\ell_p$, compactness and weak compactness of $\widehat{f}$ coincide. We also investigate the case where the domain $X$ is finite dimensional. When $Y$ is reflexive, weak compactness of $\widehat{f}$ admits a particularly simple characterization in terms of $\|f\|_{\infty}<1$. However, finite dimensionality of the domain does not force compactness and weak compactness to coincide, even for reflexive targets. This contrasts with Theorem~\ref{thmC} and illustrates the different roles played by the domain and the range in the study of holomorphic--Lipschitz operators.

\section{Preliminary results}

\subsection{Various properties of inclusion of images}

We start by studying the relationships between various properties of strict inclusion of $f(B_X)$ into $B_Y$ for a map $f\in\HL_0(B_X,Y)$. We shall need the following lemma.

\begin{lemma}\label{contp4}
Let $g \in HL_0(B_X,\mathbb{C})$ such that $g(B_X) \subset \mathbb{D}$. If $1 \in \overline{g(B_X)}$, then there exists a sequence $(z_n) \subset B_X$ such that
$g(z_n) \underset{n \rightarrow +\infty}{\longrightarrow} 1 \text{ and } \underset{n \rightarrow +\infty}{\liminf} \|dg(z_n)\| \geq 1$.
\end{lemma}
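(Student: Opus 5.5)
The plan is to argue by contradiction, combining the Schwarz lemma on complex lines through the origin with the Lipschitz bound $\|g\|_L=\sup_{x\in B_X}\|dg(x)\|=:L$, which holds by the result of \cite{aron} quoted in the introduction.

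\emph{Step 1 (reformulation).} The conclusion fails exactly when there are $c<1$ and $\delta>0$ such that
$$
\|dg(z)\|\le c \quad\text{whenever } z\in B_X,\ |g(z)-1|<\delta .
$$
Indeed, if no such pair exists, then for every $k$ (taking $c=1-1/k$, $\delta=1/k$) we get $z_k$ with $|g(z_k)-1|<1/k$ and $\|dg(z_k)\|>1-1/k$. This sequence satisfies the lemma. So I assume such $c,\delta$ exist and aim for a contradiction.

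\emph{Step 2 (Schwarz lemma on slices).} Pick $(x_n)\subset B_X$ with $g(x_n)\to 1$; necessarily $x_n\neq 0$. Put $r_n=\|x_n\|$ and $u_n=x_n/r_n\in S_X$. The function $h_n(\lambda)=g(\lambda u_n)$ is holomorphic from $\mathbb D$ to $\mathbb D$ with $h_n(0)=0$. By the Schwarz lemma, $|g(\lambda u_n)|\le|\lambda|$; in particular $|g(t u_n)|\le t$ for $t\in[0,1)$.

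\emph{Step 3 (localize near the end of the segment).} Fix $a\in(0,1)$ with $L(1-a)<\delta/2$. Choose $n$ so large that $|g(x_n)-1|<\delta/2$ and $|g(x_n)|>1-(1-c)(1-a)$. By Step 2, $|g(x_n)|\le r_n$, so this forces $r_n>a$. For $t\in[a,r_n]$ the Lipschitz bound gives $|g(tu_n)-g(x_n)|\le L(r_n-t)\le L(1-a)<\delta/2$. Hence $|g(tu_n)-1|<\delta$ on the whole segment, and so $\|dg(tu_n)\|\le c$ there.

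\emph{Step 4 (integrate along the segment).} Write
$$
g(x_n)-g(au_n)=\int_a^{r_n} dg(tu_n)(u_n)\,dt .
$$
This gives $|g(x_n)|\le |g(au_n)|+c(r_n-a)\le a+c(1-a)=1-(1-c)(1-a)$, using Step 2 at $t=a$. This contradicts the choice of $n$ in Step 3.

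The main obstacle is that the directions $u_n$ vary with $n$. The argument therefore needs a starting point $au_n$ whose modulus bound is uniform in $n$, and a segment that stays in the region $\{|g-1|<\delta\}$ uniformly in $n$. The Schwarz lemma supplies the first (the bound $|g(au_n)|\le a$), and the global Lipschitz constant $L$ supplies the second. This is why $a$ is fixed before $n$ is chosen.
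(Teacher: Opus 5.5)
Your argument is correct and uses the same mechanism as the paper: the Schwarz lemma on the slice $\lambda\mapsto g(\lambda u_n)$ bounds $|g|$ at an inner point of the radial segment ending at $x_n$. A mean-value/integral estimate on the final piece of that segment, which the Lipschitz bound keeps in the region where $g$ is close to $1$, then forces the derivative to be large there.

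The paper does this directly, with the shrinking scale $\varepsilon_n=\sqrt{\eta_n}+1/n$ and the real mean value theorem applied to $\mathrm{Re}\,g(tu_n)$, whereas you argue by contradiction on a fixed sub-segment $[a,r_n]$. One small point: you should take $c\ge 0$, which costs nothing, so that $r_n>a$ and $c(r_n-a)\le c(1-a)$ hold.
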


\begin{proof}
    Since $1 \in \overline{g(B_X)}$, let $(x_n) \subset B_X$ such that $g(x_n) \underset{n \rightarrow +\infty}{\longrightarrow} 1$ and write $x_n=r_nu_n$ where $r_n=\|x_n\|$ and $u_n=\frac{x_n}{\|x_n\|} \in S_X$. Applying Schwarz Lemma yields $|g(x_n)| \leq \|x_n\|=r_n$, therefore $r_n \underset{n \rightarrow +\infty}{\longrightarrow} 1$. 

    Consider now the real-valued differentiable function:

    $$\varphi_n : t \in [0,r_n] \longmapsto \mathrm{Re}(g(t(u_n)).$$
    Define $\eta_n=r_n-\mathrm{Re}(g(x_n))$. We have that $\eta_n \geq 0$, thus one can define $\varepsilon_n=\sqrt{\eta_n}+\frac{1}{n}$. Then:
    $$\eta_n \underset{n \rightarrow +\infty}{\longrightarrow} 0, \quad \varepsilon_n \underset{n \rightarrow +\infty}{\longrightarrow} 0 \text{ and } \frac{\eta_n}{\varepsilon_n} \leq \frac{\eta_n}{\sqrt{\eta_n}} \underset{n \rightarrow +\infty}{\longrightarrow} 0.$$
    Again, by Schwarz's lemma:
    $$\varphi_n(r_n-\varepsilon_n) \leq |g((r_n-\varepsilon_n)u_n)| \leq |r_n-\varepsilon_n|$$
    Note that since $r_n \underset{n \rightarrow +\infty}{\longrightarrow} 1$ and $\varepsilon_n \underset{n \rightarrow +\infty}{\longrightarrow} 0$, for $n$ large enough, we have that $r_n-\varepsilon_n \geq 0$. In the sequel, we assume that $r_n-\varepsilon_n \geq 0$.
    
    By the mean value theorem, there exists $t_n \in ]r_n-\varepsilon_n,r_n[$ such that:
    $$\varphi_n'(t_n)=\frac{\varphi_n(r_n)-\varphi_n(r_n-\varepsilon_n)}{\varepsilon_n},$$
    hence, 
    $$\varphi_n'(t_n) \geq \frac{r_n-\eta_n-(r_n-\varepsilon_n)}{\varepsilon_n} \geq 1-\frac{\eta_n}{\varepsilon_n} \underset{n \rightarrow +\infty}{\longrightarrow} 1.$$
    To conclude, let $z_n=t_nu_n$. Since we have that $\varphi_n'(t_n)=\mathrm{Re}(dg(z_n)(u_n))$, we obtain:
    $$\|dg(z_n)\| \geq |dg(z_n)(u_n)| \geq 1-\frac{\eta_n}{\varepsilon_n}$$
    Thus, $\underset{n \rightarrow +\infty}{\liminf} \|dg(z_n)\| \geq 1$, and:
    $$|g(z_n)-g(x_n)| \leq \|g\|_L\|z_n-x_n\| \leq \|g\|_L(r_n-t_n)$$
    So $g(z_n) \underset{n \rightarrow +\infty}{\longrightarrow} 1$ and $(z_n)$ is the sequence in $B_X$ we were looking for.
\end{proof}

\begin{proposition}[Relations between various ``strict interior'' properties]\label{prop-images-relation}
	Let $X,Y$ be complex Banach spaces and let $f\in \HL_0(B_X,Y)$ satisfy $f(B_X)\subset B_Y$.
	Consider the following properties:
	\begin{enumerate}[(P1)]
		\item \label{p1} $\displaystyle \|f\|_\infty:=\sup_{x\in B_X}\|f(x)\|<1$;
        \item \label{p2} For all $y^*\in S_{Y^*},$ for all $(z_j)\subset B_X$ such that $y^*(f(z_j))\to 1$, then $\|y^*\circ df(z_j)\|\to 0$;
        \item \label{p3} $\overline{f(B_X)}\subset B_Y$ (closure in the norm topology of $Y$);
		\item \label{p4} $f(S_X)\cap S_Y=\varnothing$;        
		\item \label{p5} $\|f(x)\|<\|x\|$ for every $x\in \overline{B_X}\setminus\{0\}$.
	\end{enumerate}
	Then the following implications always hold:
    \[
    \textnormal{\ref{p1}}\
    \Longrightarrow \textnormal{\ref{p2}}\ 
    \Longrightarrow
    \textnormal{\ref{p3}}\ 
    \Longrightarrow
    \textnormal{\ref{p4}}\
    \Longleftrightarrow\ 
    \textnormal{\ref{p5}}
    \]
    If $\dim(X)<\infty,$ all these properties are equivalent, and if $\overline{f(B_X)}$ is norm compact in $Y$ (for instance if 
    $Y$	is finite-dimensional), then $\textnormal{\ref{p1}}\ \Longleftrightarrow\ \textnormal{\ref{p3}}$.\\
	In general, none of the converses of the above implications holds, even for linear maps.
\end{proposition}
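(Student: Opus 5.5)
We prove the implications one at a time, relying on the Schwarz lemma (in the form $\|f(x)\|\le\|x\|$, obtained by applying the scalar Schwarz lemma to $y^*\circ f$ restricted to complex lines) and on Lemma~\ref{contp4}. Since $f$ is Lipschitz, it extends uniquely to a Lipschitz map on $\overline{B_X}$. This extension is what gives meaning to $f(S_X)$ in \ref{p4} and \ref{p5}, and we use it throughout.

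\emph{The chain of implications.} For \ref{p1}$\Rightarrow$\ref{p2}: $|y^*(f(z))|\le\|f\|_\infty<1$ for every $y^*\in S_{Y^*}$, so no sequence with $y^*(f(z_j))\to1$ exists and \ref{p2} holds vacuously. For \ref{p2}$\Rightarrow$\ref{p3} we argue by contraposition. Take $y\in\overline{f(B_X)}$ with $\|y\|=1$ and choose $y^*\in S_{Y^*}$ with $y^*(y)=1$. Then $g=y^*\circ f\in\HL_0(B_X)$ satisfies $g(B_X)\subset\mathbb D$ and $1\in\overline{g(B_X)}$. Lemma~\ref{contp4} gives $(z_n)$ with $g(z_n)\to1$ and $\liminf\|y^*\circ df(z_n)\|\ge1$, since $dg=y^*\circ df$; this contradicts \ref{p2}. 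For \ref{p3}$\Rightarrow$\ref{p4}: by continuity of the extension, $f(S_X)\subset\overline{f(B_X)}\subset B_Y$. The implication \ref{p5}$\Rightarrow$\ref{p4} is immediate.

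For \ref{p4}$\Rightarrow$\ref{p5}, suppose $\|f(x)\|=\|x\|=r$ with $0<r<1$; the Schwarz inequality already gives $\le$, and $r=1$ is excluded by \ref{p4}. Put $u=x/r$, pick $y^*$ norming $f(x)$, and set $h(\lambda)=y^*(f(\lambda u))$. This function is holomorphic on $\mathbb D$, continuous on $\overline{\mathbb D}$, and satisfies $|h(\lambda)|\le|\lambda|$ with equality at the interior point $\lambda=r$. The equality case of the one-variable Schwarz lemma gives $h(\lambda)=c\lambda$ with $|c|=1$. By continuity, $\|f(u)\|\ge|h(1)|=1$, contradicting \ref{p4}.

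\emph{The equivalences in special cases.} If $\dim X<\infty$, it suffices to show \ref{p5}$\Rightarrow$\ref{p1}. The function $x\mapsto\|f(x)\|$ is continuous on the compact set $\overline{B_X}$, so it attains its maximum at some point $x_0$. Either $x_0=0$, in which case the maximum is $0$, or $\|f(x_0)\|<\|x_0\|\le1$. Hence $\|f\|_\infty<1$. If $\overline{f(B_X)}$ is norm compact and contained in $B_Y$, the norm attains a maximum $<1$ on it, which gives \ref{p3}$\Rightarrow$\ref{p1}.

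\emph{Counterexamples to the converses.} For \ref{p4}$\not\Rightarrow$\ref{p3}, a linear functional works. Take $f\colon\ell_1\to\C$, $f(x)=\sum a_nx_n$, where $0<a_n<1$ and $a_n\to1$. Then $|f(x)|<\|x\|_1$ for all $x\neq0$, so \ref{p5} holds. On the other hand $f(B_{\ell_1})=\mathbb D$, so $1\in\overline{f(B_X)}$ and \ref{p3} fails.

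For linear $f$ we have $df\equiv f$. Testing \ref{p2} on $y^*$ with $\|y^*\circ f\|$ close to $1$ then forces $\|f\|<1$, so \ref{p2} and \ref{p1} coincide for linear maps. The remaining two counterexamples (\ref{p2}$\not\Rightarrow$\ref{p1} and \ref{p3}$\not\Rightarrow$\ref{p2}) must therefore be nonlinear and infinite dimensional, and I expect them to be the main obstacle.
\begin{itemize}
\item[$\bullet$] For \ref{p3}$\not\Rightarrow$\ref{p2}, I would try a diagonal map $f\colon B_{c_0}\to\ell_2$ or $B_{\ell_\infty}\to\ell_2$ whose coordinates are $f_n(x)=c_n x_n^{k_n}$ with $k_n\to\infty$. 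The aim is to keep $\overline{f(B_X)}$ inside the open ball, because large values of $\|f(z)\|$ occur only along non-Cauchy directions. Meanwhile, for a suitable $y^*$, the derivative $y^*\circ df(z_j)$ should stay of size about $1$ while $y^*(f(z_j))\to1$.
\item[$\bullet$] For \ref{p2}$\not\Rightarrow$\ref{p1}, I would look for a symbol whose values approach the sphere only along directions where the relevant derivatives decay. One candidate is $f(x)=\sum_n \varepsilon_n\,x_n^{m_n}e_n$, with $\varepsilon_n\uparrow1$ and exponents $m_n$ chosen so that the Lipschitz constant stays bounded.
\end{itemize}
In both cases the tuning of $c_n$ (or $\varepsilon_n$) and of the exponents, together with the verification of the Lipschitz bound via $\sup_x\|df(x)\|$, is where the work lies.
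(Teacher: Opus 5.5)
Your proofs of the positive part are correct and essentially the paper's. This covers the chain (P1)$\Rightarrow$(P2)$\Rightarrow$(P3)$\Rightarrow$(P4)$\Leftrightarrow$(P5) via Lemma~\ref{contp4} and the strict Schwarz lemma, and the finite-dimensional and compact-closure equivalences. Your $\ell_1$ functional for (P4)$\not\Rightarrow$(P3) is also correct.

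The gap is in the other converses, and it begins with a false claim: for linear maps, (P2) and (P1) do \emph{not} coincide. For linear $f$ one has $\sup_{z\in B_X}|y^*(f(z))|=\|y^*\circ f\|\le 1$. So a sequence with $y^*(f(z_j))\to1$ exists only if $\|y^*\circ f\|=1$, and a functional with $\|y^*\circ f\|$ merely close to $1$ is never tested by (P2). For linear $f$, (P2) therefore says $\|f^*y^*\|<1$ for every $y^*\in S_{Y^*}$, while (P1) says $\|f^*\|<1$. These differ whenever $\|f\|=1$ and $f^*$ does not attain its norm. Your reasoning only works under an extra compactness assumption, cf.\ Remark~\ref{remp4p1}. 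The paper's counterexample is $f(x)=\big((1-\frac1{n+1})x_n\big)_n$ on $c_0$. Here $\|f\|_\infty=1$, yet $|y^*(f(z))|\le\sum_n|y^*_n|(1-\frac1{n+1})<1$ for all $y^*\in S_{\ell_1}$ and $z\in B_{c_0}$, so (P2) holds vacuously.

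Your conclusion that the last two counterexamples ``must be nonlinear'' therefore contradicts the statement you are proving, which says every converse fails even for linear maps. More to the point, you never produce these two counterexamples. The diagonal-power sketches are neither tuned nor verified, so the final assertion of the proposition remains unproved.

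The missing example for (P3)$\not\Rightarrow$(P2) is again linear: $f(x)=\big((1-\frac1{n+1})x_n\big)_n$ on $\ell_1$.
\begin{itemize}
\item[$\bullet$] Its diagonal entries lie in $[\frac12,1)$, so $f$ is bounded below and $\overline{f(B_{\ell_1})}=f(\overline{B_{\ell_1}})$. Since $\|f(x)\|_1<\|x\|_1\le1$ for $x\neq0$, (P3) holds.
\item[$\bullet$] Take $y^*=(1,1,\dots)\in S_{\ell_\infty}$ and $z_j=(1-\frac1{j+1})e_j$. Then $y^*(f(z_j))=(1-\frac1{j+1})^2\to1$ while $\|y^*\circ df(z_j)\|=\|y^*\circ f\|=1$, so (P2) fails.
\end{itemize}
Since (P1)$\Rightarrow$(P2)$\Rightarrow$(P3), these two examples together with yours also settle the composite converses, such as (P3)$\not\Rightarrow$(P1).
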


\begin{proof}
\noindent\emph{ $\textnormal{\ref{p1}}\implies\textnormal{\ref{p2}}$}
Since $\overline{f(B_X)}\subset r\,\overline{B_Y}$ for some $r\in(0,1),$
there is no sequence $(z_j)\subset B_X$ and no $y^*\in S_{Y^*}$ such that $y^*(f(z_j))\to 1.$

\smallskip 
\noindent\emph{ $\textnormal{\ref{p2}}\implies\textnormal{\ref{p3}}$} Assume that $\overline{f(B_X )} \cap S_Y \neq \emptyset$. Let $y \in \overline{f(B_X)} \cap S_Y$. There exists $(x_n) \subset B_X$ such that $f(x_n) \underset{n \rightarrow +\infty}{\longrightarrow} y$. Using Hahn-Banach theorem, consider $y^* \in Y^*$ such that $\langle y^*,y\rangle=1$. Define $g=y^* \circ f \in HL_0(B_X,\mathbb{C})$. Clearly, we have that $g(B_X) \subset \mathbb{D}$, and $1 \in \overline{g(B_X)}$ since $g(x_n) \underset{n \rightarrow +\infty}{\longrightarrow} 1$. By Lemma \ref{contp4}, one has the existence of a sequence $(z_n) \subset B_X$ such that $g(z_n) \underset{n \rightarrow +\infty}{\longrightarrow} 1$ and $\underset{n \rightarrow +\infty}{\liminf} \|dg(z_n)\| \geq 1$. Recalling that $g=y^* \circ f$, it contradicts $\textnormal{\ref{p2}}$. 

\smallskip

\noindent\emph{ $\textnormal{\ref{p3}}\implies\textnormal{\ref{p4}}$}.
    Clearly \ref{p3} implies
	$f(S_X)\subset B_Y$, and therefore $f(S_X) \cap S_Y=\emptyset$, i.e.\ \ref{p4}.
	\smallskip
    
	\noindent\emph{ $\textnormal{\ref{p4}}\implies\textnormal{\ref{p5}}$ (strict Schwarz lemma).}
	Fix $x\in \overline{B_X}\setminus\{0\}$ and set $u:=\frac{1}{\|x\|}x\in S_X$, $r:=\|x\|\in(0,1]$.
	Choose $y^*\in S_{Y^*}$ such that $|y^*(f(x))|=\|f(x)\|$.
	Consider the holomorphic function
	\[
	\psi(\xi):=y^*(f(\xi u)),\qquad \xi\in\D.
	\]
	Then $\psi(0)=0$ and $|\psi(\xi)|\le |\xi|$ for $\xi\in\D$ by Schwarz' lemma, hence
	\[
	\|f(x)\|=|\psi(r)|\le r=\|x\|.
	\]
	Assume that equality holds. Then Schwarz' lemma forces the existence of a $\theta \in \mathbb{R}$ such that $\psi(\xi)=e^{i\theta}\xi$, and in particular
	$|\psi(1)|=1$. This implies $\|f(u)\|\ge |y^*(f(u))|=|\psi(1)|=1$, hence $\|f(u)\|=1$, contradicting \ref{p4}.
	Therefore $\|f(x)\|<\|x\|$, proving \ref{p5}.

    \smallskip
	\noindent\emph{ $\textnormal{\ref{p5}}\implies\textnormal{\ref{p4}}$.}
	If $x\in S_X$, then \ref{p5} gives $\|f(x)\|<\|x\|=1$, hence $f(S_X)\cap S_Y=\varnothing$.
	
	\smallskip
	\noindent\emph{$\textnormal{\ref{p3}}\iff\textnormal{\ref{p1}}$ whenever $\overline{f(B_X)}$ is compact.}
	We already proved \ref{p1}$\implies$\ref{p3}. Conversely, assume that $\overline{f(B_X)}$ is norm compact in $Y$
	and satisfies \ref{p3}. Since the norm is continuous, it attains its maximum on the compact set $\overline{f(B_X)}$:
	\[
	M:=\max_{y\in \overline{f(B_X)}}\|y\|<1,
	\]
	because $\overline{f(B_X)}\subset B_Y$ and $B_Y$ is open. Hence $\sup\limits_{x\in B_X}\|f(x)\|\le M<1$, that is \ref{p1}.

    \smallskip
    \noindent\emph{$\textnormal{\ref{p1}}
    \iff\textnormal{\ref{p4}}$ whenever  $\dim(X)<\infty$.} This follows easily from the compactness of $\overline{B_X}.$

	\smallskip
	\noindent\emph{Failure of converses in general.}
	We give a simple linear example showing that
	\textnormal{\ref{p3}} does not imply \textnormal{\ref{p1}} 
    for infinite-dimensional Banach spaces.
	Let $X=Y=\ell_2$ and define the diagonal operator $T\colon \ell_2\to \ell_2$ by
	\[
	T(x_1,x_2,x_3,\dots)=\Big(\tfrac12 x_1,\ \tfrac23 x_2,\ \tfrac34 x_3,\dots,\tfrac{n}{n+1}x_n,\dots\Big).
	\]
	Then $\|T\|=\sup_{n}\frac{n}{n+1}=1$, so \textnormal{\ref{p1}} fails.
	However, for every $x\neq 0$,
	\[
	\|Tx\|_2^2=\sum_{n\ge 1}\Big(\tfrac{n}{n+1}\Big)^2|x_n|^2<\sum_{n\ge 1}|x_n|^2=\|x\|_2^2,
	\]
	so \textnormal{\ref{p5}} (hence \textnormal{\ref{p4}}) holds. Moreover, since $\inf_n \frac{n}{n+1}=\frac12>0$,
	the operator $T$ is an isomorphism onto its range; in particular, $T(\overline{B_{\ell_2}})$ is closed. As it is also
	contained in $B_{\ell_2}$, we get $\overline{T(B_{\ell_2})}=T(\overline{B_{\ell_2}})\subset B_{\ell_2}$, so
	\textnormal{\ref{p3}} holds as well. This shows that \textnormal{\ref{p3}} does not imply
	\textnormal{\ref{p1}} in general.

    We now show that \textnormal{\ref{p4}} does not imply \textnormal{\ref{p3}}. Indeed, consider:
    $$\fonction{f}{\overline{B_{\ell_1}}}{\mathbb{C}}{(x_n)}{\sum\limits_{n=1}^{+\infty}\left(1-\frac{1}{n}\right)x_n.}$$
Since $f$ is linear, $f \in HL_0(B_{\ell_1},\mathbb{C})$. If $(e_n)$ denote the canonical basis of $\ell_1$, we have that 
$$
f\Big(\big(1-\frac 1 n\big)e_n\Big)=\Big(1-\frac{1}{n}\Big)^2 \underset{n \to \infty}{\longrightarrow} 1 \text{ so } 1 \in \overline{f(B_{\ell_1})}.
$$
In addition, let $x=(x_n) \in S_{\ell_1}$. Then:
$$|f(x_n)|=\left|\sum_{n=1}^{+\infty} \left(1-\frac{1}{n}\right)x_n\right| \leq \sum_{n=1}^{+\infty} \left(1-\frac{1}{n}\right)|x_n|<\|x\|_1=1.$$
Hence, $f(S_{\ell_1}) \cap S_{\mathbb{C}}=\emptyset$ but $\overline{f(B_{\ell_1})} \cap S_{\mathbb{C}}\neq \emptyset$. This shows that $f$ satisfies \textnormal{\ref{p4}} but not \textnormal{\ref{p3}}. The same example shows that \textnormal{\ref{p4}} does not imply \textnormal{\ref{p2}}. Indeed, if we take $z_j=\left(1-\frac 1j\right)e_j$ for all $j\geq 1,$ then $f(z_j)\to 1$ and 
$df(z_j)$ does not converge to $0.$

A similar example shows that \textnormal{\ref{p2}} does not imply \ref{p1}. 
Indeed, consider
    $$\fonction{f}{\overline{B_{c_0}}}{c_0}{(x_n)}{\left(\left(1-\frac{1}{n+1}\right)x_n\right).}$$
Then $f\in HL_0(B_{c_0},c_0)$ with $f(B_{c_0})\subset B_{c_0}$ and $\|f\|_\infty=1.$
For all $y^*\in S_{\ell_1}$  and all $z\in B_{c_0}$, 
$$|y^*(f(z))|\leq \sum_{n\geq 1}|y^*_n|\left(1-\frac 1{n+1}\right) <1.$$
Therefore, there does not exist a sequence $(z_j)\subset B_{c_0}$ such that $y^*(f(z_j))\to 1$
and $f$ satisfies \ref{p2}.

Finally, let us prove that \textnormal{\ref{p3}} does not imply \textnormal{\ref{p2}}. Indeed, consider 
  $$\fonction{f}{\overline{B_{\ell_1}}}{\ell_1}{(x_n)_n}{\left(\left(1-\frac{1}{n+1}\right)x_n\right)_{n}.}$$
As above, $f$ satisfies \textnormal{\ref{p3}}. It does not satisfy \textnormal{\ref{p2}}. 
Indeed, take $y^*=(1,\dots)$ and $z_j=\left(1-\frac1{j+1}\right)e_j$ so that 
\[y^*\circ f(z_j)=\left(1-\frac 1{j+1}\right)^2\to 1
\quad \text{ and } \quad
\left\|y^*\circ df(z_j)\right\|\to 1.\]
\end{proof}

\begin{remark}\label{remp4p1}
If we assume that $\overline{f(B_X)}$ is compact and $f$ is linear, then $\textnormal{\ref{p2}}\implies\textnormal{\ref{p1}}$. Indeed, assume that $\|f\|_\infty=1.$ By compactness of $\overline{f(B_X)}$, there exists $y\in S_Y$ and $(x_j)\subset B_X$ such that $f(x_j)\to y.$ Let $y^*\in S_{X^*}$ such that $y^*(y)=1$. Then $y^*(f(x_j))\to 1$. Since $f$ is linear, for all $j,$ we know that $y^*(f(x_j))=y^*(df(x_j)(x_j))$. Hence \ref{p2} is not satisfied.
\end{remark}

\subsection{Molecules and compactness}

We denote by
$$
\mathcal M:=\left\{\frac{\delta(x)-\delta(y)}{\|x-y\|} : x,y\in B_X,\ x\neq y\right\}
$$
the set of \emph{elementary molecules} in $\G_0(B_X)$. Molecules play a central role in the geometry of
Lipschitz-free spaces. For instance, it is shown in \cite{ACp54} that in the free space $\mathcal F(M)$ the set of
molecules (and, more generally, the set of finitely supported elements with uniformly bounded support size) enjoys the
Schur property: weak convergence of sequences implies norm convergence on that set. We emphasize that the situation is
drastically different in $\G_0(B_X)$.

\begin{example}\label{ex:weak-not-norm-molecule}
	Let $1<p<\infty$ and let $(t_n)$ be a decreasing sequence in $(0,1]$ such that $t_n\to 0$. Denote by $(e_n)$ the
	canonical basis of $\ell_p$. For each $n\in\N$, set
	$$
	v_n:=\frac{\delta(t_n e_n)-\delta(0)}{t_n}\in \G_0(B_{\ell_p}).
	$$
	Then $(v_n)$ does not converge to $0$ in norm, but $v_n \overset{w}{\longrightarrow} 0$ in $\G_0(B_{\ell_p})$.
	
	\smallskip
	Indeed, since $\|\delta(u)-\delta(0)\|=\|u\|_p$ in $\G_0(B_{\ell_p})$, we have $\|v_n\|=1$ for all $n$, so there is no
	norm convergence to $0$. On the other hand, fix $f\in \HL_0(B_{\ell_p})$. Using the first-order Taylor expansion at $0$,
	$$
	f(t_n e_n)=df(0)(t_n e_n)+r(t_n e_n),
	\qquad\text{with}\qquad
	\frac{r(t_n e_n)}{t_n}\xrightarrow[n\to\infty]{}0.
	$$
	Therefore
	$$
	\langle f,v_n\rangle=\frac{f(t_n e_n)-f(0)}{t_n}
	=\langle df(0),e_n\rangle+\frac{r(t_n e_n)}{t_n}.
	$$
	Since $df(0)\in(\ell_p)^*=\ell_q$ and $e_n\to 0$ weakly in $\ell_p$, we have $\langle df(0),e_n\rangle\to 0$, and
	the remainder term also tends to $0$. Hence $\langle f,v_n\rangle\to 0$ for every $f\in \HL_0(B_{\ell_p})$, that is,
	$v_n\overset{w}{\longrightarrow} 0$ in $\G_0(B_{\ell_p})$.
\end{example}

Despite this pathology, (weak) compactness of operators $\widehat f$ can still be characterized in terms of the image of
elementary molecules, as a straightforward adaptation of the corresponding result for free linearizations of Lipschitz
maps.

\begin{prop}\label{mol}
Let $f \in \HL_0(B_X,Y)$ such that $f(B_X) \subset B_Y$. Then, the operator $\widehat{f} : \G_0(B_X) \longrightarrow \G_0(B_Y)$ is (weakly) compact if and only if the set: 
$$\widehat{f}(\mathcal{M})=\left\{\frac{\delta(f(x))-\delta(f(y))}{\|x-y\|} : x \neq y \in B_X\right\}$$
is relatively (weakly) compact.
\end{prop}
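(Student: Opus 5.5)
The plan is to show that the closed unit ball of $\G_0(B_X)$ is the closed absolutely convex hull of $\mathcal M$, and then to combine this with the standard behaviour of (weak) compactness under closed absolutely convex hulls (Mazur's theorem in the norm case, the Krein--\v{S}mulian theorem in the weak case).

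\textbf{Necessity.} If $\widehat f$ is (weakly) compact, then $\widehat f(B_{\G_0(B_X)})$ is relatively (weakly) compact. Each elementary molecule has norm at most $1$: indeed $\delta$ is an isometric embedding, so $\|\delta(x)-\delta(y)\|=\|x-y\|$. Hence $\mathcal M\subset \overline{B}_{\G_0(B_X)}$, and $\widehat f(\mathcal M)$ is relatively (weakly) compact, being a subset of such a set. The identity $\widehat f\bigl(\frac{\delta(x)-\delta(y)}{\|x-y\|}\bigr)=\frac{\delta(f(x))-\delta(f(y))}{\|x-y\|}$ follows from $\widehat f\circ\delta=\delta\circ f$ and linearity.

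\textbf{Key step: $\overline{B}_{\G_0(B_X)}=\overline{\mathrm{aco}}(\mathcal M)$.} Set $K:=\overline{\mathrm{aco}}(\mathcal M)$. The inclusion $K\subset \overline{B}_{\G_0(B_X)}$ is clear. If it were strict, pick $\mu$ in the ball with $\mu\notin K$. Hahn--Banach separation in $\G_0(B_X)$ gives a functional $\varphi\in\G_0(B_X)^*$ with
\[
\mathrm{Re}\,\varphi(\mu)>\sup_{m\in K}\mathrm{Re}\,\varphi(m)=\sup_{m\in\mathcal M}|\varphi(m)|,
\]
where the equality uses that $K$ is balanced. Define $g(x):=\varphi(\delta(x)-\delta(0))$. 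Then $g(0)=0$, $g$ is holomorphic because $\delta$ is holomorphic and $\varphi$ is linear, and
\[
\|g\|_L=\sup_{m\in\mathcal M}|\varphi(m)|.
\]
So $g\in\HL_0(B_X)$.

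Since $\delta(0)=0$ as a functional on $\HL_0(B_X)$, $\varphi$ and $g$ agree on $\linsp\,\delta(B_X)$, hence on all of $\G_0(B_X)$ by density. This gives $|\varphi(\mu)|=|\langle g,\mu\rangle|\le\|g\|_L\|\mu\|\le\sup_{m\in\mathcal M}|\varphi(m)|$, a contradiction.

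The point needing care is that elements of $\G_0(B_X)^*$ genuinely act as $\HL_0(B_X)$ functions. This is the identification $\G_0(B_X)^*\cong\HL_0(B_X)$ from \cite{aron}, but the computation above verifies it directly, so nothing beyond holomorphy of $\delta$ is needed.

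\textbf{Sufficiency.} By linearity and continuity, $\widehat f(\overline{B}_{\G_0(B_X)})=\widehat f(K)\subset\overline{\mathrm{aco}}\bigl(\widehat f(\mathcal M)\bigr)$.
\begin{itemize}
\item In the norm case, the closed absolutely convex hull of a relatively compact set in a Banach space is compact (Mazur). Hence $\widehat f$ is compact.
\item In the weak case, the closed absolutely convex hull of a relatively weakly compact set is weakly compact (Krein--\v{S}mulian). Hence $\widehat f$ is weakly compact.
\end{itemize}
I expect no real obstacle beyond the hull identity. The argument is the same as in the Lipschitz-free setting.
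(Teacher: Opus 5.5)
Your proposal is correct and follows the paper's route. The paper's proof simply invokes the identity that the closed unit ball of $\G_0(B_X)$ is the closed absolutely convex hull of $\mathcal M$ (quoted from \cite[Proposition 2.8]{aron}), and then runs the Mazur/Krein--\v{S}mulian argument of \cite[Proposition 2.1]{ACp51}, exactly as you do; the only difference is that you prove the hull identity yourself by Hahn--Banach separation and the holomorphy of $\delta$, which is a valid, self-contained substitute for that citation.
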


\begin{proof}
The proof is identical to that of Proposition 2.1 of \cite{ACp51}, since $\overline{B_{\G_0(B_X)}}$ is the closed absolute convex hull of $\mathcal{M}$ (see \cite[Proposition 2.8]{aron}).
\end{proof} 

The following two lemmas will be used repeatedly in the sequel. 

\begin{lemma}\label{Bayart}
Let $X$ be a complex Banach space. Let $(a_n)$ and $(b_n)$ be sequences in $B_X$ converging in norm to the same point $a\in B_X$, and let $(d_n)$ be a sequence of positive numbers with $d_n \to 0$. Set
$$
w_n:=\frac{a_n-b_n}{d_n}\in X
\qquad\text{and}\qquad
m_n:=\frac{\delta(a_n)-\delta(b_n)}{d_n}\in \mathcal G_0(B_X).
$$
If $w_n \overset{w}{\longrightarrow} w$ in $X$, then $m_n \overset{w}{\longrightarrow} e_{a,w}$ in $\mathcal G_0(B_X)$, where
$$
e_{a,w}(f)=df(a)(w)\qquad (f\in \HL_0(B_X)).
$$
Moreover, if $w_n \longrightarrow w$ in norm, then $m_n \longrightarrow e_{a,w}$ in norm in $\mathcal G_0(B_X)$.
\end{lemma}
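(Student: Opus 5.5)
We first check that $e_{a,w}$ belongs to $\G_0(B_X)$ and not only to $\HL_0(B_X)^*$. Linearity in $f$ is clear, and $|df(a)(w)|\le \|f\|_L\|w\|$ since $\|f\|_L=\sup_x\|df(x)\|$. Membership in $\G_0(B_X)$ comes from the norm statement applied to $a_n=a+t_nw$, $b_n=a$, $d_n=t_n\to0$. Here $w_n=w$ exactly, so the norm part of the lemma, proved independently below, shows $e_{a,w}$ is a norm limit of elements of $\G_0(B_X)$. We therefore prove the norm estimate first.

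\emph{Key estimate.} For $f\in \HL_0(B_X)$ with $\|f\|_L\le 1$ we want
\[
\Big|\frac{f(a_n)-f(b_n)}{d_n}-df(a)(w_n)\Big|\le \varepsilon_n\|w_n\|,
\]
with $\varepsilon_n\to0$ independent of $f$. Fix $r$ with $\|a\|<r<1$ and a radius $\rho>0$ such that the ball $a+2\rho B_X\subset rB_X$. For $n$ large, $a_n,b_n\in a+\rho B_X$. Write
\[
f(a_n)-f(b_n)=\int_0^1 df(b_n+t(a_n-b_n))(a_n-b_n)\,dt,
\]
so it suffices to bound $\|df(x)-df(a)\|$ uniformly over $\|f\|_L\le1$ and $\|x-a\|\le\rho_n\to0$. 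This follows from the Cauchy estimates. The map $x\mapsto df(x)$ is holomorphic with $\|df\|\le 1$ on $B_X$. For $h\in S_X$, the function $\zeta\mapsto df(a+\zeta h)$ is bounded by $1$ on $|\zeta|<2\rho$. Hence $\|df(a+\zeta h)-df(a)\|\le |\zeta|/\rho$ for $|\zeta|\le\rho$, by Schwarz's lemma applied to each scalar function $\zeta\mapsto (df(a+\zeta h)-df(a))(k)$ after rescaling (they are bounded by $2$). Thus
\[
\|df(x)-df(a)\|\le \frac{\|x-a\|}{\rho},
\]
uniformly in $f$. This yields
\[
\|m_n-\langle\cdot\rangle_{a,w_n}\|\le \frac{\max(\|a_n-a\|,\|b_n-a\|)}{\rho}\,\|w_n\|,
\]
where $\langle f\rangle_{a,w_n}:=df(a)(w_n)=e_{a,w_n}(f)$.

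\emph{Conclusions.} Since $w\mapsto e_{a,w}$ is a bounded linear map $X\to \HL_0(B_X)^*$ of norm at most $1$, we have $\|e_{a,w_n}-e_{a,w}\|\le\|w_n-w\|$. So if $w_n\to w$ in norm, then $m_n\to e_{a,w}$ in norm, because $\|w_n\|$ is bounded and the error term above tends to $0$. In particular, applying this with $w_n=w$ places $e_{a,w}$ in $\G_0(B_X)$, and $w\mapsto e_{a,w}$ maps $X$ into $\G_0(B_X)$ boundedly and linearly. In the weak case, $(w_n)$ is bounded by the uniform boundedness principle, so $m_n-e_{a,w_n}\to0$ in norm. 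Moreover, $e_{a,w_n}\to e_{a,w}$ weakly, since a bounded linear operator $X\to\G_0(B_X)$ is weak-to-weak continuous. Hence $m_n\to e_{a,w}$ weakly.

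The main obstacle is the uniform (in $f$) modulus of continuity of $df$ near $a$. We handle it by the vector-valued Schwarz/Cauchy estimate; one should note that $df$ is holomorphic as an $\mathcal L(X,\mathbb C)$-valued map, which is standard in \cite{muj}. Alternatively, one can work directly with the scalar function $\zeta\mapsto f(a+\zeta h)$ and the Cauchy formula for its second derivative.
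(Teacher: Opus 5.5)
Your proof is correct, but it is organised differently from the paper's.

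The paper treats the two assertions separately:
\begin{itemize}
\item For the weak part it fixes $h\in\HL_0(B_X)$ and writes $\langle h,m_n\rangle=\int_0^1 dh(\gamma_n(t))(w_n)\,dt$. It then uses only the continuity of $dh$ at $a$ (not uniform in $h$), together with $dh(a)(w_n-w)\to0$.
\item For the norm part it introduces the compact set $K=\overline{\conv}(\{a_n\}\cup\{b_n\}\cup\{a\})\subset B_X$. It bounds $\|d^2h\|$ uniformly on $K$ by Cauchy estimates and runs a second-order Taylor expansion at $b_n$.
\end{itemize}

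You instead prove a single uniform first-order estimate,
\[
\|m_n-e_{a,w_n}\|\le\rho^{-1}\max(\|a_n-a\|,\|b_n-a\|)\,\|w_n\|.
\]
It comes from a Schwarz-lemma Lipschitz bound for $df$ on a ball around $a$, with no second derivatives and no compact convex hull. You deduce both statements from it. The norm statement follows from $\|e_{a,w_n}-e_{a,w}\|\le\|w_n-w\|$. The weak statement follows because $m_n$ is a norm-null perturbation of $e_{a,w_n}$, and $w\mapsto e_{a,w}$ is a bounded operator $X\to\G_0(B_X)$, hence weak-to-weak continuous.

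What your route buys:
\begin{itemize}
\item An explicit rate of convergence.
\item A unified treatment in which the weak statement is visibly a consequence of the norm estimate.
\item An explicit check that $e_{a,w}\in\G_0(B_X)$, which the paper leaves implicit. Its weak argument only shows $\langle h,m_n\rangle\to\langle h,e_{a,w}\rangle$ for $h\in\HL_0(B_X)$, i.e.\ weak$^*$ convergence in $\HL_0(B_X)^*$. This is weak convergence in $\G_0(B_X)$ only once one knows the limit lies there, which, as you observe, follows from the norm part with $w_n=w$.
\end{itemize}

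What it costs is uniform control of $df$ near $a$, via holomorphy of $x\mapsto df(x)$ as an $X^*$-valued map. This is comparable to the paper's Cauchy estimates for $d^2h$. The paper's weak argument, by contrast, is entirely soft.

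One small caveat concerns your closing alternative: the scalar function $\zeta\mapsto f(a+\zeta h)$ only controls $d^2f(x)[h,h]$ directly. You would need polarization, using symmetry of $d^2f(x)$, to bound $\|d^2f(x)\|$. Your main argument does not rely on this.
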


\begin{proof}
Fix $h\in \HL_0(B_X)$. Recall that $h$ is Fr\'echet differentiable on $B_X$ and that the map
$z\mapsto dh(z)\in X^*$ is continuous. Moreover, $\|dh(z)\|\le \|h\|_L$ for all $z\in B_X$.

\medskip
\noindent\emph{Weak convergence.}
For each $n$, consider the segment $\gamma_n\colon[0,1]\to B_X$ given by
$\gamma_n(t)=b_n+t(a_n-b_n)$. By the (Fr\'echet) fundamental theorem of calculus along segments,
$$
h(a_n)-h(b_n)=\int_0^1 dh(\gamma_n(t))(a_n-b_n)\,dt.
$$
Dividing by $d_n$ and using $a_n-b_n=d_n w_n$, we get
$$
\langle m_n,h\rangle=\frac{h(a_n)-h(b_n)}{d_n}=\int_0^1 dh(\gamma_n(t))(w_n)\,dt.
$$
Hence
\begin{align*}
\big|\langle h, m_n-e_{a,w}\rangle\big|
&=\left|\int_0^1 \big(dh(\gamma_n(t))(w_n)-dh(a)(w)\big)\,dt\right|\\
&\le \int_0^1 \big|(dh(\gamma_n(t))-dh(a))(w_n)\big|\,dt \;+\; \big|dh(a)(w_n-w)\big|.
\end{align*}
Since $w_n\to w$ weakly in $X$, the second term tends to $0$.
Moreover, $\gamma_n(t)\to a$ uniformly in $t\in[0,1]$ because $a_n\to a$ and $b_n\to a$ in norm, hence
$$
\sup_{t\in[0,1]}\|dh(\gamma_n(t))-dh(a)\|_{X^*}\xrightarrow[n\to\infty]{}0.
$$
Finally, $(w_n)$ is bounded (because it is weakly convergent), so the first term also tends to $0.
$ This proves that $m_n \overset{w}{\longrightarrow} e_{a,w}$ in $\G_0(B_X)$.

\medskip
\noindent\emph{Norm convergence.}
Assume now that $w_n\to w$ in norm.
Let
$$K:=\overline{\conv}\big({\{a_n:n\in\N\}\cup\{b_n:n\in\N\}}\cup\{a\}\big).$$

Then $K$ is compact and $K\subset B_X$.
Hence there exists $r>0$ such that
\begin{equation*}
K+rB_X\subset B_X.
\end{equation*}
Fix $h\in \HL_0(B_X)$ with $\|h\|_L\le 1$. Since $h(0)=0$ and $h$ is $1$-Lipschitz on $B_X$, we have
$\sup\limits_{z\in B_X}|h(z)|\le 1$.
By a standard Cauchy estimate on the balls $z+rB_X\subset B_X$ (for $z\in K$), there exists a constant
$M>0$ (depending only on $r$) such that
\begin{equation}\label{eq:bayart-D2}
\sup_{z\in K}\|d^2h(z)\|\le M \qquad\text{for all }h\in \HL_0(B_X)\text{ with }\|h\|_L\le 1.
\end{equation}
In particular, $dh$ is $M$-Lipschitz on $K$, i.e.
\begin{equation}\label{eq:bayart-dh-lip}
\|dh(u)-dh(v)\|\le M\|u-v\|\qquad (u,v\in K,\ \|h\|_L\le 1).
\end{equation}

Write the second order Taylor formula with integral remainder at $b_n$:
$$
h(a_n)-h(b_n)=dh(b_n)(a_n-b_n)+\int_0^1(1-t)\,d^2h\big(b_n+t(a_n-b_n)\big)[a_n-b_n,a_n-b_n]\,dt.
$$
Dividing by $d_n$ and using $a_n-b_n=d_n w_n$ yields
$$
\langle m_n,h\rangle
=dh(b_n)(w_n)+R_n(h),
$$
where
$$
R_n(h):=\frac{1}{d_n}\int_0^1(1-t)\,d^2h\big(b_n+t(a_n-b_n)\big)[d_n w_n,d_n w_n]\,dt.
$$
Since $b_n+t(a_n-b_n)\in K$ for all $t\in[0,1]$, \eqref{eq:bayart-D2} gives
$$
|R_n(h)|\le \frac12\,M\,d_n\,\|w_n\|^2.
$$
As $(w_n)$ is bounded and $d_n\to 0$, we get
$$
\sup_{\|h\|_L\le 1}|R_n(h)|\xrightarrow[n\to\infty]{}0.
$$
Moreover, by \eqref{eq:bayart-dh-lip} and since $b_n\to a$,
$$
\sup_{\|h\|_L\le 1}|(dh(b_n)-dh(a))(w_n)|\le M\|b_n-a\|\,\|w_n\|\xrightarrow[n\to\infty]{}0,
$$
and also
$$
\sup_{\|h\|_L\le 1}|dh(a)(w_n-w)|\le \|w_n-w\|\xrightarrow[n\to\infty]{}0.
$$
Combining these estimates shows that
$$
\sup_{\|h\|_L\le 1}\big|\langle m_n,h\rangle-dh(a)(w)\big|\xrightarrow[n\to\infty]{}0,
$$
that is, $\|m_n-e_{a,w}\|_{\G_0(B_X)}\to 0$.
\end{proof}

\subsection{Lipschitz image and compactness}
    
	Let us recall the next notation. 
    
\begin{definition}\label{lipimg}
		For $f \in \HL_0(B_X,Y)$, we define the set: 
	$$L_f:=\left\{\frac{f(x)-f(y)}{\|x-y\|} : x \neq y \in B_X\right\}.$$
\end{definition}

The set $L_f$ is sometimes called the Lipschitz image of $f$.

\begin{lemma} \label{lemma:Lf-to-fBX}
	Let $f \in \HL_0(B_X,Y)$. If $L_f$ is relatively (weakly) compact then $f(B_X)$ is relatively (weakly) compact.
\end{lemma}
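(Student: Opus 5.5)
The idea is to write every point of $f(B_X)$ as a scalar multiple of an element of $L_f$, taking the base point $y=0$. Since $f(0)=0$, for every $x\in B_X\setminus\{0\}$ we have
$$
f(x)=\|x\|\cdot\frac{f(x)-f(0)}{\|x-0\|}\in [0,1]\cdot L_f ,
$$
and $f(0)=0$ also lies in $[0,1]\cdot L_f$. Hence
$$
f(B_X)\subset \{\lambda u:\ \lambda\in[0,1],\ u\in L_f\} .
$$
It therefore suffices to show that the set $S:=\{\lambda u : \lambda\in[0,1],\ u\in L_f\}$ is relatively (weakly) compact whenever $L_f$ is.

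\emph{Norm case.} Let $K:=\overline{L_f}$, which is norm compact by assumption. The map $\Phi\colon[0,1]\times Y\to Y$, $(\lambda,u)\mapsto\lambda u$, is jointly continuous. So $\Phi([0,1]\times K)$ is norm compact, and it contains $S$, hence contains $f(B_X)$.

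\emph{Weak case.} Let $K:=\overline{L_f}^{\,w}$, which is weakly compact by assumption. The map $\Phi$ is still jointly continuous from $[0,1]\times(Y,w)$ to $(Y,w)$ on bounded sets. Indeed, for $y^*\in Y^*$,
$$
\lambda_\alpha y^*(u_\alpha)-\lambda y^*(u)=(\lambda_\alpha-\lambda)\,y^*(u_\alpha)+\lambda\,\big(y^*(u_\alpha)-y^*(u)\big),
$$
and the first term tends to $0$ because $K$ is bounded ($\|u\|\le\|f\|_L$ on $L_f$, hence on its weak closure). Thus $\Phi([0,1]\times K)$ is weakly compact and contains $f(B_X)$.

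An alternative for the weak case is sequential, via Eberlein--\v{S}mulian. Given $(f(x_n))$, write $f(x_n)=\lambda_n u_n$ with $\lambda_n=\|x_n\|$ and $u_n\in L_f$ (or $u_n$ arbitrary in $L_f$ if $x_n=0$). Extract a subsequence with $\lambda_n\to\lambda$ and $u_n\rightharpoonup u$; then $\lambda_n u_n\rightharpoonup\lambda u$.

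The only point needing care is the joint continuity of scalar multiplication for the weak topology. This holds here because $L_f$ is bounded; everything else is immediate.
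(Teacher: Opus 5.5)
Your proof is correct and follows the paper's argument: the paper also uses $f(x)=\|x\|\cdot\frac{f(x)-f(0)}{\|x\|}$ to place $f(B_X)$ inside $\bigcup_{\lambda\in[0,1]}\lambda K$. It then shows this set is (weakly) compact as the image of $[0,1]\times K$ under $(\lambda,z)\mapsto\lambda z$, with weak continuity coming from $(\lambda,z)\mapsto\lambda\,y^*(z)$ being a product of continuous scalar functions, so the boundedness of $L_f$ you invoke is true but not actually needed.
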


\begin{proof}
	\noindent\textit{Norm compactness.}
	Taking $y=0$, we observe that
	$$\left\{\frac{f(x)}{\|x\|} : x \in B_X, x \neq 0 \right\} \subset L_f.$$
	From this we deduce that 
	$$\overline{f(B_X)} \subset \displaystyle\bigcup_{\lambda \in [0;1]} \lambda \overline{L_f}.$$ Indeed, pick $y \in \overline{f(B_X))}, y \neq 0$. There exists a sequence $(x_n) \subset B_X$ such that $f(x_n) \longrightarrow y$. Extracting if necessary, we may assume that $\|x_n\|\longrightarrow \lambda >0$ and we get $\lambda \frac{f(x_n)}{\|x_n\|} \longrightarrow y$. We conclude because $\bigcup\limits_{\lambda \in [0;1]} \lambda \overline{L_f}$ is clearly compact.
	
	\medskip
		\noindent\textit{Weak compactness.} 
		Assume that $L_f$ is relatively weakly compact in $Y$. Let $K:=\overline{L_f}^{w}$
		be the weak closure of $L_f$; then $K$ is weakly compact. Similarly as above, we have
		\begin{equation}\label{eq:image-in-union}
			f(B_X)\subset \bigcup_{\lambda\in[0,1]}\lambda K.
		\end{equation}
		We claim that $\bigcup\limits_{\lambda\in[0,1]}\lambda K$ is weakly compact.
		Consider the map
		\[
		\Phi\colon [0,1]\times K\longrightarrow Y,\qquad \Phi(\lambda,z):=\lambda z,
		\]
		where $[0,1]$ carries its usual topology and $K$ carries the weak topology of $Y$.
		For each $y^*\in Y^*$, the scalar function
		\[
		(\lambda,z)\longmapsto y^*(\Phi(\lambda,z))=y^*(\lambda z)=\lambda\,y^*(z)
		\]
		is continuous on $[0,1]\times K$ (product of continuous functions, since $z\mapsto y^*(z)$ is weakly continuous).
		Hence $\Phi$ is continuous as a map into $(Y,w)$.
		Since $[0,1]\times K$ is compact and $\Phi$ is continuous, its image
		\[
		\Phi([0,1]\times K)=\bigcup_{\lambda\in[0,1]}\lambda K
		\]
		is weakly compact. Together with \eqref{eq:image-in-union}, this shows that $f(B_X)$ is contained in a weakly compact set,
		hence $f(B_X)$ is relatively weakly compact.
\end{proof}

\section{When the range is an \texorpdfstring{$\ell_p$}{lp}-space}

We now turn to a more specific class of target spaces, which allows for a thiner analysis in infinite dimensions. More precisely, we will study holomorphic--Lipschitz maps $f\colon B_X\to \ell_p$, where $1\le p<\infty$ such that $f(B_X) \subset B_{\ell_p}$. Our starting point is Proposition~\ref{mol}, which reduces the problem to understanding the images of sequences of
normalized molecules, that is, sequences $(m_n)$ of the form
$$
m_n=\frac{\delta\big(f(x_n)\big)-\delta\big(f(y_n)\big)}{\|x_n-y_n\|},
\qquad x_n\neq y_n\in B_X.
$$

\subsection{Preparatory lemmas}

We will distinguish several regimes and use gliding-hump techniques. The next three lemmas follow this approach. The first one provides a convenient way to construct linear functionals
with pairwise disjoint supports.

\begin{lemma}\label{bg}
Let $1 \leq p<\infty$ and let $q$ be the conjugate exponent. Assume that $(x_n)\subset B_{\ell^{p}}$ satisfies
$x_n \to x$ weakly in $\ell_p$ (weakly$^*$ if $p=1$), and that $(x_n)$ is not relatively norm-compact.
Then there exist $\varepsilon>0$, a strictly increasing sequence $(n_k)$ in $\mathbb N$,
pairwise disjoint finite intervals $(I_k)$ of $\mathbb N$, and a sequence
$(x_k^*)\subset S_{\ell^{q}}$ such that $\supp(x_k^*)\subset I_k$ and
$$
\begin{cases}
\langle x_k^*,x_{n_k}\rangle>\varepsilon,\\[2pt]
\bigl|\langle x_k^*,x_{n_j}\rangle\bigr|<\dfrac{\varepsilon}{2^{k+2}}
\quad \text{for } j\neq k.
\end{cases}
$$
\end{lemma}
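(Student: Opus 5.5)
The plan is to subtract the weak limit and run a gliding hump on the differences $u_n:=x_n-x$. These converge weakly (weak$^*$ if $p=1$) to $0$ in $\ell_p$, and $(u_n)$ is still not relatively norm-compact, since a translate of a relatively compact set is relatively compact. Passing to a subsequence, we can therefore assume $\|u_n\|_p\ge 3\eta$ for some $\eta>0$ and all $n$. Otherwise some subsequence of $(x_n)$ would converge in norm to $x$. To handle all subsequences we argue as follows: if every subsequence had a further norm-Cauchy subsequence, the set would be relatively compact. So we take a subsequence with no norm-convergent further subsequence; its weak limit is still $x$, so $\|u_n\|\not\to 0$ along it, and we extract the $3\eta$ bound.

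Weak (or weak$^*$) convergence to $0$ in $\ell_p$ gives coordinatewise convergence to $0$. The vector $x$ itself also has small tails.

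The construction is inductive. Suppose $n_1<\dots<n_{k-1}$ and intervals $I_1<\dots<I_{k-1}$ have been chosen, with $\max I_{k-1}=N_{k-1}$.
\begin{enumerate}
\item Choose $n_k>n_{k-1}$ so large that $\|P_{[1,N_{k-1}]}u_{n_k}\|_p$ is tiny (coordinatewise convergence on finitely many coordinates), so that most of the mass of $u_{n_k}$ lies beyond $N_{k-1}$.
\item Choose $N_k$ so large that $\|P_{(N_k,\infty)}u_{n_k}\|_p$ and $\|P_{(N_k,\infty)}x\|_p$ are tiny.
\item Set $I_k=(N_{k-1},N_k]$. Then $\|P_{I_k}u_{n_k}\|_p\ge 2\eta$.
\item Choose $y_k^*\in S_{\ell_q}$ supported on $I_k$ norming $P_{I_k}u_{n_k}$.
\end{enumerate}
The aim is for the functional to see $x_{n_k}$ mostly via $u_{n_k}$. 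Indeed $\langle y_k^*,x_{n_k}\rangle=\langle y_k^*,u_{n_k}\rangle+\langle y_k^*,x\rangle$, and $|\langle y_k^*,x\rangle|\le\|P_{(N_{k-1},\infty)}x\|_p$. This is small once $N_{k-1}$ is large, which we can ensure from the start by taking $N_0$ so that the tail of $x$ beyond $N_0$ is below $\eta/2$. Hence $\mathrm{Re}\langle y_k^*,x_{n_k}\rangle\ge 3\eta/2$ after multiplying $y_k^*$ by a unimodular scalar, and we set $\varepsilon=\eta$. Taking real parts, one more rotation makes the pairing real and positive. The rotation has to be chosen on the full pairing $\langle y_k^*,x_{n_k}\rangle$, which has modulus at least $2\eta-\eta/2$. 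We set $x_k^*$ to be this rotated functional.

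The cross terms are the main obstacle. We need $|\langle x_k^*,x_{n_j}\rangle|<\varepsilon/2^{k+2}$ for all $j\neq k$, where the bound depends on $k$ (the functional index) and must hold for both $j<k$ and $j>k$.
\begin{itemize}
\item For $j<k$, we require when choosing $N_{k-1}$ that the tails of $u_{n_j}$ for all $j<k$ beyond $N_{k-1}$ are below $\eta/2^{k+3}$, a condition on finitely many vectors. We also require the tail of $x$ beyond $N_{k-1}$ to be below $\eta/2^{k+3}$, which is why $x$ is not subtracted here.
\item For $j>k$, $x_k^*$ is already fixed with finite support $I_k$. When choosing $n_j$ we require $|\langle x_i^*,u_{n_j}\rangle|<\eta/2^{i+3}$ for all $i<j$, which is possible by weak(-star) convergence $u_n\to0$ against the finitely many finitely supported $x_i^*$. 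We then also need $|\langle x_i^*,x\rangle|$ to be small for $i\ge1$, and this follows from $\supp x_i^*\subset(N_{i-1},N_i]$ together with the tail bound on $x$ beyond $N_{i-1}$ from the previous item.
\end{itemize}
The total error is then below $\eta/2^{k+2}$. All the smallness requirements are finitely many at each step, so the induction goes through, and the only care needed is in the bookkeeping order (tail of $x$, then $n_k$, then $N_k$).
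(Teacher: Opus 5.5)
Your proof is correct and follows essentially the same route as the paper: subtract the weak limit, extract a subsequence with $\|x_n-x\|_p\ge 3\eta$, run a gliding hump to obtain disjoint finite blocks carrying most of the mass, norm those blocks, and control the contribution of $x$ and the cross terms through tails and coordinatewise convergence. The differences are minor: you carry out the gliding-hump induction by hand instead of citing the Bessaga--Pe{\l}czy\'nski selection principle. Your unimodular rotation also makes $\langle x_k^*,x_{n_k}\rangle$ genuinely real and positive, a point the paper's displayed inequality passes over, since $\langle x_k^*,x\rangle$ may be complex.
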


We only prove the case $1<p<\infty$, the proof being verbatim the same for $p=1$ and the weak$^*$ topology. 

\begin{proof}
Set $y_n:=x_n-x$. Then $y_n\overset{w}{\longrightarrow} 0$, and $(y_n)$ is not relatively norm-compact; in particular,
it is not norm-null. Hence, after passing to a subsequence, there exists $\varepsilon>0$ such that
\begin{equation}\label{eq:lower}
\|y_n\|_p\ge 4\varepsilon \qquad(n\in\mathbb N).
\end{equation}
For a finite $I\subset\mathbb N$, let $P_I:\ell_p\to\ell_p$ denote the coordinate projection onto $I$.
By the classical Bessaga--Pe{\l}czy\'nski gliding hump selection principle (see, e.g.,
\cite[Proposition~1.3.10]{AK}), passing to a further subsequence we may find pairwise disjoint
finite intervals $(I_k)$ and indices $(n_k)$ such that
\begin{equation}\label{eq:humps}
\|P_{I_k}y_{n_k}-y_{n_k}\|_p\le \varepsilon 2^{-k-4},\qquad
\|P_{I_k}y_{n_j}\|_p\le \varepsilon 2^{-k-4}\ \ (j\neq k),
\end{equation}
and (enlarging the starting point of $I_k$ if necessary) also
\begin{equation}\label{eq:x-tail}
\|P_{I_k}x\|_p\le \varepsilon 2^{-k-4}\qquad(k\in\mathbb N).
\end{equation}
Set $z_k:=P_{I_k}y_{n_k}$. From \eqref{eq:lower} and \eqref{eq:humps} we get
$\|z_k\|_p\ge 4\varepsilon-\varepsilon/16>3\varepsilon$.

For each $k$, choose $x_k^*\in S_{\ell_q}$ supported on $I_k$ such that
$\langle x_k^*,z_k\rangle=\|z_k\|_p$ (for instance, define $x_k^*=0$ on $\mathbb N\setminus I_k$ and on $I_k$ set
$x_k^*(m)=\operatorname{sgn}(z_k(m))\,|z_k(m)|^{p-1}/\|z_k\|_p^{p-1}$).
Then, using $x_{n_j}=y_{n_j}+x$ and \eqref{eq:humps}--\eqref{eq:x-tail}, we obtain
$$
\langle x_k^*,x_{n_k}\rangle
=\langle x_k^*,y_{n_k}\rangle+\langle x_k^*,x\rangle
\ge \langle x_k^*,z_k\rangle-\|y_{n_k}-z_k\|_p-\|P_{I_k}x\|_p
>3\varepsilon-2\cdot \varepsilon 2^{-k-4}>\varepsilon,
$$
and for $j\neq k$,
$$
\bigl|\langle x_k^*,x_{n_j}\rangle\bigr|
\le \|P_{I_k}y_{n_j}\|_p+\|P_{I_k}x\|_p
\le 2\cdot \varepsilon 2^{-k-4}
=\varepsilon 2^{-k-3}
<\frac{\varepsilon}{2^{k+2}}.
$$
\end{proof}

The next lemma provides convenient holomorphic--Lipschitz test functions on $B_{\ell_p}$, with controlled Lipschitz norm, adapted to disjoint supports.

\begin{lemma}\label{Fs}
Let $1 \leq p<\infty$ and let $q$ be the conjugate exponent. Let $s=(s_n)_{n \in \mathbb{N}} \in \ell_{\infty}$ and $(x_n^*) \subset S_{\ell_q}$ with disjoint supports. Let $m \in \mathbb{N}$ such that $(m-1)q \geq p$ ($m=1$ if $p=1$). Let:
$$F_s : x \in B_{\ell_p} \longmapsto \frac{1}{m}\sum_{n=1}^{+\infty} s_n(\langle x_n^*,x\rangle)^m \in \C.$$
Then, $F_s \in \HL_0(B_{\ell_p})$ with $\|F_s\|_L \leq \|s\|_{\infty}$. Moreover, if $\phi \in S_{\ell_q}$, the functional:
$$G_s : x \in B_{\ell_p} \longmapsto \frac{1}{m}\sum_{n=1}^{+\infty} s_n\langle \phi,x\rangle(\langle x_n^*,x\rangle)^m \in \C$$
is also in $\HL_0(B_{\ell_p})$ with $\|G_s\|_L \leq 2\|s\|_{\infty}$.
\end{lemma}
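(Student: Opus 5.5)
The plan is to show that $F_s$ is holomorphic on $B_{\ell_p}$, vanishes at $0$, and has $\|dF_s(x)\|\le \|s\|_\infty$ for every $x\in B_{\ell_p}$. The identity $\|F\|_L=\sup_{x\in B_X}\|dF(x)\|$ recalled from \cite{aron} then gives the Lipschitz bound. The case $p=1$, $m=1$ is immediate. There $F_s(x)=\langle \sum_n s_n x_n^*,x\rangle$, and since the $x_n^*$ have disjoint supports and norm one in $\ell_\infty$, the functional $\sum_n s_nx_n^*$ lies in $\ell_\infty$ with norm at most $\|s\|_\infty$. So $F_s$ is linear with the required bound.

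For $1<p<\infty$, I will first establish convergence and holomorphy. Since the $x_n^*$ have disjoint supports in $\ell_q$, Hölder's inequality applied blockwise gives, for every $x\in \ell_p$,
\[
\sum_n |\langle x_n^*,x\rangle|^p\le \sum_n \|x_n^*\|_q^p\,\|P_{\supp x_n^*}x\|_p^p\le \|x\|_p^p .
\]
Because $m\ge p$ when $m\ge 2$ (this follows from $(m-1)q\ge p$ with $q>1$) and $|\langle x_n^*,x\rangle|\le 1$ on the ball, the series $\sum_n s_n\langle x_n^*,x\rangle^m$ converges uniformly on $B_{\ell_p}$. It is a uniform limit of polynomials, hence holomorphic, and clearly $F_s(0)=0$. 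Termwise differentiation (again justified by locally uniform convergence) gives
\[
dF_s(x)(h)=\sum_n s_n\langle x_n^*,x\rangle^{m-1}\langle x_n^*,h\rangle .
\]
I will bound this by Hölder's inequality in the index $n$ with exponents $(q,p)$:
\[
|dF_s(x)(h)|\le \|s\|_\infty\Big(\sum_n|\langle x_n^*,x\rangle|^{(m-1)q}\Big)^{1/q}\Big(\sum_n|\langle x_n^*,h\rangle|^p\Big)^{1/p}.
\]
The second factor is at most $\|h\|_p$ by the disjointness estimate above. Since $(m-1)q\ge p$ and $|\langle x_n^*,x\rangle|\le1$, the first factor is at most $\big(\sum_n|\langle x_n^*,x\rangle|^p\big)^{1/q}\le1$. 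This yields $\|dF_s(x)\|\le\|s\|_\infty$, and the hypothesis on $m$ is used exactly at this point.

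For $G_s$, I will write $G_s=\phi\cdot F_s$, where $\phi$ is regarded as a linear functional. Then $dG_s(x)(h)=\phi(h)F_s(x)+\phi(x)\,dF_s(x)(h)$. On the ball, $|\phi(x)|\le1$, and $|F_s(x)|\le \|F_s\|_L\|x\|\le\|s\|_\infty$ because $F_s(0)=0$. Therefore $\|dG_s(x)\|\le 2\|s\|_\infty$. Holomorphy and $G_s(0)=0$ are clear, so the same identity from \cite{aron} concludes the proof.

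The main obstacle is the derivative estimate for $F_s$. It requires choosing the right Hölder pairing in the summation index, together with the disjoint-support inequality $\sum_n|\langle x_n^*,x\rangle|^p\le\|x\|_p^p$. Everything else is routine.
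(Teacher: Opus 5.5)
Your derivative estimate for $F_s$ is correct and is essentially the paper's. The paper computes $\|dF_s(x)\|_q^q=\sum_n|s_n|^q|\langle x_n^*,x\rangle|^{(m-1)q}$ directly from the disjointness of the $x_n^*$ in $\ell_q$, which is the dual form of your H\"older-in-the-index bound. Your product-rule bound for $G_s=\phi\cdot F_s$ and your separate treatment of $p=1$ are also correct, and tidier than the paper's computation.

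The gap is in how you justify holomorphy and the termwise-differentiated formula. The series does \emph{not} converge uniformly on $B_{\ell_p}$, nor uniformly on any ball. Your estimate $\sum_n|\langle x_n^*,x\rangle|^m\le\|x\|_p^p\le 1$ bounds the sum, but it gives no uniform control of the tails. For instance, take $x_n^*=e_n$ and $s_n=1$. The tail $\frac1m\sum_{n>N}x_n^m$ equals $t^m/m$ at $x=te_{N+1}$. Near any fixed $x_0$, it tends to $\eta^m/m$ at $x_0+\eta e_{N+1}$ as $N\to\infty$. So neither ``uniform limit of polynomials'' nor ``termwise differentiation justified by locally uniform convergence'' is available, and both conclusions are left unsupported as written.

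The repair is short. Write $F_s(x)=A(x,\dots,x)$ for the symmetric $m$-linear form $A(u_1,\dots,u_m)=\frac1m\sum_n s_n\prod_{j=1}^m\langle x_n^*,u_j\rangle$. Since $m\ge p$, the inclusion $\ell_p\subset\ell_m$ is contractive. Together with the generalized H\"older inequality and your disjointness estimate, this gives $|A(u_1,\dots,u_m)|\le\frac{\|s\|_\infty}{m}\prod_{j=1}^m\|(\langle x_n^*,u_j\rangle)_n\|_{\ell_m}\le\frac{\|s\|_\infty}{m}\prod_{j=1}^m\|u_j\|_p$. Hence $F_s$ is a continuous $m$-homogeneous polynomial, so it is holomorphic on all of $\ell_p$, and $dF_s(x)(h)=mA(x,\dots,x,h)$, which is exactly your formula. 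Then $G_s$ is a continuous $(m+1)$-homogeneous polynomial. This is what the paper means by ``clearly holomorphic as an $m$-homogeneous polynomial''. Alternatively, $F_s$ is locally bounded and the series converges locally uniformly along each complex line, so $F_s$ is G\^ateaux holomorphic and hence holomorphic. With this substitution, the rest of your argument goes through unchanged.
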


\begin{proof}
Because the $x_n^*$ have disjoint supports, $F_s$ is well defined. Then, $F_s$ is clearly holomorphic as an $m$-homogeneous polynomial, also, $F_s(0)=0$. It remains to show that $F_s$ is Lipschitz. To that end, we compute its differential, and use the fact that $\|F_s\|_L=\|dF_s\|_{\infty}$.

Let $x \in B_{\ell_p}$ and $h \in \ell_p$. 
$$dF_s(x)(h)=\sum_{n=1}^{+\infty} s_n(\langle x_n^*,x\rangle)^{m-1}\langle x_n^*,h\rangle.$$
The linear form $dF_s(x)$ identifies itself to an element of $\ell_q$. Let us compute its norm. Since the $x_n^*$ have disjoint support:
$$\|dF_s(x)\|_q^q=\sum_{n=1}^{+\infty} \|s_n(\langle x_n^*,x\rangle)^{m-1}x_n^*\|_q^q=\sum_{n=1}^{+\infty} |s_n|^q|\langle x_n^*,x\rangle|^{(m-1)q}.$$
Because $(m-1)q \geq p$ and $|\langle x_n^*,x\rangle|<1$:
$$\|dF_s(x)\|_q^q \leq \|s\|_{\infty}^q \sum_{n=1}^{+\infty} |\langle x_n^*,x\rangle|^p \leq\|s\|_{\infty}^q\sum_{n=1}^{+\infty} \|P_{\mathrm{supp}(x_n^*)}(x)\|_p^p \leq \|s\|_{\infty}\|x\|^p_p.$$
Hence, $\|dF_s\|_{\infty} \leq \|s\|_{\infty}$, so $F_s \in \HL_0(B_{\ell_p})$ and $\|F_s\|_L \leq \|s\|_{\infty}$.

For the second part, $G_s$ is clearly holomorphic and $G_s(0)=0$. Let us again compute the differential of $G_s$. For $x \in B_{\ell_p}$ and $h \in \ell_p$:
$$dG_s(x)(h)=\frac{1}{m}\sum_{n=1}^{+\infty} s_n\phi(h)\langle x_n^*,x\rangle^m+\sum_{n=1}^{+\infty}s_n\langle \phi,x\rangle\langle x_n^*,x\rangle^{m-1}x_n^*(h).$$
Since the $x_n^*$ have disjoint supports:
$$\|dG_s(x)\|_q^q \leq \frac{1}{m}\left\|\sum_{n=1}^{+\infty} s_n\langle x_n^*,x\rangle\phi\right\|_q^q+\sum_{n=1}^{+\infty} \|s_n\langle \phi,x\rangle\langle x_n^*,x\rangle^{m-1}x_n^*\|_q^q.$$
Let us get an upper bound for the second term as in the first part:
\begin{align*}\|dG_s(x)\|_q^q  &\leq \frac{1}{m}\left| \sum_{n=1}^{+\infty} s_n\langle x_n^*,x\rangle\right|^q+\|s\|_{\infty}^q\|x\|^p_p\\ &\leq \left(\sum_{n=1}^{+\infty} | s_n \langle x_n^*,P_{\mathrm{supp}(x_n^*)}(x)\rangle|\right)^q+\|s\|_{\infty}^q\|x\|_p^p\\
 &\leq \frac{1}{m}\|s\|_{\infty}^q\left(\sum_{n=1}^{+\infty} \|P_{\mathrm{supp}(x_n^*)}(x)\|_p\right)^q+\|s\|^q_{\infty}\|x\|_p^p\\
 &\leq \frac{1}{m}\|s\|_{\infty}^q\|x\|_p^q+\|s\|_{\infty}^q\|x\|_p^p \leq \left(1+\frac{1}{m}\right)\|s\|_{\infty}^q.
\end{align*}
Then, $G_s \in \HL_0(B_{\ell_p})$ and we obtain indeed $\|dG_s\|_{\infty}=\|G_s\|_L \leq 2\|s\|_{\infty}$.

\end{proof}

The next lemma provides a uniform separation estimate for powers on the unit disk: if two points are at least $\rho$ apart,
then one of finitely many powers separates them by a definite amount.

\begin{lemma}\label{anacomp}
Let $\rho\in ]0,2]$ and $M\geq 2$. There exists $C(\rho,M)>0$ such that, for every $\alpha,\beta\in\overline{\mathbb D}$ satisfying $|\alpha-\beta|\geq\rho$, there exists $m\in \{M,M+1\}$ such that $|\alpha^m-\beta^m|\geq C(\rho,M)$.
\end{lemma}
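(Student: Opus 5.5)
Since the statement concerns two points of the compact set $\overline{\mathbb D}$ and only two exponents, the natural route is a compactness argument. Set
$$K_\rho:=\{(\alpha,\beta)\in\overline{\mathbb D}^2:\ |\alpha-\beta|\ge\rho\},$$
which is compact, and define the continuous function
$$\Phi(\alpha,\beta):=\max\bigl(|\alpha^M-\beta^M|,\ |\alpha^{M+1}-\beta^{M+1}|\bigr)$$
on $K_\rho$. If $\Phi$ never vanishes on $K_\rho$, then it attains a positive minimum there, and we may take $C(\rho,M)$ to be this minimum. Indeed, for each pair $(\alpha,\beta)\in K_\rho$, the exponent $m\in\{M,M+1\}$ realising the maximum then satisfies $|\alpha^m-\beta^m|\ge C(\rho,M)$. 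So the whole proof reduces to showing that $\Phi>0$ on $K_\rho$.

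To prove this, suppose $\alpha^M=\beta^M$ and $\alpha^{M+1}=\beta^{M+1}$ with $(\alpha,\beta)\in K_\rho$.
\begin{itemize}
\item If $\beta=0$, then $\alpha^M=0$, so $\alpha=0=\beta$. This contradicts $|\alpha-\beta|\ge\rho>0$.
\item If $\beta\neq0$, then $\beta^M\neq0$, and dividing the second identity by the first gives $\alpha=\beta$, again a contradiction.
\end{itemize}
Hence $\Phi>0$ on $K_\rho$, and continuity together with compactness yields the constant.

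The only point needing care is that two consecutive exponents are used, because they have coprime "periods". A single power would not suffice: for instance $\alpha=1$, $\beta=e^{2i\pi/M}$ have $\alpha^M=\beta^M$ although $|\alpha-\beta|$ is bounded below. The consecutive pair rules out such coincidences, as the argument above shows. Nothing else is an obstacle. If an explicit constant were wanted (it is not required by the statement), one could bound $|\alpha^{M+1}-\beta^{M+1}|$ from below using the factorisation $\alpha^{M+1}-\beta^{M+1}=\alpha(\alpha^M-\beta^M)+\beta^M(\alpha-\beta)$. This identity gives either $|\alpha^M-\beta^M|\ge c$ or $|\alpha^{M+1}-\beta^{M+1}|\ge|\beta|^M\rho-c$, which handles the case where $|\beta|$ is bounded away from $0$. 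The case where both $|\alpha|$ and $|\beta|$ are small is excluded, since then $|\alpha-\beta|<\rho$. The compactness argument, however, is cleaner, and it is the one I would write.
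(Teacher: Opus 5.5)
Your proof is correct and is essentially the paper's argument. The paper argues by contradiction with sequences $(\alpha_n,\beta_n)$ and passes to convergent subsequences, which is the sequential form of your minimum of a continuous function on the compact set $K_\rho$; both then deduce $\alpha=\beta$ from $\alpha^M=\beta^M$ and $\alpha^{M+1}=\beta^{M+1}$. Your separate treatment of $\beta=0$ is a small improvement, since the paper divides by $\beta$ without checking that $\beta\neq 0$.
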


\begin{proof}
Let us assume that the assertion does not hold. Then, for all $n \in \mathbb{N}$, there exist $(\alpha_n),(\beta_n)$ two sequences in $\overline{\mathbb{D}}$ such that $|\alpha_n-\beta_n| \geq \rho$ and for all $m \in \{M_1,M_1+1\}$, we have $|\alpha_n^m-\beta_n^m| \leq \frac{1}{n}$.

Since $\overline{\mathbb{D}}$ is compact, up to passing to a subsequence, we may assume that $\alpha_n \underset{n \to \infty}{\longrightarrow} \alpha \in \overline{\mathbb{D}}$ and $\beta_n \underset{n \to \infty}{\longrightarrow} \beta \in \overline{\mathbb{D}}$. Then:
$$|\alpha-\beta| \geq \rho \text{ and } \forall m \in \{M_1,M_1+1\}, \alpha^m=\beta^m.$$
Therefore
$$\left(\frac{\alpha}{\beta}\right)^{M_1}=\left(\frac{\alpha}{\beta}\right)^{M_1+1}=1.$$
So $\frac{\alpha}{\beta}=1$ and $\alpha=\beta$ which contradicts the fact that $|\alpha-\beta|\geq \rho$.
\end{proof}

\subsection{Case analysis}\label{subsec:case-analysis}

Let $f\in \HL_0(B_X,\ell_p)$ be such that $f(B_X)\subset B_{\ell_p}$.
Our goal is to understand whether, for such maps, weak compactness of the canonical linearization $\widehat f\colon \G_0(B_X)\longrightarrow \G_0(B_{\ell_p})$ forces compactness. This property holds in the Lipschitz--free spaces setting (see \cite{ACp51}),
and we aim to identify precisely what remains true (and what fails) when the range is an $\ell_p$-space.

Our approach is by contradiction. Assume that $\widehat f$ is not compact. By Proposition~\ref{mol}, the set
$$
\widehat f(\mathcal M)
=\left\{\frac{\delta(f(x))-\delta(f(y))}{\|x-y\|}: x,y\in B_X,\ x\neq y\right\}
\subset \G_0(B_{\ell_p})
$$
is not relatively compact. Hence there exist sequences $(x_n)$ and $(y_n)$ in $B_X$ with $x_n\neq y_n$ such that
$$
m_n:=\frac{\delta(f(x_n))-\delta(f(y_n))}{\|x_n-y_n\|}
$$
has no norm-convergent subsequence in $\G_0(B_{\ell_p})$.
To rule out weak compactness, it will suffice to extract from $(m_n)$ a subsequence equivalent to the canonical basis of
$\ell_1$. Indeed, a sequence equivalent to the $\ell_1$-basis cannot be relatively weakly compact.

\medskip
\noindent\textit{Upper estimate.}
Since $\|m_n\|\le \Lip(f)$ for every $n$, the triangle inequality yields
\begin{equation}\label{eq:upper-l1}
	\forall\,c_1,\dots,c_N\in\C,\qquad
	\left\|\sum_{i=1}^N c_i m_i\right\|
	\le \Lip(f)\sum_{i=1}^N |c_i|.
\end{equation}
Thus, to prove $\ell_1$-equivalence, it only remains to obtain a uniform lower $\ell_1$-estimate.

\medskip
\noindent\textit{Lower estimate via test functions.}
We will construct a test function $F\in \HL_0(B_{\ell_p})$ such that, for all $N$ and all scalars $c_1,\dots,c_N$,
\begin{equation}\label{eq:lower-l1-plan}
	\left\|\sum_{i=1}^N c_i m_i\right\|
	\ge \frac{1}{\|F\|_L}\left|\left\langle F,\sum_{i=1}^N c_i m_i\right\rangle\right|
	\ge C\sum_{i=1}^N |c_i|,
\end{equation}
where $C>0$ is independent of $N$ and the coefficients. The functions produced in Lemma~\ref{Fs} are tailored to
establish \eqref{eq:lower-l1-plan}.

\medskip
\noindent\textit{Notation.}
Set
$$
a_n:=f(x_n),\qquad b_n:=f(y_n),\qquad d_n:=\|x_n-y_n\|.
$$
Then $a_n,b_n\in B_{\ell_p}$, $d_n\in(0,2]$, and
$$
m_n=\frac{\delta(a_n)-\delta(b_n)}{d_n}.
$$
Passing to a subsequence, we may assume that
$$
d_n\longrightarrow d\ge 0.
$$

\subsubsection{The macro regime: $d>0$}\label{subsubsec:macro}

Assume first that $d>0$. Passing to a subsequence, we may assume that
$$\forall n\in\N,\quad \frac{d}{2}\le d_n \le 2d.$$
Observe that $(a_n)$ and $(b_n)$ cannot both have norm-convergent subsequences:
indeed, if $a_n\to a$ and $b_n\to b$ in norm, then
$$
m_n=\frac{\delta(a_n)-\delta(b_n)}{d_n}\longrightarrow \frac{\delta(a)-\delta(b)}{d}
\qquad\text{in }\G_0(B_{\ell_p}),
$$
contradicting our choice of $(m_n)$. Since $\ell_p$ is reflexive, we may nevertheless assume (after extraction) that
both $(a_n)$ and $(b_n)$ are weakly convergent. Up to passing to a further subsequence, we are reduced to the following situations (and their symmetric variants):
\begin{enumerate}[(I)]
	\item $(a_n)$ is not relatively norm compact, while $b_n\to b$ in norm for some $b\in B_{\ell_p}$;
	\item both $(a_n)$ and $(b_n)$ fail to be relatively norm compact. We further split into
	\begin{enumerate}[(a)]
		\item $(a_n-b_n)$ is not relatively norm compact;
		\item $a_n-b_n\longrightarrow w$ in norm for some $w\in \ell_p$.
	\end{enumerate}
\end{enumerate}

Note that case (II)-(b) forces $w \neq 0$; otherwise $\|m_n\| = d_n^{-1}\|a_n-b_n\|_p \to 0$ which is impossible ($(m_n)$ does not have any convergent subsequence). 
We now treat these cases one by one.

\begin{lemma}\label{lem:macro-case-I-lower}
	Assume \textnormal{(I)}: $(a_n)$ is not relatively norm compact and $b_n\to b$ in norm for some $b\in B_{\ell_p}$. Let $m\in \N$ be such that $(m-1)q \geq p$. Then there exists $s\in \ell_\infty$ with $\|s\|_\infty\le 1$ such that, up to passing to a subsequence, for all $N \in \N$ and all scalars
	$c_1,\dots,c_N\in\C$ we have
	$$
	\left\langle F_s, \sum_{n=1}^{N} c_n m_n\right\rangle \ge \frac{\varepsilon^m}{8dm}\sum_{n=1}^{N} |c_n|,
	$$
	where $F_s$ is the test function constructed in Lemma~\ref{Fs} (and $\varepsilon>0$ is the separation parameter arising
		from the non-compactness of $(a_n)$).
\end{lemma}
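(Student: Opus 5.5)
We apply Lemma~\ref{bg} to the sequence $(a_n)\subset B_{\ell_p}$. It is weakly convergent, say $a_n\to a$ weakly, and not relatively norm compact. Lemma~\ref{bg} then gives $\varepsilon>0$, a subsequence (which we relabel as $(a_n)$, together with the corresponding $b_n$, $d_n$, $m_n$), disjoint finite intervals $I_n$, and functionals $x_n^*\in S_{\ell_q}$ with $\supp x_n^*\subset I_n$. These satisfy $\langle x_n^*,a_n\rangle>\varepsilon$ and $|\langle x_n^*,a_j\rangle|<\varepsilon 2^{-n-2}$ for $j\neq n$. We may also arrange that $b$ is essentially invisible to the $x_n^*$. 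Since $b_n\to b$ in norm and $\|P_{I_n}b\|_p\to 0$ (the intervals $I_n$ move to infinity), after a further extraction we may assume $|\langle x_n^*,b_j\rangle|\le \|P_{I_n}b\|_p+\|b_j-b\|_p$ is as small as we like. More precisely, we choose the subsequence so that $\sup_j|\langle x_n^*,b_j\rangle|\le \varepsilon 2^{-n-2}$. This requires first choosing the tail $\|b_j-b\|$ small uniformly for the retained indices, and then choosing $I_n$ far out, which is compatible with the gliding-hump construction.

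For the test function, the coefficients $c_n$ are arbitrary complex numbers, so no single fixed $s$ can absorb their phases. The inequality should therefore be read as holding after the coefficients' phases are incorporated into $s$. So for given $c_1,\dots,c_N$ we take $s_n=\overline{\operatorname{sgn}(c_n)}\,\overline{\operatorname{sgn}(\langle x_n^*,a_n\rangle^m)}$ for $n\le N$ and $s_n=0$ otherwise. This gives $\|s\|_\infty\le 1$, and Lemma~\ref{Fs} yields $F_s\in\HL_0(B_{\ell_p})$ with $\|F_s\|_L\le 1$. Then
$$\Big\langle F_s,\sum_n c_nm_n\Big\rangle=\sum_n\frac{c_n}{d_n}\,\frac1m\sum_k s_k\big(\langle x_k^*,a_n\rangle^m-\langle x_k^*,b_n\rangle^m\big).$$
The diagonal terms $k=n$ contribute $\frac{|c_n|}{md_n}|\langle x_n^*,a_n\rangle|^m\ge \frac{|c_n|\varepsilon^m}{2dm}$, using $d_n\le 2d$. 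These are real and positive by the choice of $s$.

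It remains to show that the off-diagonal terms and all the $b$-terms are negligible. For fixed $n$, since every quantity lies in $\overline{\D}$ and $m\ge1$, we have $\sum_{k\neq n}|\langle x_k^*,a_n\rangle|^m\le\sum_k\varepsilon 2^{-k-2}\le \varepsilon/4$. Likewise $\sum_k|\langle x_k^*,b_n\rangle|^m\le\varepsilon/4$. This bound is not yet good enough, because we need to compare with $\varepsilon^m$ rather than $\varepsilon$. So we sharpen the smallness parameters in Lemma~\ref{bg} and in the choice for $b$ to $\varepsilon^m2^{-k-3}$; the proof of Lemma~\ref{bg} allows any prescribed summable smallness. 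With this, each error sum is at most $\varepsilon^m/8$. The total error in row $n$ is then at most $\frac{|c_n|}{md_n}\cdot\frac{\varepsilon^m}{4}\le\frac{|c_n|\varepsilon^m}{2dm}$, using $d_n\ge d/2$. This is too large against the diagonal bound, so we tighten once more to $\varepsilon^m2^{-k-5}$, which makes the error at most $\frac{|c_n|\varepsilon^m}{8dm}$. Taking real parts, the real part of the pairing is at least $\big(\frac{1}{2}-\frac{1}{8}\big)\frac{\varepsilon^m}{dm}\sum|c_n|\ge\frac{\varepsilon^m}{8dm}\sum|c_n|$. We read the pairing in the statement as this real part, or, since its modulus is at least the real part, the lower estimate \eqref{eq:lower-l1-plan} follows either way.

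The main obstacle is bookkeeping the smallness parameters: the off-diagonal terms must be controlled uniformly in $n$ and summably in $k$, at scale $\varepsilon^m$ rather than $\varepsilon$. The $b_n$ terms must be handled simultaneously, which requires interleaving the extraction for $\|b_j-b\|$ with the gliding-hump choice of the $I_k$.
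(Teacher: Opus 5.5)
Your proof is correct and follows the paper's argument: Lemma~\ref{bg} applied to $(a_n)$ with a sharpened summable control, the phases of the $c_n$ absorbed into $s$ (which, as you rightly note and as in the paper's own proof, depends on $N$ and the $c_n$), and a diagonal versus off-diagonal estimate using $d/2\le d_n\le 2d$. You are in fact more explicit than the paper about the $b$-terms, including the diagonal one $\langle x_n^*,b_n\rangle$; one small fix there: the bound $\|P_{I_n}b\|_p+\|b_j-b\|_p$ alone cannot be made $\le \varepsilon 2^{-n-2}$ uniformly in $j$ for every $n$, so use instead that $\{b_j\}\cup\{b\}$ is norm compact, hence has uniformly small tails, and take $\min I_n$ large enough that $\sup_j\|P_{I_n}b_j\|_p$ lies below the prescribed threshold.
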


\begin{proof}
	Since $(a_n)$ is weakly convergent but not relatively compact, we use Lemma~\ref{bg} to extract a subsequence of $(a_n)$, still denoted the same way for simplicity, and to find a sequence $(x_n^*) \subset S_{\ell_q}$ which verifies the consequences of the lemma for the sequence $(a_n)$. In Lemma \ref{bg}, we can construct the linear forms $(x_k^*)$ so that, for $k \neq n$, 
	$$
	|\langle x_k^*,a_n\rangle|^m \leq \frac{\varepsilon^m}{2^{k+4}}\quad \text{and} \quad  |\langle x_k^*,b_n\rangle|^m \leq \frac{\varepsilon^m}{2^{k+4}}.$$
	For $N \in \mathbb{N}, c_1,...,c_N \in \mathbb{C}$. Let, for all $n,k \in \mathbb{N}$,
	$$
	\Delta_{k,n}=\langle x_k^*,a_n\rangle^m-\langle x_k^*,b_n\rangle^m\quad \text{ and }\quad  s_k=\begin{cases}\frac{\overline{c_k}\overline{\Delta_{k,k}}}{|c_k||\Delta_{k,k}|} \text{ if } k \leq N\\
    0 \text{ if } k>N.
    \end{cases}
	$$ 
	We have that $s:=(s_n) \in \ell_{\infty}$, $\|s\|_{\infty}=1$ and for all $n \in \mathbb{N}$, $c_ns_n\Delta_{n,n}=|c_n||\Delta_{n,n}|$. Using the function $F_s$ defined in Lemma \ref{Fs}, and the triangle inequality, one has:
	\begin{align*}
		\left\langle F_s, \sum_{n=1}^{N} c_nm_n\right\rangle&=\sum_{n=1}^{N} c_n\langle F_s,m_n\rangle=\sum_{n=1}^{N}\frac{c_n}{md_n}\left(s_n\Delta_{n,n}+\sum_{k \neq n} s_k\Delta_{k,n}\right)\\    &=\sum_{n=1}^{N}\left(\frac{|c_n||\Delta_{n,n}|}{md_n}+\frac{c_n}{md_n}\sum_{k \neq n} s_k\Delta_{k,n}\right)\\ &\geq \sum_{n=1}^{N} |c_n|\left(\frac{\varepsilon^m}{2md_n}+\frac{1}{md_n}\sum_{k \neq n} -|\langle x_k^*,a_n\rangle|^m-|\langle x_k^*,b_n\rangle|^m\right).
	\end{align*}
	Since, for $k \neq n$, $|\langle x_k^*,a_n\rangle|^m \leq \frac{\varepsilon^m}{2^{k+4}}$ and $|\langle x_k^*,b_n\rangle|^m \leq \frac{\varepsilon^m}{2^{k+4}}$, we deduce
	$$\left\langle F_s, \sum_{n=1}^{N} c_nm_n\right\rangle\geq \sum_{n=1}^{N} |c_n|\left(\frac{1}{md_n}\frac{\varepsilon^m}{2} +\frac{1}{md_n}\left(-\frac{\varepsilon^m}{8}-\frac{\varepsilon^m}{8}\right)\right).$$
	Since $d_n \leq 2d$, we finally obtain
	$$\left\langle F_s, \sum_{n=1}^{N} c_nm_n\right\rangle \geq \frac{\varepsilon^m}{8dm}\sum_{n=1}^{N} |c_n|.$$
	So, in this case, we obtain the desired lower estimate with the constant $C=\frac{\varepsilon^m}{8m}$, recalling that $\|F_s\|_L \leq \|s\|_{\infty}=1$.
\end{proof}

\begin{lemma}\label{lem:macro-IIa}
	Assume \textnormal{(II)-(a)}: $(a_n)$ and $(b_n)$ are not $\|\cdot\|$-relatively compact and
	$(a_n-b_n)$ is not relatively $\|\cdot\|$-compact.
	Then, after passing to a subsequence, there exist $m\in\N$ with $(m-1)q\ge p$ and
	$s\in \ell_\infty$ with $\|s\|_\infty\le 1$ such that for all $N\in\N$ and all scalars
	$c_1,\dots,c_N\in\C$,
	$$
	\left\|\sum_{n=1}^{N} c_nm_n\right\|
	\ \ge\
	\frac{1}{\|F_s\|_L}\left|\left\langle F_s,\sum_{n=1}^{N} c_nm_n\right\rangle\right|
	\ \ge\
	\frac{1}{2dm}\left(C(\varepsilon)-\frac{\varepsilon^m}{2^{\ell}}\right)\sum_{n=1}^{N}|c_n|,
	$$
	where $\ell$ is chosen so that $C(\varepsilon)-\frac{\varepsilon^m}{2^\ell}>0$ and
	$C(\varepsilon)>0$ is the constant given by Lemma~\ref{anacomp}
	applied to $\rho=\varepsilon$.
\end{lemma}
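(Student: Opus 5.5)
We mimic the proof of Lemma~\ref{lem:macro-case-I-lower}, but now both $(a_n)$ and $(b_n)$ move, so a single power $m$ may fail to separate $\langle x_k^*,a_k\rangle$ from $\langle x_k^*,b_k\rangle$. This is where Lemma~\ref{anacomp} enters.

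\emph{Step 1: gliding hump on the differences.} We may assume $a_n\to a$ and $b_n\to b$ weakly. Then $(a_n-b_n)$ converges weakly to $a-b$ and is not relatively norm compact. Apply Lemma~\ref{bg} to the bounded sequence $\frac12(a_n-b_n)\subset B_{\ell_p}$. Passing to a subsequence, this gives $\varepsilon>0$ and disjointly supported $x_k^*\in S_{\ell_q}$, supported on intervals $I_k$, with
\[
|\langle x_k^*,a_k\rangle-\langle x_k^*,b_k\rangle|>2\varepsilon\ge\varepsilon .
\]
Now fix $M$ with $(M-1)q\ge p$ and $M\ge2$. Then $\alpha_k:=\langle x_k^*,a_k\rangle$ and $\beta_k:=\langle x_k^*,b_k\rangle$ lie in $\overline{\D}$ and satisfy $|\alpha_k-\beta_k|\ge\varepsilon$. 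By Lemma~\ref{anacomp} with $\rho=\varepsilon$, for each $k$ some $m_k\in\{M,M+1\}$ satisfies $|\alpha_k^{m_k}-\beta_k^{m_k}|\ge C(\varepsilon)$. One of the two values of $m_k$ occurs infinitely often, so after a further extraction we get a single $m$, still with $(m-1)q\ge p$, such that
\[
|\Delta_{k,k}|:=|\langle x_k^*,a_k\rangle^m-\langle x_k^*,b_k\rangle^m|\ge C(\varepsilon)\quad\text{for all }k.
\]

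\emph{Step 2: making the off-diagonal terms small.} For $k\ne n$ we need
\[
\sum_{k\neq n}\bigl(|\langle x_k^*,a_n\rangle|^m+|\langle x_k^*,b_n\rangle|^m\bigr)\le \frac{\varepsilon^m}{2^\ell},
\]
where $\ell$ is fixed beforehand so that $C(\varepsilon)-\varepsilon^m2^{-\ell}>0$. Lemma~\ref{bg} applied to $(a_n-b_n)$ alone does not control $a_n$ and $b_n$ separately on $I_k$. So I will run the Bessaga--Pe{\l}czy\'nski selection jointly on the two weakly convergent sequences $a_n-a$ and $b_n-b$, choosing the humps $I_k$ simultaneously. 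Then $\|P_{I_k}a_n\|_p$ and $\|P_{I_k}b_n\|_p$ are at most $\varepsilon2^{-k-\ell-4}$ for $n\neq k$. To arrange this I will push $\min I_k$ far out so that $\|P_{I_k}a\|_p$ and $\|P_{I_k}b\|_p$ are tiny, and pick $n_k$ late enough that $a_{n_j}-a$ and $b_{n_j}-b$ for $j<k$ are almost supported before $I_k$. Since $|\langle x_k^*,\cdot\rangle|\le1$ on $B_{\ell_p}$, raising to the power $m\ge1$ only decreases these quantities. This bookkeeping is the main obstacle: Lemma~\ref{bg} as stated only handles one sequence, so its proof has to be redone for the pair while keeping the diagonal separation from Step 1.

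\emph{Step 3: conclusion.} Given $N$ and $c_1,\dots,c_N$, define $s_k=\overline{c_k\Delta_{k,k}}/|c_k\Delta_{k,k}|$ for $k\le N$ (and $s_k=1$ if $c_k=0$), and $s_k=0$ for $k>N$. Take $F_s$ from Lemma~\ref{Fs}, so $\|F_s\|_L\le1$. As in the proof of Lemma~\ref{lem:macro-case-I-lower},
\[
\Bigl\langle F_s,\sum_n c_nm_n\Bigr\rangle=\sum_n\frac{c_n}{md_n}\Bigl(s_n\Delta_{n,n}+\sum_{k\ne n}s_k\Delta_{k,n}\Bigr)\ge\sum_n\frac{|c_n|}{md_n}\Bigl(C(\varepsilon)-\frac{\varepsilon^m}{2^\ell}\Bigr).
\]
Using $d_n\le2d$, this gives the stated bound with constant $\frac{1}{2dm}\bigl(C(\varepsilon)-\varepsilon^m2^{-\ell}\bigr)$. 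The first inequality in the statement is just duality, since $F_s\in\HL_0(B_{\ell_p})$ pairs with $\G_0(B_{\ell_p})$ with norm $\|F_s\|_L$.
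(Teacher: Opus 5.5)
Your proposal is correct and follows the paper's proof: Lemma~\ref{bg} applied to the differences, Lemma~\ref{anacomp} with a pigeonhole extraction to fix $m$, and the phase-adjusted test function $F_s$ with summable off-diagonal control. Your remark that Lemma~\ref{bg} applied to $(a_n-b_n)$ does not by itself control $\langle x_k^*,a_n\rangle$ and $\langle x_k^*,b_n\rangle$ separately is fair (the paper passes over this by ``prescribing a summable control sequence''), and your joint selection fixes it; you could also skip that selection, since $|\alpha^m-\beta^m|\le m|\alpha-\beta|$ on $\overline{\D}$ gives $|\Delta_{k,n}|\le m|\langle x_k^*,a_n-b_n\rangle|$, and Lemma~\ref{bg}, run with a smaller summable control sequence, already makes this small.
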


\begin{proof}
	Since $(a_n-b_n)$ is not relatively norm compact, by Lemma~\ref{bg} we may find functionals
	$(x_n^*)\subset S_{\ell_q}$ with pairwise disjoint supports and a constant $\varepsilon>0$ such that
	$$
	\forall n\in \N, \quad|\langle x_n^*,a_n-b_n\rangle|\ge \varepsilon.
	$$
	Set $\alpha_n:=\langle x_n^*,a_n\rangle$ and $\beta_n:=\langle x_n^*,b_n\rangle$.
	Then $|\alpha_n|,|\beta_n|\le 1$ and
	$$
	|\alpha_n-\beta_n|=|\langle x_n^*,a_n-b_n\rangle|\ge \varepsilon.
	$$
	By Lemma~\ref{anacomp} (with $M$ being any integer so that $(M-1)q \geq p$), and after extracting and using a pigeonhole argument, there exist
	$m \geq \frac{p}{q}+1$ ($m=M$ or $m=M+1$) and $C(\varepsilon, \frac{p}{q}+1)>0$ such that
	$$\forall n\in \N, \quad
	|\alpha_n^{m}-\beta_n^{m}|\ge C(\varepsilon, \frac{p}{q}+1).
	$$
    For simplicity, we remove the dependence of $C$ with respect to $p$ and $q$ in the sequel.
	Since $(m-1)q\ge p$, Lemma~\ref{Fs} applies. Define
	$$
	\Delta_{k,n}:=\langle x_k^*,a_n\rangle^{m}-\langle x_k^*,b_n\rangle^{m}.
	$$
	Fix $N$ and scalars $c_1,\dots,c_N$. Choose $s\in\ell_\infty$ by
	$$
	s_n:=\frac{\overline{c_n}\,\overline{\Delta_{n,n}}}{|c_n|\,|\Delta_{n,n}|}\quad (1\le n\le N),\qquad s_n:=0\ (n>N),
	$$
	so that $\|s\|_\infty\le 1$ and $c_n s_n\Delta_{n,n}=|c_n|\,|\Delta_{n,n}|$ for $1\le n\le N$.
	Then, using $m_n=\frac{\delta(a_n)-\delta(b_n)}{d_n}$ and the definition of $F_s$,
	\begin{align*}
		\left\langle F_s,\sum_{n=1}^{N} c_nm_n\right\rangle
		&=\sum_{n=1}^{N}\frac{c_n}{md_n}\left(s_n\Delta_{n,n}+\sum_{k\neq n}s_k\Delta_{k,n}\right).
	\end{align*}
	Taking absolute values and using the triangle inequality,
	\begin{align*}
		\left|\left\langle F_s,\sum_{n=1}^{N} c_nm_n\right\rangle\right|
		&\ge \sum_{n=1}^{N}\frac{|c_n|}{md_n}\,|\Delta_{n,n}|
		-\sum_{n=1}^{N}\frac{|c_n|}{md_n}\sum_{k\neq n}|s_k|\,|\Delta_{k,n}|.
	\end{align*}
	The diagonal estimate gives $|\Delta_{n,n}|\ge C(\varepsilon)$.
	Moreover, by Lemma~\ref{bg} we may arrange (by prescribing a summable control sequence) that for $k\neq n$,
	$$
	|\langle x_k^*,a_n\rangle|^m\le \frac{\varepsilon^m}{2^{k+\ell+2}}
	\qquad\text{and}\qquad
	|\langle x_k^*,b_n\rangle|^m\le \frac{\varepsilon^m}{2^{k+\ell+2}},
	$$
	hence $|\Delta_{k,n}|\le |\langle x_k^*,a_n\rangle|^m+|\langle x_k^*,b_n\rangle|^m\le \frac{2\varepsilon^m}{2^{k+\ell+2}}$.
	Therefore,
	$$
	\sum_{k\neq n}|s_k|\,|\Delta_{k,n}|
	\le \sum_{k\neq n}|\Delta_{k,n}|
	\le \sum_{k\neq n}\frac{2\varepsilon^m}{2^{k+\ell+2}}
	\le \frac{\varepsilon^m}{2^\ell}.
	$$
	Consequently,
	$$
	\left|\left\langle F_s,\sum_{n=1}^{N} c_nm_n\right\rangle\right|
	\ge \sum_{n=1}^{N}\frac{|c_n|}{md_n}\left(C(\varepsilon)-\frac{\varepsilon^m}{2^\ell}\right).
	$$
	Finally, since $d_n\le 2d$ for all $n$, we obtain
	$$
	\left|\left\langle F_s,\sum_{n=1}^{N} c_nm_n\right\rangle\right|
	\ge \frac{1}{2dm}\left(C(\varepsilon)-\frac{\varepsilon^m}{2^\ell}\right)\sum_{n=1}^{N}|c_n|.
	$$
\end{proof}

\begin{lemma}\label{lem:macro-IIb}
	Assume \textnormal{(II)-(b)}: both $(a_n)$ and $(b_n)$ are not $\|\cdot\|$-relatively compact and
	$a_n-b_n\to w\neq 0$ in norm. Let $m\in\N$ be such that $(m-1)q\ge p$.
	Then, after passing to a subsequence, for all
	$N\in\N$ and all scalars $c_1,\dots,c_N\in\C$,
	$$
	\left\|\sum_{n=1}^{N} c_nm_n\right\|
	\ \ge\
	\frac{1}{\|G_s\|_L}\left|\left\langle G_s,\sum_{n=1}^{N} c_nm_n\right\rangle\right|
	\ \ge\
	\frac{\|w\|}{8dm}\Bigg(\frac{\varepsilon^m}{4}-\frac{m\varepsilon}{2^{\ell+2}}\Bigg)\sum_{n=1}^{N}|c_n|,
	$$
	where $\varepsilon>0$ is the separation parameter arising from the non-compactness of $(b_n)$ and $\ell$ is chosen so
	that $\frac{\varepsilon^m}{4}-\frac{m\varepsilon}{2^{\ell+2}}>0$.
\end{lemma}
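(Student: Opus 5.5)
We follow the scheme of Lemmas~\ref{lem:macro-case-I-lower} and~\ref{lem:macro-IIa}, but now use the test function $G_s$ of Lemma~\ref{Fs}. The idea is that in case (II)-(b) the $m$-th powers $\langle x_k^*,a_n\rangle^m$ and $\langle x_k^*,b_n\rangle^m$ could be almost equal, so $F_s$ alone may not separate them. Multiplying by $\langle\phi,\cdot\rangle$, where $\phi$ detects the fixed nonzero difference $w$, restores the separation.

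\textbf{Functionals and the factor $\phi$.} Since $(b_n)$ is weakly convergent and not relatively compact, Lemma~\ref{bg} gives $\varepsilon>0$, a subsequence, and disjointly supported $(x_k^*)\subset S_{\ell_q}$ with $\langle x_n^*,b_n\rangle>\varepsilon$. As in the previous lemmas, we prescribe a summable control so that $|\langle x_k^*,a_n\rangle|,|\langle x_k^*,b_n\rangle|\le \varepsilon 2^{-k-\ell-2}$ for $k\ne n$. We also make the supports $I_k$ start far enough out that $\|P_{I_k}w\|_p$ is tiny. Then
\[
\langle x_n^*,a_n\rangle=\langle x_n^*,b_n\rangle+\langle x_n^*,w\rangle+o(1),
\]
so after extraction $\alpha_n:=\langle x_n^*,a_n\rangle$ and $\beta_n:=\langle x_n^*,b_n\rangle$ are close and $|\beta_n|^m\ge\varepsilon^m$. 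Choose $\phi\in S_{\ell_q}$ with $\langle\phi,w\rangle=\|w\|$. Since $(a_n),(b_n)$ converge weakly, say to $a$ and $b$, we have $\langle\phi,a_n\rangle-\langle\phi,b_n\rangle\to\|w\|$.

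\textbf{Diagonal term.} For $G_s$ we get
\[
d_n m\langle G_s,m_n\rangle=\sum_k s_k\bigl(\langle\phi,a_n\rangle\langle x_k^*,a_n\rangle^m-\langle\phi,b_n\rangle\langle x_k^*,b_n\rangle^m\bigr).
\]
The $k=n$ term equals
\[
(\langle\phi,a_n\rangle-\langle\phi,b_n\rangle)\,\alpha_n^m+\langle\phi,b_n\rangle(\alpha_n^m-\beta_n^m).
\]
Since $|\alpha_n^m-\beta_n^m|\le m|\alpha_n-\beta_n|$ is small, the first piece dominates. Its modulus is at least roughly $\frac{\|w\|}{2}\,|\alpha_n|^m$, and $|\alpha_n|^m\ge(\varepsilon/2)^m$-type bounds yield a lower bound of order $\|w\|\varepsilon^m/4$. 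The off-diagonal terms $k\ne n$ are bounded by $|\langle x_k^*,a_n\rangle|^m+|\langle x_k^*,b_n\rangle|^m$, which sums to at most $m\varepsilon 2^{-\ell-2}$ after tuning the control. This is the origin of the $-\frac{m\varepsilon}{2^{\ell+2}}$ correction, and it should also be multiplied by the relevant factor of $\|w\|$, or absorbed by shrinking the control sequence.

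\textbf{Assembly and main obstacle.} Choose the unimodular signs $s_n$ so that $c_ns_n\cdot(\text{diagonal}_n)=|c_n|\,|\text{diagonal}_n|$ for $n\le N$, with $s_n=0$ for $n>N$. Use $d_n\le 2d$, and note that $\|G_s\|_L\le2$ by Lemma~\ref{Fs}, which produces the constant $\frac{1}{8dm}$. The main obstacle is the diagonal estimate. One must guarantee that $\alpha_n$ stays away from $0$ while $\alpha_n-\beta_n\to0$ uniformly; this needs the smallness of $\langle x_n^*,w\rangle$, from pushing the supports beyond the essential support of $w$. One must also control the error terms so that they are small relative to $\|w\|\varepsilon^m$. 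If the crude bound $|\alpha_n^m-\beta_n^m|\le m|\alpha_n-\beta_n|$ is insufficient, I would first try choosing $\ell$ larger, so that $|\langle x_n^*,w\rangle|\le\varepsilon 2^{-\ell-2}$ and the error becomes exactly $m\varepsilon/2^{\ell+2}$.
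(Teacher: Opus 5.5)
Your proposal is correct and follows essentially the same route as the paper: the test function $G_s$ with $\phi$ norming $w$, Lemma~\ref{bg} applied to $(b_n)$ with a strengthened summable control, the estimate $|\alpha_n^m-\beta_n^m|\le m|\alpha_n-\beta_n|$, the unimodular choice of $s$, and the bounds $d_n\le 2d$ and $\|G_s\|_L\le 2$. The only difference is minor. The paper takes $D_n=\langle\phi,a_n-b_n\rangle\langle x_n^*,b_n\rangle^m$ as the main diagonal piece, so it uses $|\beta_n|\ge\varepsilon$ directly, whereas you need $|\alpha_n|^m\ge\varepsilon^m/2$, which you get from $\alpha_n-\beta_n\to0$. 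Your explicit remark that the diagonal smallness of $\langle x_n^*,a_n-b_n\rangle$ follows from $\|a_n-b_n-w\|\to0$ and $\|P_{I_n}w\|_p\to0$ is exactly what the paper's mean-value step uses without saying so.
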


\begin{proof}
	Since $a_n-b_n\to w\neq 0$, choose $\phi\in S_{\ell_q}$ with $\langle \phi,w\rangle=\|w\|$.
	Passing to a subsequence, we may assume
	$$
	\forall n \in \N, \quad \langle \phi,a_n-b_n\rangle\ge \frac12\|w\|.
	$$
	Apply Lemma~\ref{bg} to the non relatively compact sequence $(b_n)$ to obtain functionals
	$(x_k^*)\subset S_{\ell_q}$ with pairwise disjoint supports and $\varepsilon>0$ such that
	$$
	\forall n \in \N, \quad 
	|\langle x_n^*,b_n\rangle|^m\ge \varepsilon^m.
	$$
	and, moreover (after prescribing a summable control sequence),
	$$
	\forall k\neq n, \quad
	|\langle x_k^*,a_n-b_n\rangle|\le \frac{\varepsilon}{2^{k+\ell+2}}
	\quad\text{and}\quad
	|\langle x_k^*,a_n\rangle|^m\le \frac{\varepsilon^m}{2^{k+\ell+2}}
	$$
	Fix $N$ and $c_1,\dots,c_N$. Let $G_s$ be the test function from Lemma~\ref{Fs}:
	$$
	G_s(x)=\frac1m\sum_{k\ge1}s_k\,\langle \phi,x\rangle\,\langle x_k^*,x\rangle^m.
	$$
	For $n\le N$, define the diagonal coefficient
	$$
	D_n:=\langle \phi,a_n-b_n\rangle\,\langle x_n^*,b_n\rangle^m
	$$
	and choose
	$$
	s_n:=\frac{\overline{c_n}\,\overline{D_n}}{|c_n|\,|D_n|}\quad(1\le n\le N),\qquad s_k:=0\ (k>N),
	$$
	so that $\|s\|_\infty\le 1$ and $c_ns_nD_n=|c_n|\,|D_n|$. A direct computation yields
	$$
	\langle G_s,m_n\rangle=\frac{G_s(a_n)-G_s(b_n)}{d_n}
	=\frac{1}{md_n}\sum_{k\ge1}s_k\Big(\langle \phi,a_n\rangle\langle x_k^*,a_n\rangle^m
	-\langle \phi,b_n\rangle\langle x_k^*,b_n\rangle^m\Big).
	$$
	As in the original argument, we expand the $k=n$ term and use the mean value estimate
	$$
	\big|\langle x_n^*,a_n\rangle^m-\langle x_n^*,b_n\rangle^m\big|
	\le m\,|\langle x_n^*,a_n-b_n\rangle|.
	$$
	Using the choice of $s_n$ and the bounds coming from Lemma~\ref{bg}, one obtains
	$$
	\left|\left\langle G_s,\sum_{n=1}^{N} c_nm_n\right\rangle\right|
		\ge \sum_{n=1}^{N}\frac{|c_n|}{md_n}\left(\frac{\|w\|}{2}\varepsilon^m
		-\frac{\|w\|\varepsilon^m}{4}-\frac{m\varepsilon\|w\|}{2^{\ell+3}}-\frac{m\varepsilon\|w\|}{2^{\ell+3}}\right).
	$$
	Therefore,
	$$
	\left|\left\langle G_s,\sum_{n=1}^{N} c_nm_n\right\rangle\right|
		\ge \sum_{n=1}^{N}\frac{|c_n|}{2md_n}\,\|w\|\left(\frac{\varepsilon^m}{4}-\frac{m\varepsilon}{2^{\ell+2}}\right).
	$$
	Since $d_n\le 2d$ for all $n$, one has
	$$
	\left|\left\langle G_s,\sum_{n=1}^{N} c_nm_n\right\rangle\right|
	\ge \frac{\|w\|}{4dm}\left(\frac{\varepsilon^m}{4}-\frac{m\varepsilon}{2^{\ell+2}}\right)\sum_{n=1}^{N}|c_n|.
	$$
	Dividing by $\|G_s\|_L$ (with $\|G_s\|_L\le 2\|s\|_\infty\le 2$) yields the claim.
\end{proof}

\subsubsection{The micro regime: $d=0$}\label{subsubsec:micro}
Assume that $d=0$, that is, $d_n=\|x_n-y_n\|\to 0$. As in the macro regime, our aim is to obtain a uniform lower
$\ell_1$-estimate of the form \eqref{eq:lower-l1-plan}. Most of the arguments below apply to $\ell_p$-ranges for
$1\le p<\infty$; however, the last case will highlight the genuine specificity of the $\ell_1$-range.
Recall that $a_n,b_n\in B_{\ell_p}$ and
$$
m_n=\frac{\delta(a_n)-\delta(b_n)}{d_n},
\qquad
w_n:=\frac{a_n-b_n}{d_n}.
$$
Since $f$ is Lipschitz, $(w_n)$ is bounded. Up to passing to a subsequence, we reduce to the following situations:
\begin{enumerate}
	\item[(III)] $w_n\to w$ in norm, and one of the endpoint sequences (say $(b_n)$, or symmetrically $(a_n)$) is not relatively norm compact;
	\item[(IV)] $(w_n)$ is not relatively norm compact.
\end{enumerate}

	One may also notice that there is a seemingly missing case: $w_n\to w$ in norm and both $(a_n)$ and $(b_n)$ converge
	in norm in $B_{\ell_p}$. We claim that this situation cannot occur under our standing assumptions.
	Indeed, since $d_n\to 0$ and $a_n-b_n=d_n w_n$, we have $\|a_n-b_n\|_p\to 0$, and therefore the two sequences must
	converge to the same limit, say $a\in \overline{B}_{\ell_p}$.
	Consequently, the hypotheses of Lemma~\ref{Bayart} are satisfied, and we obtain
	$$
	m_n=\frac{\delta(a_n)-\delta(b_n)}{d_n}\longrightarrow e_{a,w}\qquad\text{in norm in }\G_0(B_{\ell_p}).
	$$
	In particular, $(m_n)$ has a norm-convergent subsequence, contradicting our assumption.
	Note also that (III) forces $w\neq 0$. Indeed, otherwise $(m_n)$ is norm convergent by Lemma~\ref{Bayart}).

\begin{lemma}\label{lem:micro-III-lower}
	Assume \textnormal{(III)}: $w_n\to w \neq 0$ in norm and $(b_n)$ is not relatively norm compact. 
	Then, after passing to a subsequence, for every $N\in\N$ and every choice of scalars $c_1,\dots,c_N\in\C$,
		there exists $s\in\ell_\infty$ with $\|s\|_\infty\le 1$ such that
	$$
	\left\|\sum_{n=1}^{N} c_n m_n\right\|
	\ \ge\
	\frac{1}{\|G_s\|_L}\left|\left\langle G_s,\sum_{n=1}^{N} c_n m_n\right\rangle\right|
	\ \ge\
	\left(\frac{\|w\|\varepsilon^m}{4m}-\frac{\varepsilon}{2^{\ell}}-\frac{\|w\|\varepsilon^m}{m\,2^\ell}\right)\sum_{n=1}^{N}|c_n|,
	$$
	where $m\in \N$ is such that $(m-1)q\geq p$, $\ell \in \N$ is chosen large enough so that the last bracket is positive,
	$G_s$ is the test function defined in Lemma~\ref{Fs}, and $\varepsilon>0$ is the separation parameter arising from the
	non-compactness of $(b_n)$.
\end{lemma}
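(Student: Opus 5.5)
The plan is to mimic the proof of Lemma~\ref{lem:macro-IIb}, with the test function $G_s$ of Lemma~\ref{Fs}. The difference is that the $\ell_1$-lower bound must now come from the quotient $w_n=(a_n-b_n)/d_n$ rather than from $a_n-b_n$ itself.

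First choose $\phi\in S_{\ell_q}$ with $\langle\phi,w\rangle=\|w\|$. Passing to a subsequence, we may assume $\mathrm{Re}\langle\phi,w_n\rangle\ge \|w\|/2$ and $\|w_n-w\|$ as small as needed. Since $\ell_p$ is reflexive (weak$^*$ compactness of bounded sets in $\ell_1$ when $p=1$), we may also assume that $(b_n)$ converges weakly (resp. weak$^*$). Note that $a_n-b_n=d_nw_n\to 0$ in norm, so $(a_n)$ has the same weak limit and is also not relatively compact.

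Apply Lemma~\ref{bg} to $(b_n)$ with a prescribed summable control, obtaining disjointly supported $(x_k^*)\subset S_{\ell_q}$ and $\varepsilon>0$ such that:
\begin{itemize}
\item $|\langle x_n^*,b_n\rangle|\ge\varepsilon$ for every $n$;
\item $|\langle x_k^*,b_n\rangle|^m\le \varepsilon^m2^{-k-\ell}$ for $k\ne n$;
\item $|\langle x_k^*,w_n\rangle|\le \varepsilon 2^{-k-\ell}$ for $k\neq n$.
\end{itemize}
The last condition is possible because $w_n\to w$ in norm, so the tails $P_{I_k}w_n$ become uniformly small once the intervals $I_k$ start far enough out. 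The same smallness holds for $\langle x_k^*,a_n\rangle$, since $a_n=b_n+d_nw_n$.

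Next compute $\langle G_s,m_n\rangle=(G_s(a_n)-G_s(b_n))/d_n$ via the derivative. Since $d_n\to0$ and the second derivative of $G_s$ is uniformly bounded on the relevant region (it is a polynomial of degree $m+1$ with controlled coefficients), $\langle G_s,m_n\rangle=dG_s(b_n)(w_n)+O(d_n)$, uniformly in $s$ with $\|s\|_\infty\le1$. Passing to a subsequence with $\sum_n d_n$ tiny, the error term is absorbed. The formula for $dG_s$ in Lemma~\ref{Fs} then gives
$$
dG_s(b_n)(w_n)=\frac1m\sum_k s_k\langle\phi,w_n\rangle\langle x_k^*,b_n\rangle^m+\sum_k s_k\langle\phi,b_n\rangle\langle x_k^*,b_n\rangle^{m-1}\langle x_k^*,w_n\rangle .
$$
The diagonal term $k=n$ of the first sum is $D_n:=\frac1m\langle\phi,w_n\rangle\langle x_n^*,b_n\rangle^m$, with $|D_n|\ge\|w\|\varepsilon^m/(2m)$. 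Choose $s_n=\overline{c_n D_n}/|c_nD_n|$ for $n\le N$ and $s_n=0$ otherwise.

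The off-diagonal terms of the first sum are bounded by $\|w\|\varepsilon^m/(m2^\ell)$ using the second control condition. In the second sum, the terms $k\ne n$ are bounded by $\varepsilon/2^\ell$ using the third condition.

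The main obstacle is the diagonal term of the second sum, $s_n\langle\phi,b_n\rangle\langle x_n^*,b_n\rangle^{m-1}\langle x_n^*,w_n\rangle$. It is not small a priori and could cancel $D_n$. My first attempt is to write $\langle x_n^*,w_n\rangle=\langle x_n^*,w\rangle+\langle x_n^*,w_n-w\rangle$. The first piece is small because $x_n^*$ is supported on $I_n$, far out, where $P_{I_n}w\to0$; the second is small by norm convergence. Both therefore fall below $\varepsilon/2^\ell$ after extraction, and this refinement is what the hypothesis $w_n\to w$ in norm is for. If direct absorption leaves a loss, compensate by halving the diagonal lower bound, which yields the constant $\|w\|\varepsilon^m/(4m)$.

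Collecting all terms gives the stated bound. Finally divide by $\|G_s\|_L\le2$ and use $\bigl\|\sum c_nm_n\bigr\|\ge|\langle G_s,\sum c_nm_n\rangle|/\|G_s\|_L$.
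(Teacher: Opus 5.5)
Your proof is correct, and its skeleton is the paper's: the same $\phi$, Lemma~\ref{bg} applied to $(b_n)$, the same diagonal quantity $\langle\phi,w_n\rangle\langle x_n^*,b_n\rangle^m$, and the same choice of $s$. Your resolution of the ``main obstacle'' is also what the paper does. It imposes $|\langle x_k^*,w_n\rangle|\le\varepsilon 2^{-k-\ell-2}$ for \emph{all} $k,n$, diagonal included, which is possible because $\{w_n\}$ is norm compact and so has uniformly small tails.

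The real difference is how $\langle G_s,m_n\rangle$ is expanded. The paper does not linearize. It uses the exact identity $\langle\phi,a_n\rangle\langle x_k^*,a_n\rangle^m-\langle\phi,b_n\rangle\langle x_k^*,b_n\rangle^m=d_n\langle\phi,w_n\rangle\langle x_k^*,b_n\rangle^m+\langle\phi,a_n\rangle(\langle x_k^*,a_n\rangle^m-\langle x_k^*,b_n\rangle^m)$. It then bounds the last bracket by $m\,d_n|\langle x_k^*,w_n\rangle|$ via $|\alpha^m-\beta^m|\le m|\alpha-\beta|$ on $\overline{\D}$. So there is no remainder, no second-order information, and $d_n\to0$ is never used.

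Your route, $\langle G_s,m_n\rangle=dG_s(b_n)(w_n)+O(d_n)$, is conceptually natural, since the molecule is close to the derivation at $b_n$ in direction $w_n$. It produces the same error terms plus a remainder. The cost is a bound on $d^2G_s$ over the whole ball, uniform in $\|s\|_\infty\le1$, and your justification (``controlled coefficients'') needs an actual argument. Since $\|b_n\|$ may tend to $1$, Cauchy estimates on balls $z+rB$ inside the unit ball (as in Lemma~\ref{Bayart}) are not available. The bound does hold by homogeneity:
\begin{itemize}
\item since $(m-1)q\ge p$ means $m\ge p$, one has $|G_s(x)|\le\frac1m\|x\|_p^{m+1}$ on all of $\ell_p$;
\item a one-variable Cauchy estimate for $\lambda\mapsto G_s(x+\lambda h)$ on $|\lambda|=1$ then gives $|d^2G_s(x)[h,h]|\le 2^{m+2}/m$ for $\|x\|_p<1$ and $\|h\|_p=1$;
\item hence the remainder is at most $\frac{2^{m+1}}{m}d_n\|w_n\|^2$.
\end{itemize}

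Minor points:
\begin{itemize}
\item You only need $\sup_n d_n$ small, not $\sum_n d_n$.
\item The ``halving'' fallback is unnecessary. Combined with the division by $\|G_s\|_L\le 2$, it would give $8m$ instead of $4m$.
\item The exact constants of the statement are recovered by building extra factors of $2$ into the control sequences. For instance, your off-diagonal bound is really $\|w_n\|\varepsilon^m/(m2^\ell)$, not $\|w\|\varepsilon^m/(m2^\ell)$.
\end{itemize}
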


\begin{proof}
	Choose $\phi \in S_{\ell_q}$ such that $\langle \phi,w\rangle=\|w\|$.
	Since $w_n\to w$ in norm, after discarding finitely many terms we may assume
	\begin{equation}\label{eq:phi-wn-lower-microIII}
		\forall n \in \N, \quad |\langle \phi,w_n\rangle|\ge \frac12\|w\|.
	\end{equation}
	Since $(b_n)$ is not relatively norm compact, by Lemma~\ref{bg}
	we can find, after passing to a subsequence, functionals $(x_k^*)\subset S_{\ell_q}$ with pairwise disjoint supports and
	a constant $\varepsilon>0$ such that
	\begin{equation}\label{eq:diag-b-microIII}
		\forall n \in \N, \quad |\langle x_n^*,b_n\rangle|\ge \varepsilon
	\end{equation}
	and, for a fixed integer $\ell\ge 1$ (to be chosen later),
	\begin{equation}\label{eq:offdiag-b-microIII}
		|\langle x_k^*,b_n\rangle|^m\le \frac{\varepsilon^m}{2^{k+\ell+2}},\quad\forall k\neq n.
	\end{equation}
	Moreover, since $w\in\ell_p$, by taking the supports far enough in the tail and using $w_n\to w$ in norm, we may also
	arrange that
	\begin{equation}\label{eq:w-small-microIII}
		|\langle x_k^*,w_n\rangle|\le \frac{\varepsilon}{2^{k+\ell+2}},\quad\forall k,n \in\N.
	\end{equation}
	
	\medskip
	Fix $N$ and coefficients $c_1,\dots,c_N$. Choose
	$$
	s_n:=\frac{\overline{c_n}\,\overline{D_n}}{|c_n|\,|D_n|}\quad(1\le n\le N),\qquad s_k:=0\ (k>N),
	$$
	so that $\|s\|_\infty\le 1$ and $c_ns_nD_n=|c_n|\,|D_n|$. Let $G_s$ be the test function from Lemma~\ref{Fs},
	$$
	G_s(x)=\frac{1}{m}\sum_{k\ge 1}s_k\,\langle \phi,x\rangle\,\langle x_k^*,x\rangle^m,
	\qquad x\in B_{\ell_p}.
	$$
	For $1\le n\le N$, set the diagonal scalar
	$$
	D_n:=\langle \phi,w_n\rangle\,\langle x_n^*,b_n\rangle^m,
	$$
	
	\medskip
	We now estimate the pairing. For each $n$,
	\begin{align*}
	\langle G_s,m_n\rangle&=\frac{G_s(a_n)-G_s(b_n)}{d_n}
	=\frac{1}{md_n}\sum_{k\ge 1}s_k\Big(\langle\phi,a_n\rangle\langle x_k^*,a_n\rangle^m
	-\langle\phi,b_n\rangle\langle x_k^*,b_n\rangle^m\Big)\\
	&=\frac{1}{m}\sum_{k\ge 1}s_k\,\langle \phi,w_n\rangle\,\langle x_k^*,b_n\rangle^m
	+\frac{1}{md_n}\sum_{k\ge 1}s_k\,\langle \phi,a_n\rangle\big(\langle x_k^*,a_n\rangle^m-\langle x_k^*,b_n\rangle^m\big).
	\end{align*}
	For the second sum we use the elementary estimate 
	$$
	\big|\langle x_k^*,a_n\rangle^m-\langle x_k^*,b_n\rangle^m\big|
	\le m\,|\langle x_k^*,a_n-b_n\rangle|
	=m\,d_n\,|\langle x_k^*,w_n\rangle|,
	$$
	hence
	$$
	\left|\frac{1}{md_n}\,\langle \phi,a_n\rangle\big(\langle x_k^*,a_n\rangle^m-\langle x_k^*,b_n\rangle^m\big)\right|
	\le |\langle x_k^*,w_n\rangle|.
	$$
	Therefore, using \eqref{eq:w-small-microIII},
	$$
		\left|\frac{1}{md_n}\sum_{k\ge 1}s_k\,\langle \phi,a_n\rangle\big(\langle x_k^*,a_n\rangle^m-\langle x_k^*,b_n\rangle^m\big)\right|
			\le \sum_{k\ge 1}|\langle x_k^*,w_n\rangle|
			\le \sum_{k\ge 1}\frac{\varepsilon}{2^{k+\ell+2}}
			\le \frac{\varepsilon}{2^\ell}.
	$$
	
	For the first sum, split the diagonal and off-diagonal parts. By \eqref{eq:phi-wn-lower-microIII} and
	\eqref{eq:diag-b-microIII}, and by the choice of $s_n$,
	$$
	\frac{1}{m}\,c_ns_n\,\langle \phi,w_n\rangle\,\langle x_n^*,b_n\rangle^m
	=\frac{|c_n|}{m}\,|D_n|
	\ge \frac{|c_n|}{m}\cdot \frac{\|w\|}{2}\,\varepsilon^m.
	$$
	Moreover, using \eqref{eq:offdiag-b-microIII} and $|\langle \phi,w_n\rangle|\le \|\phi\|\,\|w_n\|\le 2\|w\|$
	(after discarding finitely many terms),
	$$
	\left|\frac{1}{m}\sum_{k\neq n}s_k\,\langle \phi,w_n\rangle\,\langle x_k^*,b_n\rangle^m\right|
	\le \frac{2\|w\|}{m}\sum_{k\neq n}\frac{\varepsilon^m}{2^{k+\ell+2}}
		\le \frac{\|w\|\varepsilon^m}{m\,2^\ell}.
	$$
	
	Putting the estimates together and summing over $n=1,\dots,N$, we obtain
	$$
	\left|\left\langle G_s,\sum_{n=1}^{N} c_n m_n\right\rangle\right|
		\ge \left(\frac{\|w\|\varepsilon^m}{2m}-\frac{\|w\|\varepsilon^m}{m\,2^\ell}-\frac{\varepsilon}{2^\ell}\right)\sum_{n=1}^{N}|c_n|.
	$$
	Finally, since $\|G_s\|_L\le 2\|s\|_\infty\le 2$ (Lemma~\ref{Fs}), we get the claimed lower bound for the norm of
	$\sum\limits_{n=1}^N c_n m_n$. Choosing $\ell$ large enough makes the bracket positive.
\end{proof}

The next lemma is the only point in our argument where the case of an $\ell_p$-range with $1<p<\infty$ differs from the
	$\ell_1$ case. A first instance of this phenomenon already appeared in Example~\ref{ex:lp-compact-range-not-Lf}, and we
	will present a further one in Example~\ref{wcpNOTcp}.

\begin{lemma}\label{lem:l1-noncompact-slopes}
	Assume that $p=1$, that is, $f\in \HL_0(B_X,\ell_1)$ with $f(B_X)\subset B_{\ell_1}$. Assume \textnormal{(IV)}: $(w_n)$ is not relatively norm compact in $\ell_1$. Then $(m_n)$ admits a subsequence equivalent to the canonical basis of
	$\ell_1$. In particular, the sequence of molecules $(m_n)$ cannot be relatively weakly compact in $\G_0(B_{\ell_1})$.
\end{lemma}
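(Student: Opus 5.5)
The plan is to exploit the fact that for $p=1$ the test functions of Lemma~\ref{Fs} may be taken \emph{linear} ($m=1$). Pairing a molecule with a linear functional then reads off the slope $w_n$ exactly, with no error from the micro-scale. This is precisely what fails for $1<p<\infty$, where $m\ge 2$ is forced and a homogeneous polynomial of degree $m$ sees $(\delta(a_n)-\delta(b_n))/d_n$ only through terms that vanish as $d_n\to0$.

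\textbf{Step 1: extraction.} Since $f$ is Lipschitz, $\|w_n\|_1\le \Lip(f)$. Set $\tilde w_n:=w_n/\Lip(f)\in \overline{B_{\ell_1}}$, shrinking slightly if one insists on the open ball; this only changes constants. As $\ell_1=c_0^*$, after extraction $(\tilde w_n)$ is weak$^*$ convergent. By hypothesis (IV) it is not relatively norm compact. Lemma~\ref{bg} in its $p=1$, weak$^*$ version then gives $\varepsilon>0$, a subsequence (still indexed by $n$) and functionals $x_k^*\in S_{\ell_\infty}$ with pairwise disjoint finite supports $I_k$ such that
$$\langle x_n^*,\tilde w_n\rangle>\varepsilon,\qquad |\langle x_k^*,\tilde w_n\rangle|<\frac{\varepsilon}{2^{k+2}}\quad (k\neq n).$$

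\textbf{Step 2: a linear test function.} Fix $N$ and scalars $c_1,\dots,c_N$. Put $s_n:=\overline{c_n}/|c_n|$ for $n\le N$ (with $s_n=0$ if $c_n=0$) and $s_k:=0$ for $k>N$. Lemma~\ref{Fs} with $m=1$ gives the linear functional $F_s(x)=\sum_k s_k\langle x_k^*,x\rangle$, with $\|F_s\|_L\le 1$. This is immediate here too, since the disjoint supports give $\|\sum_k s_kx_k^*\|_\infty\le1$.

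Linearity yields, exactly,
$$\langle F_s,m_n\rangle=\frac{F_s(a_n)-F_s(b_n)}{d_n}=F_s(w_n)=\Lip(f)\sum_{k}s_k\langle x_k^*,\tilde w_n\rangle .$$

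\textbf{Step 3: lower $\ell_1$-estimate.} Summing, and splitting into diagonal and off-diagonal terms,
$$\mathrm{Re}\Big\langle F_s,\sum_{n=1}^N c_nm_n\Big\rangle\ \ge\ \Lip(f)\sum_{n=1}^N|c_n|\Big(\langle x_n^*,\tilde w_n\rangle-\sum_{k\ne n}|\langle x_k^*,\tilde w_n\rangle|\Big)\ \ge\ \frac{3\varepsilon\Lip(f)}{4}\sum_{n=1}^N|c_n|.$$
Hence $\|\sum c_nm_n\|\ge \frac{3\varepsilon}{4}\Lip(f)\sum|c_n|$. Combined with the upper estimate \eqref{eq:upper-l1}, this shows that $(m_n)$ is equivalent to the $\ell_1$-basis. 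In particular $(m_n)$ is not relatively weakly compact, since the $\ell_1$-basis is not weakly Cauchy and $\G_0(B_{\ell_1})$ would otherwise contain a weakly convergent subsequence of it.

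\textbf{Main obstacle.} The only delicate point is Step 1, namely the validity of the gliding hump of Lemma~\ref{bg} for weak$^*$ convergent sequences in $\ell_1$. There one has to check that the weak$^*$ limit $x$ still has small mass on far-out intervals, which holds because $x\in\ell_1$. One also has to check that the norm functionals $x_k^*$ can be taken with values of modulus one on $I_k$ (signs of the coordinates), so that they lie in $S_{\ell_\infty}$. Once this is secured, the estimate is purely algebraic, because no Taylor remainder appears.
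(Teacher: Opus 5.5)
Your proof is correct. It differs from the paper's only in how you obtain the $\ell_1$-estimate for the slopes $(w_n)$.

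\textbf{The paper's route.} The paper argues abstractly. A subsequence of $(w_n)$ with no norm-convergent subsequence has, by the Schur property of $\ell_1$, no weakly Cauchy subsequence. Rosenthal's $\ell_1$ theorem then gives a subsequence of $(w_n)$ equivalent to the $\ell_1$-basis. This lower estimate is transported to $(m_n)$ by the barycenter map, $\|\sum_i c_im_i\|\ge\|\beta_{\ell_1}(\sum_i c_im_i)\|=\|\sum_i c_iw_i\|$.

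\textbf{Your route.} Your transfer step is the same in substance: $\langle F_s,m_n\rangle=F_s(w_n)=\langle F_s,\beta_{\ell_1}(m_n)\rangle$, so you are testing against $\beta_{\ell_1}^*(\ell_\infty)$. What you do differently is build the $\ell_1$-estimate for $(w_n)$ by hand, using the weak$^*$ gliding hump of Lemma~\ref{bg} and the linear test functions of Lemma~\ref{Fs} with $m=1$.

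\textbf{What each buys.} The paper's argument is shorter but relies on a deep theorem. Yours is elementary and self-contained, and it yields explicit equivalence constants $\tfrac{3\varepsilon}{4}\Lip(f)$ and $\Lip(f)$. It also fits the test-function scheme of the macro-regime lemmas. Finally, it shows exactly why $p=1$ is special: disjointly supported functionals in $S_{\ell_\infty}$ with unimodular coefficients sum to a functional of norm at most one. In $\ell_q$ with $q<\infty$, by contrast, the norm of $\sum_{k\le N}s_kx_k^*$ grows like $N^{1/q}$.

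\textbf{A point of order in Step~1.} Hypothesis (IV) concerns the whole sequence, and an arbitrary weak$^*$ convergent subsequence of $(w_n)$ could be norm convergent. You should first pass to a norm-separated subsequence, and only then extract a weak$^*$ convergent one. With this ordering, Lemma~\ref{bg} applies exactly as you use it.
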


\begin{proof}
	Since $\ell_1$ has the Schur property, a weakly Cauchy sequence in $\ell_1$ is norm Cauchy. Therefore, if $(w_n)$ is not
	relatively norm compact, it cannot have any weakly Cauchy subsequence.
	By Rosenthal's $\ell_1$ theorem (see, e.g., \cite[p.~252]{AK}), every bounded sequence in a Banach space either admits a
	weakly Cauchy subsequence or a subsequence equivalent to the canonical basis of $\ell_1$.
	Applying this to $(w_n)$, we conclude that $(w_n)$ admits a subsequence equivalent to the $\ell_1$ basis.
	
	Finally, since $\beta_{\ell_1}\colon \G_0(B_{\ell_1})\to \ell_1$ satisfies $\beta_{\ell_1}(m_n)=w_n$ and $\|\beta_{\ell_1}\|\le 1$, we have for every
	finite family of scalars $(c_i)$,
	$$
	\Big\|\sum_i c_i m_i\Big\|\ \ge\ \Big\|\beta\Big(\sum_i c_i m_i\Big)\Big\|
	=\Big\|\sum_i c_i w_i\Big\|.
	$$
	Hence any $\ell_1$ subsequence of $(w_n)$ yields an $\ell_1$ subsequence of $(m_n)$.
	In particular, $(m_n)$ cannot be relatively weakly compact in $\G_0(B_{\ell_1})$.
\end{proof}

\subsection{Assembling the argument}

From the preceding analysis we obtain our main result in the case of an $\ell_1$-range. For the definition and background information about (super) strict singularity and fixing a copy of $\ell_1$, we refer the reader to \cite{Lemay} and the references therein.

\begin{theorem}\label{thm:main}
	Let $f\in \HL_0(B_X,\ell_1)$ satisfy $f(B_X)\subset B_{\ell_1}$. The following assertions are equivalent:
	\begin{enumerate}[(i)]
		\item $\widehat f\colon \G_0(B_X)\to \G_0(B_{\ell_1})$ is compact;
		\item $\widehat f\colon \G_0(B_X)\to \G_0(B_{\ell_1})$ is weakly compact;
		\item $\widehat f\colon \G_0(B_X)\to \G_0(B_{\ell_1})$ is super strictly singular;
		\item $\widehat f\colon \G_0(B_X)\to \G_0(B_{\ell_1})$ is strictly singular;
		\item $\widehat f$ does not fix any copy of $\ell_1$.
	\end{enumerate}
\end{theorem}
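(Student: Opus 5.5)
The easy implications form a chain of standard operator-theoretic facts. The only substantial step is (v)$\Rightarrow$(i), which we prove by contraposition using the case analysis of Subsection~\ref{subsec:case-analysis}.

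\textbf{The easy chain.} We prove
\[
\text{(i)}\Rightarrow\text{(iii)}\Rightarrow\text{(iv)}\Rightarrow\text{(v)}
\quad\text{and}\quad
\text{(i)}\Rightarrow\text{(ii)}\Rightarrow\text{(v)}.
\]
\begin{itemize}
\item (i)$\Rightarrow$(iii): compact operators are super strictly singular, since no ultrapower of a compact operator can be an isomorphism on an infinite-dimensional subspace.
\item (iii)$\Rightarrow$(iv): super strict singularity implies strict singularity by definition (see \cite{Lemay}).
\item (iv)$\Rightarrow$(v): if $\widehat f$ fixed a copy of $\ell_1$, it would be an isomorphism on an infinite-dimensional subspace, contradicting strict singularity.
\item (i)$\Rightarrow$(ii): trivial.
\item (ii)$\Rightarrow$(v): a weakly compact operator maps the unit vector basis of a fixed copy of $\ell_1$ to a relatively weakly compact sequence. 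That image sequence is equivalent to the $\ell_1$ basis, which is impossible.
\end{itemize}

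\textbf{Reduction of (v)$\Rightarrow$(i).} Assume $\widehat f$ is not compact. By Proposition~\ref{mol}, there are $x_n\neq y_n$ in $B_X$ such that the sequence
\[
m_n=\widehat f\Big(\frac{\delta(x_n)-\delta(y_n)}{\|x_n-y_n\|}\Big)
\]
has no norm-convergent subsequence. Put $\mu_n:=\frac{\delta(x_n)-\delta(y_n)}{\|x_n-y_n\|}\in\mathcal M$, so that $\|\mu_n\|=1$ and $\widehat f(\mu_n)=m_n$. The goal is to pass to a subsequence on which $(m_n)$ is equivalent to the $\ell_1$ basis. Granting this, the upper estimate \eqref{eq:upper-l1}, applied both to $(\mu_n)$ and to $(m_n)$, gives
\[
C\sum|c_n|\le\Big\|\sum c_n m_n\Big\|\le\|\widehat f\|\,\Big\|\sum c_n\mu_n\Big\|\le \|\widehat f\|\sum|c_n| .
\]
Hence $(\mu_n)$ is also equivalent to the $\ell_1$ basis, and $\widehat f$ restricted to $\overline{\linsp}\{\mu_n\}\cong\ell_1$ is an isomorphism. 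So $\widehat f$ fixes a copy of $\ell_1$, contradicting (v).

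\textbf{Obtaining the $\ell_1$ subsequence.} This is the case analysis of Subsection~\ref{subsec:case-analysis} with $p=1$, $q=\infty$, and $m=1$ allowed in Lemma~\ref{Fs}.

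First we pass to a subsequence with $d_n\to d$. Since $\ell_1$ is not reflexive, we use weak$^*$ convergence of $(a_n)$, $(b_n)$ in $\ell_1=c_0^*$ instead of weak convergence; such subsequences exist by weak$^*$ sequential compactness of bounded sets of a separable dual. Lemma~\ref{bg} is stated in exactly this weak$^*$ form for $p=1$.

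\begin{itemize}
\item If $d>0$: Lemmas~\ref{lem:macro-case-I-lower}, \ref{lem:macro-IIa} and~\ref{lem:macro-IIb} provide a uniform lower $\ell_1$ estimate via test functions $F_s$ or $G_s$ with $\|F_s\|_L\le 1$ and $\|G_s\|_L\le 2$.
\item If $d=0$: the missing case was excluded via Lemma~\ref{Bayart}. Lemma~\ref{lem:micro-III-lower} handles case (III), and Lemma~\ref{lem:l1-noncompact-slopes} handles case (IV), which is where the Schur property of $\ell_1$ enters.
\end{itemize}

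\textbf{Expected obstacle.} The main difficulty is checking that the case split is exhaustive with weak$^*$ in place of weak limits. In case (II)-(b), for instance, the limit $w$ of $a_n-b_n$ is a norm limit, so the choice of $\phi\in S_{\ell_\infty}$ with $\langle\phi,w\rangle=\|w\|$ is unaffected. In case (I), Lemma~\ref{bg} only needs weak$^*$ convergence.

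A second point to verify is that the choice of $s$ depending on $N$ and $(c_i)$ is harmless. It is: the test function may change with the coefficients while the constant $C$ stays uniform.
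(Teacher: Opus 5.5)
Your architecture matches the paper's. The paper derives (ii)$\Rightarrow$(i) and (v)$\Rightarrow$(ii) from the case analysis, while you use the standard (ii)$\Rightarrow$(v) and prove (v)$\Rightarrow$(i). You also usefully spell out a step the paper leaves implicit: the norm-one preimages $\mu_n$ are themselves an $\ell_1$-sequence, so $\widehat f$ genuinely fixes $\ell_1$. The easy implications and the weak$^*$ adaptation are fine; your ``expected obstacle'' is not where the difficulty lies.

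\textbf{The gap.} It is the sentence ``the missing case was excluded via Lemma~\ref{Bayart}''. In the micro regime with $w_n\to w$ and $a_n,b_n\to a$ in norm, $a$ is only known to lie in $\overline{B}_{\ell_1}$. Lemma~\ref{Bayart} needs $a\in B_{\ell_1}$: its proof uses a compact $K$ with $K+rB_{\ell_1}\subset B_{\ell_1}$ to bound $d^2h$ uniformly. For $\|a\|=1$ its conclusion fails. Take $X=\C$, $f(z)=ze_1$, $x_n=1-\frac1n$, $y_n=1-\frac2n$. Then $a_n,b_n\to e_1$, $d_n=\frac1n$ and $w_n=e_1$. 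Let $h_N(x)=x_1^{N+1}/(N+1)$, so that $\|h_N\|_L=1$. For each fixed $N$ one has $\langle h_N,m_n\rangle\to 1$, but $\langle h_{n-1},m_n\rangle\to e^{-1}-e^{-2}$, so $(m_n)$ has no norm-convergent subsequence. Since $f(B_X)\subset \C e_1$, none of (I)--(IV) can ever occur for this $f$. Your argument therefore produces no $\ell_1$-sequence, although $\widehat f$ is not compact. The paper's micro-regime paragraph makes the same unjustified claim, so citing it does not help. This boundary situation is exactly where the theorem is later used (for $h=J\circ g$ in Corollary~\ref{cor:Finite-dim-range}), so it cannot be dismissed.

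\textbf{How to close it.} You need a separate argument when $\overline{f(B_X)}\cap S_{\ell_1}\neq\varnothing$. If instead $\overline{f(B_X)}\subset B_{\ell_1}$, every norm limit of $f(x_n)$ is interior, and your case analysis is then exhaustive. The disjointly supported test functions of Lemma~\ref{Fs} cannot help in the boundary case, since the range may be one-dimensional. One way to proceed:
\begin{enumerate}
\item Choose $y^*\in S_{\ell_\infty}$ norming a point of $\overline{f(B_X)}\cap S_{\ell_1}$ and set $g=y^*\circ f$. Lemma~\ref{contp4} gives $z_n$ and $u_n\in S_X$ with $\zeta_n:=g(z_n)\to 1$ and $|c_n|\ge\frac12$, where $c_n:=dg(z_n)(u_n)$.
\item Each $\zeta_n$ is interior, so Lemma~\ref{Bayart} (with $n$ fixed and $t\to 0$) yields $t_n$ such that the norm-one molecules $\mu_n=\frac{\delta(z_n+t_nu_n)-\delta(z_n)}{t_n}$ satisfy $\|\widehat g(\mu_n)-e_{\zeta_n,c_n}\|$ as small as desired.
\item Under the isometry $\HL_0(\D)\to H^\infty(\D)$, $h\mapsto h'$, the functional $e_{\zeta,c}$ is $c$ times evaluation at $\zeta$. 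Pass first to an interpolating subsequence of $(\zeta_n)$, with constant $M$; this is possible since $|\zeta_n|\to 1$. Interpolating the values $\overline{\mathrm{sgn}(a_nc_n)}$ gives $\|\sum a_n e_{\zeta_n,c_n}\|\ge \frac{1}{2M}\sum|a_n|$.
\item With approximation error below $\frac1{4M}$, this gives $\|\sum a_n\widehat g(\mu_n)\|\ge\frac{1}{4M}\sum|a_n|$. Since $\widehat g=\widehat{y^*}\circ\widehat f$ with $\|\widehat{y^*}\|\le 1$, $\widehat f$ is an isomorphism on $\overline{\linsp}\{\mu_n\}\cong\ell_1$.
\end{enumerate}
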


\begin{proof}
	The implications
	$$
	\textnormal{(i)}\implies\textnormal{(iii)}\implies\textnormal{(iv)}\implies\textnormal{(v)}
	\qquad\text{and}\qquad
	\textnormal{(i)}\implies\textnormal{(ii)}
	$$
	are standard. It remains to show that \textnormal{(ii)} implies \textnormal{(i)}, and that \textnormal{(v)} implies \textnormal{(ii)}.
	Assume that $\widehat f$ is not weakly compact. Then, by Proposition~\ref{mol}, the set $\widehat f(\mathcal M)$ is not relatively weakly compact. Our case analysis (macro regime $d>0$, micro regime
	$d=0$) shows that in this situation one can extract a subsequence of $\widehat f(\mathcal M)$ equivalent to the canonical
	basis of $\ell_1$. In particular, $\widehat f$ fixes a copy of $\ell_1$, so \textnormal{(v)} fails.
	Equivalently, \textnormal{(v)} implies \textnormal{(ii)}.
	
	Finally, the same case analysis shows that whenever $\widehat f$ fails to be compact, one can extract an $\ell_1$-sequence
	in $\widehat f(\mathcal M)$, and hence $\widehat f$ fails to be weakly compact. Thus \textnormal{(ii)} implies
	\textnormal{(i)}.
\end{proof}

\section{General results}

\subsection{Preparatory results}

\begin{thm}\label{CScomp}
Let $f \in \HL_0(B_X,Y)$ such that $f(B_X)\subset B_Y$. If 
\begin{enumerate}[$(i)$]
	\item $\overline{f(B_X)} \subset B_Y$, and 
	\item $\displaystyle L_f$ is relatively compact,
\end{enumerate} 
then $\widehat{f} : \G_0(B_X) \longrightarrow \G_0(B_Y)$ is compact.
\end{thm}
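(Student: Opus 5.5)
By Proposition~\ref{mol}, it suffices to show that $\widehat f(\mathcal M)$ is relatively norm compact in $\G_0(B_Y)$. So I will take an arbitrary sequence of molecules
$$m_n=\frac{\delta(f(x_n))-\delta(f(y_n))}{d_n},\qquad d_n:=\|x_n-y_n\|,$$
and extract a norm-convergent subsequence.

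By Lemma~\ref{lemma:Lf-to-fBX}, $\overline{f(B_X)}$ is compact. By hypothesis (i), this compact set is contained in $B_Y$. Passing to subsequences, I may therefore assume the following:
\begin{itemize}
\item[--] $a_n:=f(x_n)\to a$ in norm and $b_n:=f(y_n)\to b$ in norm, with $a,b\in B_Y$;
\item[--] $d_n\to d\in[0,2]$;
\item[--] $w_n:=(a_n-b_n)/d_n\to w$ in norm, which is possible since $w_n\in L_f$ and $L_f$ is relatively compact by (ii).
\end{itemize}
The argument then splits into two cases.

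\emph{Macro case $d>0$.} The map $\delta\colon B_Y\to\G_0(B_Y)$ is an isometry, so $\delta(a_n)\to\delta(a)$ and $\delta(b_n)\to\delta(b)$ in norm. Since $1/d_n\to 1/d$, it follows that $m_n\to(\delta(a)-\delta(b))/d$ in norm.

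\emph{Micro case $d=0$.} Here $\|a_n-b_n\|=d_n\|w_n\|\to0$, so $a=b$. Now $a_n,b_n\to a\in B_Y$, $d_n\to0$ and $w_n\to w$ in norm, which are exactly the hypotheses of the norm part of Lemma~\ref{Bayart} applied in $Y$. That lemma gives $m_n\to e_{a,w}$ in norm in $\G_0(B_Y)$.

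In both cases we obtain a norm-convergent subsequence, so $\widehat f(\mathcal M)$ is relatively compact and $\widehat f$ is compact.

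The main point to check is the micro case, since it is where hypothesis (i) is really needed. Lemma~\ref{Bayart} requires the common limit $a$ to lie in the open ball $B_Y$: the Cauchy estimate on $d^2h$ uses a compact convex set $K\subset B_Y$ containing the $a_n$, the $b_n$ and $a$, together with a margin $r>0$ such that $K+rB_Y\subset B_Y$. Hypothesis (i), combined with the compactness of $\overline{f(B_X)}$, guarantees $a\in B_Y$. I also need to confirm that the closed convex hull of a convergent sequence together with its limit is compact and lies in $B_Y$. This holds because that hull is contained in $\|a\|+\eta$ times the closed unit ball, for any $\eta>0$, once finitely many terms are discarded.
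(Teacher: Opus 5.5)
Your proof is correct and follows essentially the same route as the paper: you get relative compactness of $f(B_X)$ from Lemma~\ref{lemma:Lf-to-fBX}, extract norm limits $a,b\in B_Y$ and $d_n\to d$, dispose of the macro case directly, and in the micro case apply the norm part of Lemma~\ref{Bayart} after extracting $w_n\to w$ from the relative compactness of $L_f$. One small point in your final check: the compactness of the closed convex hull comes from Mazur's theorem, whereas your norm bound only shows that the hull lies in a closed ball of radius $<1$, which supplies the margin $r$.
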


\begin{proof}
First, thanks to Lemma~\ref{lemma:Lf-to-fBX}, $f(B_X)$ is relatively compact. By Proposition~\ref{mol}, one just need to show that 
$$\left\{\frac{\delta(f(x))-\delta(f(y))}{\|x-y\|} : x,y \in B_X, x \neq y\right\}$$
is relatively compact in $\G_0(B_Y)$. Pick $(x_n)$ and $(y_n)$ two sequences of $B_X$ with $x_n \neq y_n$. Denote, for all $n \in \mathbb{N}$, $$m_n=\frac{\delta(f(x_n))-\delta(f(y_n))}{\|x_n-y_n\|}\quad \text{ and }\quad  w_n=\frac{f(x_n)-f(y_n)}{\|x_n-y_n\|}.$$ 
Since $f(B_X)$ is relatively compact, extracting if necessary, we may assume that $(f(x_n))$ converges to $a \in B_Y$ and that $(f(y_n))$ converges to $b \in B_Y$. By a further extraction, we may assume that $(\|x_n-y_n\|)$ goes to $\rho \geq 0$. If $\rho>0$, then, $(m_n)$ converges to $\frac{\delta(a)-\delta(b)}{\rho}$. Hence, we may assume that $(\|x_n-y_n\|)$ goes to zero. This forces $a=b$ because $f$ is Lipschitz. With a last extraction, we may assume that there exists $w \in Y$ such that
$$\lim_{n \rightarrow +\infty} \frac{f(x_n)-f(y_n)}{\|x_n-y_n\|}=w.$$
We can now use Lemma \ref{Bayart} to conclude that 
$$\frac{\delta(f(x_n))-\delta(f(y_n))}{\|x_n-y_n\|} \underset{n \to \infty}{\longrightarrow} e_{a,w},$$
where $e_{a,w}(f)=df(a)(w)$, and that $\widehat{f}$ is compact.
\end{proof}

\begin{remark}
\label{rmkDf}	
Let
$$
L_f:=\left\{\frac{f(x)-f(y)}{\|x-y\|}:x,y\in B_X,\ x\neq y\right\}
\qquad\text{and}\qquad
D_f:=\{df(x)(u):x\in B_X,\ u\in S_X\}.
$$
Then $D_f\subset \overline{L_f}$, because for each $x\in B_X$ and $u\in S_X$ we have
$$
df(x)(u)=\lim_{t\to 0}\frac{f(x+tu)-f(x)}{t}\in \overline{L_f}.
$$
Conversely, for $x\neq y$ in $B_X$ we have the segment formula
$$
\frac{f(x)-f(y)}{\|x-y\|}
=\int_0^1 df\big(y+t(x-y)\big)\!\left(\frac{x-y}{\|x-y\|}\right)\,dt,
$$
so $L_f\subset \overline{\conv}(D_f)$. In particular, $\overline{L_f}\subset \overline{\conv}(D_f)$.
Therefore $L_f$ is relatively (weakly) compact if and only if $D_f$ is relatively (weakly) compact.
\end{remark}

In order to give some necessary conditions for compactness, let us introduce the barycenter map:

\begin{lemma}\label{ineq}
	The barycenter map $\beta_X\colon \G_0(B_X)\to X$, given by $\beta_X(\delta(u))=u$ for every $u\in B_X$, is a bounded operator with $\|\beta\|\le 1$.
	In particular, for $N \geq 1$, $x_1,\ldots ,x_N \in B_X, \; a_1,\ldots,a_N \in \mathbb{C}$, one has
	$$\left\|\sum_{k=1}^{N} a_k\delta(x_k)\right\|_{\G_0(B_X)} \geq \left\|\sum_{k=1}^{N} a_kx_k\right\|_X.$$
\end{lemma}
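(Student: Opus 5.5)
The natural route is the universal property of $\G_0(B_X)$ recalled in the introduction. The identity map $\mathrm{id}\colon B_X\to X$, $u\mapsto u$, is holomorphic (its differential is the identity at every point), vanishes at $0$, and is $1$-Lipschitz. So $\mathrm{id}\in \HL_0(B_X,X)$ with $\|\mathrm{id}\|_L=1$. The universal property then gives a unique bounded operator $\overline{\mathrm{id}}\colon \G_0(B_X)\to X$ with $\overline{\mathrm{id}}(\delta(u))=u$ for all $u\in B_X$ and $\|\overline{\mathrm{id}}\|=\|\mathrm{id}\|_L=1$. We set $\beta_X:=\overline{\mathrm{id}}$. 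Uniqueness is automatic: a bounded operator is determined by its values on $\{\delta(u):u\in B_X\}$, whose linear span is dense by definition of $\G_0(B_X)$.

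To avoid depending on the universal property, I would also give a direct duality argument. Fix $x^*\in X^*$ with $\|x^*\|\le 1$. Then $g:=x^*|_{B_X}$ is linear, hence holomorphic, satisfies $g(0)=0$, and has $\|g\|_L\le 1$; so $g\in \HL_0(B_X)$ with norm at most $1$. For a finite combination $\gamma=\sum_{k} a_k\delta(x_k)$ we get
$$
\Big|x^*\Big(\sum_k a_kx_k\Big)\Big|=|\langle g,\gamma\rangle|\le \|g\|_L\,\|\gamma\|_{\G_0(B_X)}\le \|\gamma\|.
$$
Taking the supremum over such $x^*$ and applying Hahn--Banach yields $\|\sum_k a_kx_k\|_X\le \|\sum_k a_k\delta(x_k)\|$. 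This is exactly the displayed inequality.

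The inequality shows that the linear map $\sum a_k\delta(x_k)\mapsto\sum a_kx_k$ is well defined on $\linsp\{\delta(u)\}$. Well-definedness also follows from the linear independence of the $\delta(u)$, and alternatively from the inequality itself, since a vanishing combination is sent to $0$. The map is $1$-bounded on this dense subspace, so it extends uniquely to $\G_0(B_X)$ with $\|\beta_X\|\le 1$.

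There is no real obstacle here. The only point needing care is checking that restrictions of norm-one linear functionals lie in the unit ball of $\HL_0(B_X)$, which is immediate.
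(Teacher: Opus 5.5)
Your proposal is correct, and your first paragraph is exactly the paper's proof: $\beta_X$ is defined as $\overline{\mathrm{id}_{B_X}}$ via the universal property, so $\|\beta_X\|=\|\mathrm{id}\|_L\le 1$. Your supplementary duality argument is also valid. It tests against restrictions to $B_X$ of functionals in the unit ball of $X^*$, which lie in the unit ball of $\HL_0(B_X)$, and so it just unpacks the universal property in this special case.
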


\begin{proof}
	Simply note that $\beta_X$ is the canonical extension of $Id_{B_X}: x \in B_X \mapsto x \in B_X $ using the universal extension property of $\G_0(B_X)$, that is $\beta_X = \overline{Id_{B_X}}$.
\end{proof}

We shall use in the sequel that $\beta$, as a bounded linear map, is also weak to weak continuous.

\begin{prop}\label{CNcomp}
	Let $f\in \HL_0(B_X,Y)$.
	If $\widehat f$ is (weakly) compact, then $L_f$ is relatively (weakly) compact
	in $Y$. In particular, $\overline{f(B_X)}$ is (weakly) compact. 
\end{prop}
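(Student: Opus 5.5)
The plan is to push the (weak) compactness of $\widehat f(\mathcal M)$ down to $Y$ through the barycenter map of $B_Y$. By Proposition~\ref{mol}, if $\widehat f$ is (weakly) compact, then the set
$$
\widehat f(\mathcal M)=\left\{\frac{\delta(f(x))-\delta(f(y))}{\|x-y\|}: x\neq y\in B_X\right\}
$$
is relatively (weakly) compact in $\G_0(B_Y)$. Lemma~\ref{ineq} gives the bounded linear operator $\beta_Y\colon \G_0(B_Y)\to Y$, which satisfies $\beta_Y(\delta(u))=u$. By linearity,
$$
\beta_Y\!\left(\frac{\delta(f(x))-\delta(f(y))}{\|x-y\|}\right)=\frac{f(x)-f(y)}{\|x-y\|},
$$
and hence $\beta_Y(\widehat f(\mathcal M))=L_f$.

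\emph{Compactness of $L_f$.} A bounded operator is norm--norm continuous, so it maps relatively compact sets to relatively compact sets. Indeed, $\beta_Y(\overline{\widehat f(\mathcal M)})$ is compact and contains $L_f$.

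\emph{Weak compactness of $L_f$.} As noted after Lemma~\ref{ineq}, $\beta_Y$ is also weak-to-weak continuous. So the image of the weakly compact set $\overline{\widehat f(\mathcal M)}^{w}$ is weakly compact in $Y$, and it contains $L_f$. Therefore $L_f$ is relatively weakly compact.

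\emph{The ``in particular'' part.} Lemma~\ref{lemma:Lf-to-fBX} shows that $f(B_X)$ is relatively (weakly) compact, so $\overline{f(B_X)}$ is compact in the norm case. In the weak case, the weak closure of $f(B_X)$ is weakly compact. Since $f(B_X)$ is bounded and the norm closure of a set is contained in its weak closure, and since the norm closure of $f(B_X)$ is convex-closure-independent (it is norm closed, hence weakly closed only if convex — this needs care), I would instead simply argue as follows. The norm closure $\overline{f(B_X)}$ is a norm-closed subset of a weakly compact set. By Eberlein--\v{S}mulian, every sequence in it has a weakly convergent subsequence. I would read ``$\overline{f(B_X)}$ is weakly compact'' in the sense of relative weak compactness, or equivalently state it for the weak closure.

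There is no substantial obstacle. The only subtle point is this final interpretation of weak compactness of the norm closure, which I would resolve by phrasing the conclusion for the weak closure.
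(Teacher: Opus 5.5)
Your proposal is correct and is essentially the paper's proof: you push the relatively (weakly) compact set $\widehat f(\mathcal M)$ through the norm-continuous and weak-to-weak continuous barycenter map $\beta_Y$, note that $\beta_Y(\widehat f(\mathcal M))=L_f$, and conclude with Lemma~\ref{lemma:Lf-to-fBX}. Your caveat on the weak case of the last assertion is legitimate, since the norm closure of a relatively weakly compact set need not be weakly compact (e.g.\ $\{e_n\}$ in $\ell_2$); that assertion should therefore be read as relative weak compactness of $f(B_X)$, i.e.\ weak compactness of its weak closure, which is exactly what the argument yields.
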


\begin{proof}
Assume that $\widehat f$ is (weakly) compact. Since $\Mol$ is bounded, its image
			$\widehat f(\Mol)$ is relatively (weakly) compact in $\G_0(B_Y)$.
			The barycenter map $\beta_Y$ is norm (weak-to-weak) continuous; therefore
			$\beta_Y(\widehat f(\Mol))$ is relatively (weakly) compact in $Y$.
			Using $L_f=\beta_Y\big(\widehat f(\Mol(B_X))\big)$, we conclude that $L_f$ is relatively (weakly) compact. The last assertion follows from Lemma~\ref{lemma:Lf-to-fBX}.
\end{proof}

If we suppose that $\hat f$ is a compact operator, we obtain further informations on the image of $f(B_X)$.

\begin{thm}\label{CNcomp2}
	Let $f \in \HL_0(B_X,Y)$ such that $f(B_X) \subset B_Y$. Assume that $\widehat{f} : \G_0(B_X) \to \G_0(B_Y)$ is compact. Then
	for all $y^* \in S_{Y^*},$ for all sequences $(z_j)\subset B_X$ such that  $y^* \circ f(z_j) \to 1$, then $ \|y^* \circ df(z_j)\|\to 0$.
	In particular,  $\overline{f(B_X)} \subset B_Y$.
\end{thm}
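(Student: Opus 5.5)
We argue by contradiction. Assume $\widehat f$ is compact, but there are $y^*\in S_{Y^*}$, a sequence $(z_j)\subset B_X$ with $y^*(f(z_j))\to 1$, and $\eta>0$ such that, after passing to a subsequence, $\|y^*\circ df(z_j)\|\ge \eta$ for all $j$. The final assertion $\overline{f(B_X)}\subset B_Y$ then follows at once from the implication \ref{p2}$\implies$\ref{p3} of Proposition~\ref{prop-images-relation}.

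\textbf{Step 1: reduction to derivative functionals.} Choose $u_j\in S_X$ with $|y^*(df(z_j)(u_j))|\ge \eta/2$. Set $a_j:=f(z_j)\in B_Y$ and $v_j:=df(z_j)(u_j)\in Y$. For fixed $j$ and real $t\to 0^+$ we have $\frac{f(z_j+tu_j)-f(z_j)}{t}\to v_j$ in norm. Apply the norm part of Lemma~\ref{Bayart} in $\G_0(B_Y)$ with both endpoint sequences converging to $a_j$ and $d_n=t_n$. This shows that the functional $e_{a_j,v_j}\colon g\mapsto dg(a_j)(v_j)$ is a norm limit of elements $\widehat f\big(\frac{\delta(z_j+t_nu_j)-\delta(z_j)}{t_n}\big)\in \widehat f(\Mol)$. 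Since $\widehat f(\Mol)$ is relatively compact by Proposition~\ref{mol}, the sequence $(e_{a_j,v_j})_j$ has a norm-convergent subsequence, say converging to some $e\in \G_0(B_Y)$.

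\textbf{Step 2: test functions.} For $k\ge 1$ put $g_k(y):=\frac1k\,y^*(y)^k$. Then $g_k\in\HL_0(B_Y)$ and $dg_k(y)=y^*(y)^{k-1}y^*$. Since $|y^*(y)|<1$ on $B_Y$, we get $\|g_k\|_L\le 1$. We also have
$$\langle g_k, e_{a_j,v_j}\rangle = y^*(a_j)^{k-1}\,y^*(v_j).$$
Norm convergence gives uniform convergence on the family $\{g_k\}$. So there is $j_0$ such that for $j\ge j_0$ and all $k$,
$$\big|y^*(a_j)^{k-1}y^*(v_j)-\langle g_k,e\rangle\big|<\eta/6.$$

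\textbf{Step 3: contradiction.} First fix $j=j_0$. Since $|y^*(a_{j_0})|<1$ and $|y^*(v_{j_0})|\le\|f\|_L$, letting $k\to\infty$ gives $\limsup_k|\langle g_k,e\rangle|\le \eta/6$. Next fix $k$ and let $j\to\infty$ along the subsequence. Since $y^*(a_j)\to1$ and $|y^*(v_j)|\ge\eta/2$, we obtain $|\langle g_k,e\rangle|\ge \eta/2-\eta/6=\eta/3$ for every $k$. These two bounds contradict each other.

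The main point needing care is Step 1: identifying the derivative functionals $e_{a_j,v_j}$ as elements of $\overline{\widehat f(\Mol)}$. Lemma~\ref{Bayart} handles this directly, since the relevant points stay in the compact segment near $a_j$ inside $B_Y$ for fixed $j$.
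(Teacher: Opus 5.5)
Your proof is correct, but it uses compactness in a different place than the paper does.

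The paper passes to the adjoint. By Schauder's theorem $C_f$ is compact on $\HL_0(B_Y)$, and the paper tests it on the bounded sequence $h_j=\frac{1}{n_j+1}(y^*)^{n_j+1}$, with the exponent $n_j$ tied to $j$ so that $|y^*(f(z_j))|^{n_j}\ge\frac12$. Any norm limit of $(h_j\circ f)$ must be $0$, since $h_j\circ f\to 0$ pointwise. This is incompatible with the chain-rule bound $\|h_j\circ f\|_L\ge\|d(h_j\circ f)(z_j)\|\ge\varepsilon/2$.

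You stay on the predual side instead. Lemma~\ref{Bayart} places the derivative functionals $e_{f(z_j),\,df(z_j)u_j}$ in the norm closure of $\widehat f(\Mol)$, and you extract a norm-convergent subsequence there. The contradiction comes from the array $\langle g_k,e_{a_j,v_j}\rangle=y^*(a_j)^{k-1}y^*(v_j)$, whose two iterated limits are incompatible. For fixed $j$ it tends to $0$ as $k\to\infty$, while for fixed $k$ its modulus tends to at least $\eta/2$ as $j\to\infty$. Uniform convergence in $k$ forbids this. The test functions are essentially the same in both proofs.

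The paper's argument is shorter and needs only Schauder's theorem and first-order calculus. Yours avoids the adjoint $C_f$ altogether and gives the by-product $e_{f(z),df(z)u}\in\overline{\widehat f(\Mol)}$. The price is the norm part of Lemma~\ref{Bayart}, which relies on Cauchy estimates for second derivatives.

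That cost is avoidable. You can work directly with the molecule images $\mu_j:=\widehat f\big(\frac{\delta(z_j+t_ju_j)-\delta(z_j)}{t_j}\big)$. Choose $t_j$ so small that $|\langle g_k,\mu_j\rangle-y^*(a_j)^{k-1}y^*(v_j)|<1/j$ for all $k\le j$. For each fixed $j$ you still have $|\langle g_k,\mu_j\rangle|\le 2/(kt_j)\to 0$ as $k\to\infty$, and the same double-limit argument then goes through.
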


\begin{proof}
		Since $\widehat f$ is compact, Schauder's theorem implies that its adjoint $C_f$ is compact on $\HL_0(B_Y)$.

		Assume that there exist $y^*\in S_{Y^*}$, a sequence $(z_j)\subset B_X$ and $\varepsilon>0$ such that
		$$
		y^*(f(z_j))\xrightarrow[j\to\infty]{}1
		\qquad\text{and}\qquad
		\|y^*\circ df(z_j)\|\ge \varepsilon\quad\text{for all }j.
		$$
		Choose integers $n_j\to\infty$ so that $|y^*(f(z_j))|^{n_j}\ge \tfrac12$ for all large $j$.
		Define
		$$
		h_j(z):=\frac{\langle y^*,z\rangle^{\,n_j+1}}{n_j+1},\qquad z\in B_Y.
		$$
		Then $h_j\in \HL_0(B_Y)$ and
		$$
		dh_j(z)(\cdot)=\langle y^*,z\rangle^{\,n_j}\langle y^*,\cdot\rangle,
		\qquad\text{hence}\qquad
		\|h_j\|_L=\sup_{z\in B_Y}\|dh_j(z)\|\le 1.
		$$
		Therefore $(h_j)$ is bounded in $\HL_0(B_Y)$, and since $C_f$ is compact we may assume (after extracting) that
		$h_j\circ f$ converges in $\HL_0(B_X)$. For each fixed $x\in B_X$, we have $|y^*(f(x))|<1$, hence
		$h_j(f(x))\to 0$, and therefore $h_j\circ f\to 0$ in $\HL_0(B_X)$.
		
		On the other hand,
		$$
		\|h_j\circ f\|_L\ge \|d(h_j\circ f)(z_j)\|.
		$$
		By the chain rule,
		$$
		d(h_j\circ f)(z_j)=dh_j(f(z_j))\circ df(z_j)=\langle y^*,f(z_j)\rangle^{\,n_j}\, (y^*\circ df(z_j)),
		$$
		and thus
		$$
		\|h_j\circ f\|_L\ge |\langle y^*,f(z_j)\rangle|^{n_j}\,\|y^*\circ df(z_j)\|
		\ge \frac{\varepsilon}{2}
		$$
		for all large $j$. This contradicts $h_j\circ f\to 0$ in $\HL_0(B_X)$. Hence the first assertion holds.
		
The second assertion follows from Proposition \ref{prop-images-relation}.
\end{proof}

\subsection{When \texorpdfstring{$\dim(Y)<\infty$}{dim(Y)<infty} and consequences}

When the range space of $f$ is finite dimensional, one has the equivalence between compactness and weak compactness of $\widehat{f}$. 

\begin{cor}\label{cor:Finite-dim-range}
	Let $f\in \HL_0(B_X,Y)$ satisfy $f(B_X)\subset B_{Y}$ and $\dim(Y)<\infty$. Then the following assertions are equivalent:
	\begin{enumerate}[(i)]
		\item $\widehat f\colon \G_0(B_X)\to \G_0(B_{Y})$ is compact;
		\item $\widehat f\colon \G_0(B_X)\to \G_0(B_{Y})$ is weakly compact;
		\item $\widehat f\colon \G_0(B_X)\to \G_0(B_{Y})$ is super strictly singular;
		\item $\widehat f\colon \G_0(B_X)\to \G_0(B_{Y})$ is strictly singular;
		\item $\widehat f$ does not fix any copy of $\ell_1$.
	\end{enumerate}
\end{cor}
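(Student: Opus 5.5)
The plan is to reduce to Theorem~\ref{thm:main} by embedding the finite-dimensional target $Y$ into $\ell_1$. The standard implications (i)$\Rightarrow$(iii)$\Rightarrow$(iv)$\Rightarrow$(v) and (i)$\Rightarrow$(ii) hold for any operator. So it suffices to show that (v)$\Rightarrow$(ii) and (ii)$\Rightarrow$(i) when $\dim Y<\infty$.

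\textbf{Step 1: renorming to $\ell_1^n$.} Let $n=\dim Y$. Since $\G_0(B_Y)$ depends on the norm of $Y$ through the unit ball, we cannot just replace $Y$ by an isomorphic copy, so we argue directly with $\ell_1$-type test functions.

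\textbf{Step 2: necessary conditions from weak compactness.} Assume $\widehat f$ is weakly compact, or more generally does not fix $\ell_1$. Proposition~\ref{CNcomp} gives that $L_f$ is relatively weakly compact, hence bounded. Since $\dim Y<\infty$, $L_f$ is relatively norm compact. It then remains to show $\overline{f(B_X)}\subset B_Y$; once this holds, Theorem~\ref{CScomp} yields compactness. Note that $\overline{f(B_X)}$ is compact because $Y$ is finite dimensional, so (P3) is equivalent to (P1) by Proposition~\ref{prop-images-relation}.

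\textbf{Step 3: the main obstacle, showing $\overline{f(B_X)}\subset B_Y$ when $\widehat f$ does not fix $\ell_1$.} I argue by contradiction. Suppose $y\in\overline{f(B_X)}\cap S_Y$ and choose $y^*\in S_{Y^*}$ with $y^*(y)=1$. Set $g=y^*\circ f\in\HL_0(B_X)$, which maps into $\D$ with $1\in\overline{g(B_X)}$. Lemma~\ref{contp4} provides $(z_n)$ with $g(z_n)\to1$ and $\|dg(z_n)\|\ge 1/2$. Choose $u_n\in S_X$ with $|dg(z_n)(u_n)|\ge1/2$ and $t_n\to0$ fast. Consider the molecules
\[
m_n=\frac{\delta(f(z_n+t_nu_n))-\delta(f(z_n))}{t_n}\in\widehat f(\Mol).
\]
Passing to a subsequence, $f(z_n)\to a$ with $y^*(a)=1$ and $w_n\to w$ in the compact set $\overline{L_f}$, where $|y^*(w)|\ge1/2$. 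The goal is to show $(m_n)$ has an $\ell_1$ subsequence, using the test functions $h_k(v)=\langle y^*,v\rangle^{N_k}/N_k$ as in Theorem~\ref{CNcomp2}. We have
\[
\langle h_k,m_n\rangle\approx \langle y^*,f(z_n)\rangle^{N_k-1}\,y^*(w_n).
\]
Choosing $N_k$ increasing fast and $n_k$ with $|y^*(f(z_{n_k}))|^{N_k}\ge 1/2$, but $|y^*(f(z_{n_j}))|^{N_k}$ small for $j<k$, produces a triangular matrix. Combined with phases $s_k$ and the summed function
\[
F=\sum_k s_k\,\frac{\langle y^*,\cdot\rangle^{N_k}}{N_k},
\]
this should give the lower $\ell_1$ estimate \eqref{eq:lower-l1-plan}. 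The difficulty is bounding $\|F\|_L$, since
\[
dF(v)=\Big(\sum_k s_k\langle y^*,v\rangle^{N_k-1}\Big)y^*
\]
is unbounded near $|y^*(v)|=1$. Instead, I would factor through the scalar case: $\widehat{y^*}\colon\G_0(B_Y)\to\G_0(\D)$ and $\widehat g=\widehat{y^*}\circ\widehat f$. The space $\G_0(\D)$ is a predual of the Bloch-type space $\HL_0(\D)$ (derivative in $H^\infty$), and $\widehat g$ does not fix $\ell_1$ either. Thus it suffices to treat $Y=\C$, which is $\ell_1^1\subset\ell_1$. Viewing $g$ as an $\ell_1$-valued map with one coordinate, Theorem~\ref{thm:main} and its case analysis apply. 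Its non-compactness conclusion, Theorem~\ref{CNcomp2}, gives $\overline{g(B_X)}\subset\D$, contradicting $1\in\overline{g(B_X)}$.

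\textbf{Conclusion and the remaining gap.} The reduction needs $\widehat{g}$, viewed into $\G_0(B_{\ell_1})$, to equal $\widehat\iota\circ\widehat g$ with $\iota\colon\D\to B_{\ell_1}$ the first-coordinate embedding. It also needs that $\widehat\iota\circ\widehat g$ not fixing $\ell_1$ follows from $\widehat f$ not fixing $\ell_1$, which holds since fixing $\ell_1$ passes to factors on the left. Theorem~\ref{thm:main} then gives that $\widehat\iota\circ\widehat g$ is compact. Since $\widehat\iota$ is an isometric embedding (via the norm-one projection $P_1$, with $\widehat{P_1}\widehat\iota=\mathrm{Id}$), $\widehat g$ is compact. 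Theorem~\ref{CNcomp2} then gives the contradiction. This establishes (P3), hence (P1), and Theorem~\ref{CScomp} gives compactness.
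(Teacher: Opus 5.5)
Your proposal is correct and is essentially the paper's proof run contrapositively: both pass to the scalar map $g=y^*\circ f$ with $1\in\overline{g(B_X)}$, embed $\C$ into $\ell_1$ as the first coordinate, and invoke Theorem~\ref{thm:main}. Both then use that fixing $\ell_1$ passes to the rightmost factor of $\widehat\iota\circ\widehat{y^*}\circ\widehat f$, together with Theorem~\ref{CNcomp2} and Lemma~\ref{contp4}. Your Step~1 and the abandoned direct test-function attempt are unnecessary, and under (v) the relative compactness of $L_f$ follows from its boundedness ($\|w\|\le\|f\|_L$ in finite dimensions) rather than from Proposition~\ref{CNcomp}, which requires weak compactness.
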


\begin{proof}
We only have to prove $(v) \implies (i)$. Assume that $\widehat f$ is not compact. Since $Y$ is finite dimensional, $L_f$ is relatively compact. Hence, by Theorem~\ref{CScomp}, we necessarily have $\overline{f(B_X)}\cap S_Y\neq\varnothing$. Let $y\in\overline{f(B_X)}\cap S_Y$ and choose $y^*\in S_{Y^*}$ such that $y^*(y)=1$. Set $g:=y^*\circ f\in \HL_0(B_X,\mathbb C)$. Then $1\in\overline{g(B_X)}$. We claim that $\widehat g$ is not compact. Indeed, otherwise Theorem~\ref{CNcomp2} would imply property {\rm (P3)} for $g$, contradicting Lemma~\ref{contp4}.

Now consider the canonical isometric embedding $J:\lambda \in \C \mapsto \lambda e_1 \in \ell_1$ and set $h:=J\circ g$. Then $h\in \HL(B_X , B_{\ell_1})$. Let $P:\ell_1\to\mathbb C$ be the first-coordinate projection. Since $P\circ J=Id_{\mathbb C}$, from $P \circ h = P \circ J \circ g = g$ we deduce $\widehat g=\widehat P\circ\widehat h.$ Therefore $\widehat h$ is not compact. By Theorem~\ref{thm:main}, $\widehat h$ fixes a copy of $\ell_1$. Since $\widehat h = \widehat J\circ\widehat{y^*}\circ\widehat f$, it follows that $\widehat f$ itself fixes a copy of $\ell_1$.
\end{proof}

We derive the following useful result.

\begin{proposition}\label{lem:wkcomp-strict-contraction}
		Let $f\in \HL_0(B_X,Y)$ satisfy $f(B_X)\subset B_Y$ and assume that $\widehat f$ is weakly compact.
		Then $\overline{f(B_X)} \subset B_Y$. In particular, we obtain the same conclusion if $\widehat{f}$ is compact.
\end{proposition}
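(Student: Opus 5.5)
We argue by contradiction and reduce to the scalar case, where Corollary~\ref{cor:Finite-dim-range} applies. Suppose $\widehat f$ is weakly compact but $\overline{f(B_X)}\cap S_Y\neq\varnothing$. Pick $y\in\overline{f(B_X)}\cap S_Y$ and, by Hahn--Banach, $y^*\in S_{Y^*}$ with $y^*(y)=1$. Set $g:=y^*\circ f\in\HL_0(B_X,\C)$. Since $\|y^*\|=1$ and $f(B_X)\subset B_Y$, we have $g(B_X)\subset\D$, and $1\in\overline{g(B_X)}$.

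The first step is to show that $\widehat g$ is weakly compact. Since $y^*$ is linear of norm one, it belongs to $\HL_0(B_Y,\C)$ and maps $B_Y$ into $\D$, so $\widehat{y^*}\colon\G_0(B_Y)\to\G_0(\D)$ is a bounded operator. Uniqueness in the defining relation $\delta\circ g=\widehat g\circ\delta$ gives $\widehat g=\widehat{y^*}\circ\widehat f$. A composition of a weakly compact operator with a bounded one is weakly compact, so $\widehat g$ is weakly compact.

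The second step uses that the range of $g$ is finite dimensional. Corollary~\ref{cor:Finite-dim-range} then upgrades weak compactness of $\widehat g$ to compactness. Theorem~\ref{CNcomp2}, applied to $g$ with the functional $1\in S_{\C^*}$, yields that for every sequence $(z_j)\subset B_X$ with $g(z_j)\to1$ we have $\|dg(z_j)\|\to0$. But Lemma~\ref{contp4}, applied to $g$ (valid since $g(B_X)\subset\D$ and $1\in\overline{g(B_X)}$), produces a sequence $(z_n)$ with $g(z_n)\to1$ and $\liminf\|dg(z_n)\|\ge1$. This is a contradiction, so $\overline{f(B_X)}\subset B_Y$. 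The final assertion about compact $\widehat f$ follows because compact operators are weakly compact; alternatively, it is already contained in Theorem~\ref{CNcomp2}.

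The main point needing care is the factorization $\widehat g=\widehat{y^*}\circ\widehat f$. It reduces to checking the identity on the elements $\delta(x)$, $x\in B_X$, and extending by linearity, continuity and density; this is routine. The real work is hidden in Corollary~\ref{cor:Finite-dim-range}, which in turn relies on the $\ell_1$-range analysis of Theorem~\ref{thm:main}.
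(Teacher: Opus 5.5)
Your proof is correct and is essentially the paper's own argument. You pass to $g=y^*\circ f$, factor $\widehat g=\widehat{y^*}\circ\widehat f$ to get weak compactness, upgrade it to compactness with Corollary~\ref{cor:Finite-dim-range}, and then let Theorem~\ref{CNcomp2} (with the functional $1$) collide with Lemma~\ref{contp4}. As you anticipated, everything rests on Corollary~\ref{cor:Finite-dim-range}, and one caveat applies: for the map $J\circ g$ used in its proof, all endpoints and slopes lie in $\C e_1$, so non-compactness can only come from molecules whose endpoints converge to a point of $S_{\ell_1}$. That is exactly the micro-regime situation which the case analysis behind Theorem~\ref{thm:main} dismisses with Lemma~\ref{Bayart}, although that lemma requires the common limit to lie in the open ball.
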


\begin{proof}
Assume that $\widehat{f}$ is weakly compact and $\overline{f(B_X)} \cap S_Y \neq \emptyset$. Let $y \in \overline{f(B_X)} \cap S_Y$ and let $y^* \in Y^*$ such that $\langle y^*,y\rangle=1$. Consider $g=y^* \circ f$. Then $g \in HL_0(B_X, \mathbb{C})$ and $g(B_X) \subset \mathbb{D}$. We also have that $\widehat{g}=\widehat{y^*} \circ \widehat{f}$, and since $\widehat{f}$ is weakly compact, so is $\widehat{g}$. Since $\mathbb{C}$ is finite dimensional, using Theorem \ref{thmC} (see Corollary \ref{cor:Finite-dim-range}), $\widehat{g}$ is actually compact. By Theorem \ref{CNcomp2}, and using the fact that the complex dual of $\mathbb{C}$ is exactly the set of all the linear maps of the form $z \longmapsto az$, $a \in \mathbb{C}$, we have that:
$$\forall a \in \mathbb{C}, \forall (z_j) \subset B_X, ag(z_j) \underset{j \rightarrow +\infty}{\longrightarrow} 1 \implies \|adg(z_j)\| \underset{j \rightarrow +\infty}{\longrightarrow} 0.$$
Taking $a=1$, we get:
$$\forall (z_j) \subset B_X, y^* \circ f(z_j) \underset{j \rightarrow +\infty}{\longrightarrow} 1 \implies \|y^* \circ df(z_j)\| \underset{j \rightarrow +\infty}{\longrightarrow} 0.$$
But now, apply Lemma \ref{contp4} to get a contradiction.
\end{proof}

The following result has already been obtained (independently) in \cite[Corollary 4.15]{LuisCarlos}. It is straightforward thanks to Theorem \ref{CNcomp2} and Theorem \ref{CScomp}.

\begin{cor}\label{dimfinie}
Let $f \in \HL_0(B_X,Y)$ satisfy $f(B_X)\subset B_Y$. Assume that $\dim(X)<\infty$ and $\dim(Y)<\infty$. Then $\widehat{f}$ is compact if and only if $\|f\|_{\infty}<1$.
\end{cor}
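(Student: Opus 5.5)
We prove the two implications separately, using Theorem~\ref{CNcomp2} for necessity and Theorem~\ref{CScomp} for sufficiency. The only extra input is that finite dimensionality of $X$ and $Y$ makes the compactness hypotheses on $L_f$ and $\overline{f(B_X)}$ automatic.

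\emph{Necessity.} Assume $\widehat f$ is compact. By Theorem~\ref{CNcomp2}, property \ref{p2} holds, and in particular $\overline{f(B_X)}\subset B_Y$, which is \ref{p3}. Since $\dim(Y)<\infty$ and $f$ is Lipschitz with $f(0)=0$, the set $f(B_X)$ is bounded, so $\overline{f(B_X)}$ is norm compact. Proposition~\ref{prop-images-relation} states that \ref{p1} and \ref{p3} are equivalent when $\overline{f(B_X)}$ is compact, so $\|f\|_\infty<1$. Alternatively, $\dim(X)<\infty$ alone gives that all of \ref{p1}--\ref{p5} are equivalent, which yields the same conclusion.

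\emph{Sufficiency.} Assume $\|f\|_\infty<1$. Then $\overline{f(B_X)}\subset \|f\|_\infty\overline{B_Y}\subset B_Y$, so hypothesis $(i)$ of Theorem~\ref{CScomp} holds. For hypothesis $(ii)$, every element of $L_f$ has norm at most $\|f\|_L$, so $L_f$ is a bounded subset of the finite-dimensional space $Y$. By Heine--Borel it is relatively compact, and Theorem~\ref{CScomp} then gives that $\widehat f$ is compact.

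Neither step presents a real obstacle. The only point to check is that finite dimensionality of $Y$ makes $L_f$ automatically relatively compact, which reduces the characterization in Theorem~\ref{CScomp} to the single condition on $f(B_X)$. The hypothesis $\dim(X)<\infty$ is not even needed for the argument above. It only offers a second route for necessity, through the equivalence of \ref{p1} and \ref{p4}.
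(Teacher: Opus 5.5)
Your proof is correct and follows the paper's route: Theorem~\ref{CNcomp2} together with Proposition~\ref{prop-images-relation} gives necessity, and Theorem~\ref{CScomp}, with $L_f$ automatically relatively compact in finite-dimensional $Y$, gives sufficiency. Your remark that $\dim(X)<\infty$ is not needed is also right, and is consistent with Theorem~\ref{caractcomp}.
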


\subsection{Metric characterization of compactness}

One can now prove the metric characterization of compactness that constitutes Theorem \ref{thmD}. We recall its statement:

\begin{theorem}\label{caractcomp}
Let $X, Y$ be complex Banach spaces and $f \in HL_0(B_X,Y)$ such that $f(B_X) \subset B_Y$. Then: 
$$\widehat{f} : \G_0(B_X) \longrightarrow \G_0(B_Y) \text{ is compact } \iff \overline{L_f} \text{ is compact and } \|f\|_{\infty}<1.$$
\end{theorem}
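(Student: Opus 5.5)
The proof splits into the two implications, each assembled from results already established.

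For ($\Rightarrow$), assume $\widehat f$ is compact. Proposition~\ref{CNcomp} gives that $L_f$ is relatively compact, so $\overline{L_f}$ is compact. The same proposition, via Lemma~\ref{lemma:Lf-to-fBX}, shows that $\overline{f(B_X)}$ is norm compact in $Y$. Theorem~\ref{CNcomp2} (or Proposition~\ref{lem:wkcomp-strict-contraction}) then gives $\overline{f(B_X)}\subset B_Y$, which is property (P3). Since $\overline{f(B_X)}$ is norm compact, Proposition~\ref{prop-images-relation} yields (P3)$\iff$(P1). Hence $\|f\|_\infty<1$. Concretely, the norm attains its maximum on the compact set $\overline{f(B_X)}$, and that maximum is $<1$.

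For ($\Leftarrow$), assume $\overline{L_f}$ is compact and $\|f\|_\infty<1$. Then $\overline{f(B_X)}\subset \|f\|_\infty\overline{B_Y}\subset B_Y$, so hypothesis (i) of Theorem~\ref{CScomp} holds. Hypothesis (ii) is exactly the relative compactness of $L_f$. Theorem~\ref{CScomp} therefore gives compactness of $\widehat f$.

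No step is a serious obstacle, since all the analytic work has already been done. The only point needing care is the passage from (P3) to (P1) in the forward direction. This requires norm compactness of $\overline{f(B_X)}$, and that is precisely what Lemma~\ref{lemma:Lf-to-fBX} supplies once $L_f$ is known to be relatively compact. Without compactness the implication fails, as the diagonal example in Proposition~\ref{prop-images-relation} shows.
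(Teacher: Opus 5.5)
Your proof is correct and matches the paper's argument: Proposition~\ref{CNcomp} with Lemma~\ref{lemma:Lf-to-fBX} gives compactness of $\overline{L_f}$ and $\overline{f(B_X)}$, then (P3) together with the compactness clause of Proposition~\ref{prop-images-relation} gives $\|f\|_\infty<1$, and Theorem~\ref{CScomp} gives the converse. The only difference is minor: the paper obtains (P3) via Proposition~\ref{lem:wkcomp-strict-contraction}, which passes through Theorem~\ref{thmC} and the $\ell_1$ analysis, whereas your primary citation of Theorem~\ref{CNcomp2} (giving (P2), hence (P3)) is more direct.
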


\begin{proof}
Assume that $\widehat{f}$ is compact, then $\widehat{f}$ is weakly compact and by Proposition \ref{lem:wkcomp-strict-contraction}, we have that $\overline{f(B_X)} \subset B_Y$. Moreover, Proposition \ref{CNcomp} ensures that $\overline{L_f}$ is compact, hence $\overline{f(B_X)}$ is compact (Lemma \ref{lemma:Lf-to-fBX}) , and so $\|f\|_{\infty}<1$ (see Proposition \ref{prop-images-relation}).

Conversely, if $\|f\|_{\infty}<1$, then $\overline{f(B_X)} \subset B_Y$ and just apply Theorem \ref{CScomp}.
    
\end{proof}

\subsection{Metric characterization of weak compactness}

Let us start with the following sufficient condition for weak compactness:

\begin{thm}\label{CSwcomp}
	Let $X,Y$ Banach spaces with $Y$ reflexive. Let $f \in \HL_0(B_X,Y)$ such that $f(B_X) \subset B_Y$. Assume that:
	\begin{enumerate}
		\item[(i)] $\overline{f(B_X)} \subset B_Y$, and
		\item[(ii)] $f(B_X)$ is relatively compact.
	\end{enumerate}
	Then, $\widehat{f}$ is weakly compact.
\end{thm}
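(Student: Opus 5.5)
By Proposition~\ref{mol}, it suffices to show that $\widehat f(\Mol)$ is relatively weakly compact in $\G_0(B_Y)$. By the Eberlein--\v{S}mulian theorem this reduces to a sequential statement. Take sequences $(x_n),(y_n)$ in $B_X$ with $x_n\neq y_n$, and put
$$m_n=\frac{\delta(f(x_n))-\delta(f(y_n))}{\|x_n-y_n\|},\qquad w_n=\frac{f(x_n)-f(y_n)}{\|x_n-y_n\|}.$$
We must extract a weakly convergent subsequence of $(m_n)$. The plan follows the proof of Theorem~\ref{CScomp}, with norm convergence of the slopes replaced by weak convergence, and with Lemma~\ref{Bayart} used in its weak form.

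\emph{Extraction.} By (ii), after passing to a subsequence we may assume $f(x_n)\to a$ and $f(y_n)\to b$ in norm. By (i), both limits $a,b$ lie in the open ball $B_Y$. Passing to a further subsequence, $d_n:=\|x_n-y_n\|\to\rho\in[0,2]$.

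\emph{Case $\rho>0$.} Since $\delta$ is an isometry, $\delta(f(x_n))\to\delta(a)$ and $\delta(f(y_n))\to\delta(b)$ in norm. Hence
$$m_n\to\frac{\delta(a)-\delta(b)}{\rho}$$
in norm, and in particular weakly.

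\emph{Case $\rho=0$.} Since $\|f(x_n)-f(y_n)\|\le\|f\|_L d_n\to 0$, the two limits coincide, $a=b\in B_Y$. The sequence $(w_n)$ is bounded by $\|f\|_L$. Because $Y$ is reflexive, we may extract a subsequence with $w_n\rightharpoonup w$ weakly in $Y$. Lemma~\ref{Bayart}, applied in $\G_0(B_Y)$ to the sequences $f(x_n)$, $f(y_n)$, the numbers $d_n$ and this $w$, then gives
$$m_n\rightharpoonup e_{a,w}\quad\text{weakly in }\G_0(B_Y).$$
This is exactly where (i) enters: the lemma requires the common limit $a$ to lie in the open ball $B_Y$.

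Hence every sequence in $\widehat f(\Mol)$ has a weakly convergent subsequence. By Eberlein--\v{S}mulian, $\widehat f(\Mol)$ is relatively weakly compact, so $\widehat f$ is weakly compact.

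One point needs checking. In Lemma~\ref{Bayart} the limit $e_{a,w}$ is a priori only a functional on $\HL_0(B_Y)$, so the lemma gives weak$^*$-type convergence. For Eberlein--\v{S}mulian we need the limit to belong to $\G_0(B_Y)$ and the convergence to hold against all of $\G_0(B_Y)^*=\HL_0(B_Y)$. This is fine: the functionals $\langle h,\cdot\rangle$ for $h\in\HL_0(B_Y)$ are exactly the dual of $\G_0(B_Y)$, so convergence against them is weak convergence. The limit lies in $\G_0(B_Y)$ because it is the norm limit of the difference quotients $(\delta(a+tw)-\delta(a))/t$, by the norm part of Lemma~\ref{Bayart}. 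Apart from this verification the argument is routine, and the essential ingredients are reflexivity of $Y$ (to extract weakly convergent slopes) and condition (i).
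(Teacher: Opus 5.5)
Your proof is correct and follows the paper's argument essentially verbatim: you extract norm-convergent endpoints using (i)--(ii) and weakly convergent slopes using reflexivity, then conclude by norm convergence when $d>0$ and by the weak part of Lemma~\ref{Bayart} when $d=0$. Your two additions make explicit points the paper leaves implicit: the appeal to Eberlein--\v{S}mulian, and the fact that $e_{a,w}\in\G_0(B_Y)$ because it is a norm limit of difference quotients.
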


\begin{proof}
	In the light of Proposition \ref{mol}, let us show that $$m_n=\frac{\delta(f(x_n))-\delta(f(y_n))}{\|x_n-y_n\|},$$ 
	with $x_n \neq y_n \in B_X$ for all $n \in \mathbb{N}$, admits a weakly convergent subsequence. For convenience, let us denote $a_n=f(x_n)$, $b_n=f(y_n)$ and $w_n=\frac{a_n-b_n}{\|x_n-y_n\|}$. 
	
	Because of $(i)$ and $(ii)$, we may assume up to passing to a subsequence, that $a_n \longrightarrow a \in B_Y$ and $b_n \longrightarrow b \in B_Y$ for the norm topology. In addition, because $(x_n)$ and $(y_n)$ are sequences in $B_X$, we also have, up to an extraction, $\|x_n-y_n\| \longrightarrow d \geq 0$. Moreover, since $Y$ is reflexive, up to passing to a subsequence, $w_n \to w$ for the weak topology. Let us consider two cases:
	\begin{enumerate}
		\item[(1)] If $d>0$, then it is clear that $(m_n)$ converges for the norm topology (and so for the weak topology) to $\frac{\delta(a)-\delta(b)}{d}$.
		\item[(2)] If $d=0$, then $a=b$ because $f$ is Lipschitz. Then, we can use Lemma~\ref{Bayart} to show that $m_n \to e_{a,w}$ weakly.
	\end{enumerate}
\end{proof}

Of course, Theorem~\ref{CSwcomp} should be compared with Theorem~\ref{CScomp}. Besides the reflexivity assumption on
$Y$, the only difference in the hypotheses is that the condition ``$L_f$ is relatively compact'' in
Theorem~\ref{CScomp} is replaced by the weaker requirement that ``$f(B_X)$ is relatively compact''.
Note that when $Y$ is reflexive, every bounded subset of $Y$ is relatively weakly compact; in particular, both $L_f$ and $f(B_X)$ are automatically relatively weakly compact. Nevertheless, as the example below shows, relative compactness of $f(B_X)$ does not imply relative compactness of $L_f$. 

\begin{example}\label{ex:lp-compact-range-not-Lf}
	Let $1\le p<\infty$ and set $X=Y=\ell_p$. Define, for $x=(x_n)\in B_{\ell_p}$,
	\[
	f(x):=\Big(\frac{x_n^{\,n^2}}{n^2}\Big)_{n\ge2}.
	\]
	Then $f$ is clearly 1-Lipschitz and holomorphic, so $f\in \HL_0(B_{\ell_p},\ell_p)$. Moreover,
	\[
	f(B_{\ell_p})\subset K:=\left\{u\in \ell_p:\ \forall n, \ |u_n|\le \frac{1}{n^2}\right\}.
	\]
	Since $\left(\frac{1}{n^2}\right)\in \ell_p$, the tails of $K$ are uniformly small:
	\[
	\sup_{u\in K}\|u-P_Nu\|_p\le \Big(\sum_{n>N}n^{-2p}\Big)^{1/p}\xrightarrow[N\to\infty]{}0,
	\]
	where $P_N$ is the projection onto the $N$ first coordinates. Since $P_N(K)$ is compact in $\C^N$, $K$ is compact. Thus $f(B_{\ell_p})$ is relatively compact. 
	
	Now, set $t_n:=\frac{1}{n^2}$, $r_n:=1-\frac{1}{n^4}$, $x^{(n)}:=r_ne_n$, $y^{(n)}:=(r_n-t_n)e_n$, where $(e_n)$ is the standard unit vector basis of $\ell_p$.
	Then $x^{(n)},y^{(n)}\in B_{\ell_p}$ and 
	$$\|x^{(n)}-y^{(n)}\|_p=t_n\to 0.$$
	The slopes $w_n:=\dfrac{f(x^{(n)})-f(y^{(n)})}{\|x^{(n)}-y^{(n)}\|_p}\in L_f$ satisfy
	\[
	w_n=\frac1{n^2}\cdot\frac{r_n^{n^2}-(r_n-t_n)^{n^2}}{t_n}\,e_n
	=\lambda_n^{n^2-1}e_n
	\]
	for some $\lambda_n\in(r_n-t_n,r_n)$ (mean value theorem). Since $\lambda_n \geq 1-\frac{1}{n^2}-\frac{1}{n^4}$ so that $\lambda_n^{n^2-1} \geq (1-\frac{1}{n^2}-\frac{1}{n^4})^{n^2-1}\to e^{-1}$.
	Hence $\liminf \|w_n\|_p\ge \frac{e^{-1}}{2}$ for all large $n$. In particular $\|w_n-w_m\|_p\ge 2^{\frac{1}{p}} \frac{e^{-1}}{2}>0$ for $n\neq m$ large enough, and $L_f$ is not relatively compact.
\end{example}

Note that, for $1<p<\infty$, this already yields an example of a weakly compact operator $\widehat f$ (by Theorem~\ref{CSwcomp})
which fails to be compact (see Proposition~\ref{CNcomp}). Another example will be given later.

One can now prove the metric characterization of weak compactness in the case where the target space is $\ell_p$ that constitutes Theorem \ref{thmE}.

\begin{theorem}\label{recip}
	Let $1 <p<\infty$ and let $f \in \HL_0(B_X,\ell_p)$ such that $f(B_X) \subset B_{\ell_p}$ and let $\widehat{f} : \G_0(B_X) \longrightarrow \G_0(B_{\ell_p})$ the associated holomorphic-Lipschitz operator. Then:
    $$\widehat{f} \text{ is weakly compact } \iff \overline{f(B_X)} \text{ is compact and } \|f\|_{\infty}<1.$$

    Moreover, if $p=1$, one implication remains true:
    $$\widehat{f} \text{ is weakly compact } \implies \overline{f(B_X)} \text{ is compact and } \|f\|_{\infty}<1,$$
    but the converse is not true.
\end{theorem}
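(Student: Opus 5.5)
The forward implication holds for every $1\le p<\infty$ and uses only results established earlier. Assume $\widehat f$ is weakly compact. Proposition~\ref{lem:wkcomp-strict-contraction} gives $\overline{f(B_X)}\subset B_{\ell_p}$. To get norm compactness of $\overline{f(B_X)}$, we argue by contradiction with the case analysis of Section~\ref{subsec:case-analysis}. Suppose $(a_n)=(f(x_n))$ has no norm-convergent subsequence. Since $\ell_p$ is reflexive (or, for $p=1$, since $\ell_1$ is a dual space with separable predual), we may pass to a weakly (respectively weak$^*$) convergent subsequence. Take $y_n:=0$, so $b_n=0$ and $m_n=(\delta(a_n)-\delta(0))/\|x_n\|$. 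Because $|a_n|\le\|x_n\|$ by the Schwarz lemma and $a_n\not\to 0$, we have $d_n=\|x_n\|\to d>0$. We are therefore in case (I) of the macro regime. Lemma~\ref{lem:macro-case-I-lower}, together with the upper estimate \eqref{eq:upper-l1}, shows that a subsequence of $(m_n)\subset\widehat f(\mathcal M)$ is equivalent to the $\ell_1$ basis. This contradicts relative weak compactness via Proposition~\ref{mol}. (Alternatively, for $p=1$ one can invoke Theorem~\ref{thm:main}, which gives that $\widehat f$ is compact, and then Theorem~\ref{caractcomp}.) Once $\overline{f(B_X)}$ is compact and contained in $B_{\ell_p}$, Proposition~\ref{prop-images-relation} (property (P3) with compact closure) yields $\|f\|_\infty<1$.

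For $1<p<\infty$, the converse follows directly from Theorem~\ref{CSwcomp}. Indeed, $\ell_p$ is reflexive, $\|f\|_\infty<1$ implies $\overline{f(B_X)}\subset B_{\ell_p}$, and $f(B_X)$ is relatively compact by hypothesis, so $\widehat f$ is weakly compact.

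For $p=1$, the failure of the converse comes from Example~\ref{ex:lp-compact-range-not-Lf} with $p=1$, scaled to force $\|f\|_\infty<1$. Take $g:=\tfrac12 f$. Then $g(B_{\ell_1})$ is still relatively compact, $\|g\|_\infty\le\tfrac12<1$, and $L_g=\tfrac12 L_f$ is still not relatively compact. If $\widehat g$ were weakly compact, Theorem~\ref{thm:main} would make it compact, and Proposition~\ref{CNcomp} would then force $L_g$ to be relatively compact, which is a contradiction.

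The main obstacle is the compactness step in the forward direction. One must check that the hypotheses of Lemma~\ref{lem:macro-case-I-lower} really hold: $b_n=0$ converges in norm, and $(a_n)$ is weakly (weak$^*$) convergent but not relatively compact. One must also check that the lemma's lower $\ell_1$-estimate, uniform over the signs $s$, gives $\ell_1$-equivalence. The argument should go through because Lemma~\ref{Fs} gives $\|F_s\|_L\le 1$ for every choice of signs.
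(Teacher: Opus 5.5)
Your proposal is correct and follows the paper's proof: you obtain the forward direction by the case (I) macro-regime $\ell_1$-extraction with $b_n=0$ (Lemma~\ref{lem:macro-case-I-lower}), combined with Propositions~\ref{lem:wkcomp-strict-contraction} and~\ref{prop-images-relation}; the converse for $1<p<\infty$ by Theorem~\ref{CSwcomp}; and the failure for $p=1$ by Theorem~\ref{thm:main}, Proposition~\ref{CNcomp} and Example~\ref{ex:lp-compact-range-not-Lf}, and you also supply the weak$^*$ detail for $p=1$ that the paper glosses over. The rescaling by $\tfrac12$ is harmless but unnecessary, since that example already satisfies $\|f\|_\infty\le\sum_{n\ge2}n^{-2}<1$.
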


\begin{proof}
	Assume that $\widehat{f}$ is weakly compact and $\overline{f(B_X)}$ is not compact. Then there exist $\varepsilon>0$ and a sequence $(x_n)\subset B_X$ such that
	$(f(x_n))$ is $\varepsilon$-separated. In particular, $(x_n)$ cannot converge to $0$ (otherwise $f(x_n)\to 0$).
	Passing to a subsequence, we may therefore assume that $\|x_n\|\to d$ for some $d\in(0,1]$. Set
	$$
	m_n:=\frac{\delta(f(x_n))-\delta(0)}{\|x_n\|}\in \G_0(B_{\ell_p})
	\qquad(n\in\N).
	$$
	Since $d>0$, this is a macro situation. Arguing as in the macro regime of Section~\ref{subsubsec:macro} (with $b_n=0$
	and $a_n=f(x_n)$), one extracts a subsequence of $(m_n)$ equivalent to the canonical basis of $\ell_1$.
	In particular, $(m_n)$ is not relatively weakly compact in $\G_0(B_{\ell_p})$, which contradicts the weak compactness of
	$\widehat f$. Hence $f$ must be compact.

	To finish the proof of the first part, we apply Proposition~\ref{lem:wkcomp-strict-contraction} to obtain that $\overline{f(B_X)} \subset B_{\ell_p}$ and since $\overline{f(B_X)}$ is compact, Proposition \ref{prop-images-relation} ensures that $\|f\|_{\infty}<1$. Hence, we have the implication.

    If $p \neq 1$, then $\ell_p$ is reflexive. If $\|f\|_{\infty}<1$, then $\overline{f(B_X)} \subset B_{\ell_p}$ and Theorem~\ref{CSwcomp} gives the implication. The converse is not true because we know that when the range space is $\ell_1$, compactness and weak compactness are equivalent. So if the converse was true, it would mean that $\overline{f(B_X)}$ compact and $\overline{L_f}$ compact would be equivalent (see Theorem \ref{caractcomp}) but it is not the case, see Example \ref{ex:lp-compact-range-not-Lf}.
\end{proof}

\begin{example}\label{Tsirelson}
This characterization cannot be generalized to every reflexive spaces. Indeed, consider $X=T^*$ the original Tsirelson space (see \cite[Theorem 10.3.2]{AK}. It is known that $X$ is reflexive. In \cite[Example 4.20]{LuisCarlos}, the authors considered $f : x \in X \longmapsto \frac{x}{2} \in X$. One has $f \in HL_0(B_X,X)$ with $f(B_X) \subset B_X$. The authors of \cite{LuisCarlos} showed that $\widehat{f}$ is weakly compact. However, $f(B_X)=\frac{1}{2}B_X$ is not relatively compact, since $X$ is infinite dimensional.
\end{example}

Let us now provide another weakly compact but not compact $\widehat{f}$, in the case $1<p<\infty$.
\begin{example}\label{wcpNOTcp}
Fix $1<p<\infty$ and set $\lambda:=\left(\frac{1}{n}\right)_{n\ge 2}\in \ell_p$.
Choose a constant $\alpha>0$ such that $\|\alpha\lambda\|_p<1$ (for instance,
$\alpha:=\big(2\|\lambda\|_p\big)^{-1}$).
Define the diagonal map
$$
f\colon B_{\ell_p}\to \ell_p,\qquad
f(x):=\Big(\frac{\alpha}{n}\,x_n^{\,n}\Big)_{n\ge 2}.
$$
Then $f$ is holomorphic, $f(0)=0$, and for $x,h\in B_{\ell_p}$ we have
$$
df(x)(h)=\big(\alpha\, x_n^{\,n-1}h_n\big)_{n\ge 2}.
$$
Since $|x_n|\le \|x\|_p\le 1$ for each $n$, we get $\|df(x)\|_{\infty}\le \alpha$ for all $x\in B_{\ell_p}$, hence
$f\in \HL_0(B_{\ell_p})$.

\smallskip
\noindent\emph{$\widehat f$ is not compact.}
Let $x_n:=\big(1-\frac1n\big)e_n$ and $y_n:=e_n$ for $n\ge 2$. Then $x_n,y_n\in B_{\ell_p}$ and
$d_n:=\|x_n-y_n\|_p=\frac1n\to 0$.
Moreover,
$$
f(x_n)=\frac{\alpha}{n}\Big(1-\frac1n\Big)^n e_n,
\qquad
f(y_n)=\frac{\alpha}{n}e_n.
$$
Set
$$
v_n:=\frac{\delta(f(x_n))-\delta(f(y_n))}{d_n}\in \G_0(B_{\ell_p}).
$$
Testing against the coordinate functional $e_n^*\in (\ell_p)^*\equiv \ell_q\subset \HL_0(B_{\ell_p})$, we obtain
$$
\langle v_n,e_n^*\rangle=\frac{e_n^*(f(x_n))-e_n^*(f(y_n))}{d_n}
=\alpha\Big(\Big(1-\frac1n\Big)^n-1\Big).
$$
Hence
$$
\|v_n-v_m\|\ge |\langle v_n-v_m,e_n^*\rangle|=|\langle v_n,e_n^*\rangle|
=\alpha\Big(1-\Big(1-\frac1n\Big)^n\Big)\xrightarrow[n\to\infty]{}\alpha(1-e^{-1})>0.
$$
In particular, $(v_n)$ has no norm convergent subsequence, so $\widehat f$ is not compact
(by Proposition~\ref{mol}).

\smallskip
\noindent\emph{$\widehat f$ is weakly compact.}
For every $x\in B_{\ell_p}$ we have $|x_n^{\,n}|\le 1$, hence
$$
\|f(x)\|_p\le \|\alpha\lambda\|_p<1
\quad\text{and}\quad
f(B_{\ell_p})\subset \prod_{n\ge 2}\overline{D}\Big(0,\frac{\alpha}{n}\Big),
$$
which is a compact subset of $\ell_p$ (uniform tail control since $\left(\frac{\alpha}{n}\right)\in \ell_p$).
Therefore $\overline{f(B_{\ell_p})}$ is compact and contained in $B_{\ell_p}$.
Since $\ell_p$ is reflexive, Theorem~\ref{CSwcomp} yields that $\widehat f$ is weakly compact.
\end{example}

\subsection{The case when \texorpdfstring{$f$}{f} is linear}

\begin{cor}\label{cor:lin-compactness}
Assume that $f : X \to Y$ is linear and that $f(B_X) \subset B_Y$. Then, $\widehat{f} : \G_0(B_X) \to \G_0(B_Y)$ is compact if and only if $\overline{f(B_X)}$ is compact and  $\|f\|_{\infty}<1$.
\end{cor}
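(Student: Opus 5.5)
The plan is to reduce this to Theorem~\ref{caractcomp}, using linearity to identify $L_f$ with a rescaled version of $f(B_X)$. Since $f$ is linear, for $x\neq y$ in $B_X$ we have
$$\frac{f(x)-f(y)}{\|x-y\|}=f\Big(\frac{x-y}{\|x-y\|}\Big)\in f(S_X).$$
Conversely, every $u\in S_X$ arises this way, e.g.\ with $x=u/2$ and $y=-u/2$. Hence $L_f=f(S_X)$.

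Next I compare $f(S_X)$ with $f(B_X)$. On one side, $f(B_X)\subset\bigcup_{\lambda\in[0,1]}\lambda f(S_X)$. On the other, for $u\in S_X$ and $t<1$ close to $1$, the point $f(tu)=tf(u)$ lies in $f(B_X)$, so $f(S_X)\subset\overline{f(B_X)}$. Thus $\overline{L_f}\subset\overline{f(B_X)}$, and by Lemma~\ref{lemma:Lf-to-fBX} compactness of $\overline{L_f}$ implies compactness of $\overline{f(B_X)}$. It follows that $\overline{L_f}$ is compact if and only if $\overline{f(B_X)}$ is compact, because a closed subset of a compact set is compact.

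Finally I plug this equivalence into Theorem~\ref{caractcomp}, which states that $\widehat f$ is compact if and only if $\overline{L_f}$ is compact and $\|f\|_\infty<1$. Replacing the first condition by compactness of $\overline{f(B_X)}$ gives the claim.

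The statement says $f\colon X\to Y$ is linear, so $\widehat f$ is understood for the restriction of $f$ to $B_X$. The hypothesis $f(B_X)\subset B_Y$ gives $\|f\|\le1$, so this restriction lies in $\HL_0(B_X,Y)$ and Theorem~\ref{caractcomp} applies. I expect no real obstacle. The only point needing care is $f(S_X)\subset\overline{f(B_X)}$, which follows from continuity of $f$ along the radius $t\mapsto tu$.
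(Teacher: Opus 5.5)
Your argument is correct, but it is organized differently from the paper's. You use linearity exactly once, to get the identity $L_f=f(S_X)$; your choice $x=u/2$, $y=-u/2$ gives the reverse inclusion. From this you deduce that $\overline{L_f}$ is compact if and only if $\overline{f(B_X)}$ is, and then read the corollary off Theorem~\ref{caractcomp}.

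The paper never invokes Theorem~\ref{caractcomp} here. For sufficiency it notes that $D_f=\{f(w):w\in S_X\}$ and combines Remark~\ref{rmkDf} with Theorem~\ref{CScomp}. For necessity it gets compactness of $\overline{f(B_X)}$ from Proposition~\ref{CNcomp} and Lemma~\ref{lemma:Lf-to-fBX}. It then gets property \textnormal{\ref{p2}} from Theorem~\ref{CNcomp2}, and $\|f\|_\infty<1$ from Remark~\ref{remp4p1}, which uses linearity a second time.

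Your route is shorter and shows that the corollary is just Theorem~\ref{caractcomp} plus the identity $L_f=f(S_X)$. It also shows that Remark~\ref{remp4p1} is superfluous. Indeed, \textnormal{\ref{p2}} already gives \textnormal{\ref{p3}}, and \textnormal{\ref{p3}} together with compactness of $\overline{f(B_X)}$ gives \textnormal{\ref{p1}} for any $f$ by Proposition~\ref{prop-images-relation}.

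The paper's route, in exchange, relies only on the elementary one-sided results: Theorem~\ref{CScomp}, Proposition~\ref{CNcomp} and Theorem~\ref{CNcomp2}. By contrast, the paper's proof of Theorem~\ref{caractcomp} passes through Proposition~\ref{lem:wkcomp-strict-contraction}. That in turn goes through Corollary~\ref{cor:Finite-dim-range} and the $\ell_1$ gliding-hump analysis of Theorem~\ref{thm:main}. This dependence is inessential, since Theorem~\ref{CNcomp2} already yields $\overline{f(B_X)}\subset B_Y$ from compactness. Moreover, Theorem~\ref{caractcomp} does not depend on the present corollary, so your appeal to it is not circular.
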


\begin{proof}
Assume that $\overline{f(B_X)}$ is compact and $\|f\|_{\infty}<1$. Then $\overline{f(B_X)}$ is contained in $B_Y$. Since $f$ is linear, $\{df(x)(w) : x \in B_X, w \in S_X\}=\{f(w) : w \in S_X\}$ is relatively compact by assumption. Hence, using Theorem~\ref{CScomp} and Remark~\ref{rmkDf}, $\widehat{f}$ is compact. 

Conversely, assume that $\widehat{f}$ is compact. It follows from Proposition~\ref{CNcomp} that $L_f$ is relatively compact, and then from Lemma~\ref{lemma:Lf-to-fBX} that $\overline{f(B_X)}$ is compact. By Theorem~\ref{CNcomp2}, \ref{p2} is satisfied and since $f$ is linear, Remark~\ref{remp4p1} gives $\|f\|_{\infty}<1$.
\end{proof}

\begin{corollary}
Let $f : X \longrightarrow \ell_p$ linear such that $f(B_X) \subset B_{\ell_p}$. Then:
$$\widehat{f} \text{ is compact } \iff \widehat{f} \text{ is weakly compact. }$$
\end{corollary}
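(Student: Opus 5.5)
Compactness always implies weak compactness, so only the converse needs an argument. I will assume that $\widehat f$ is weakly compact and show that it is compact by checking the criterion of Corollary~\ref{cor:lin-compactness}: I need $\overline{f(B_X)}$ to be compact and $\|f\|_\infty<1$.

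I will split according to $p$.

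\emph{Case $p=1$.} Theorem~\ref{thm:main} already says that weak compactness and compactness of $\widehat f$ coincide for any $\ell_1$-valued symbol. Linearity plays no role here.

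\emph{Case $1<p<\infty$.}
\begin{enumerate}
\item The first half of Theorem~\ref{recip} applies to every $f\in \HL_0(B_X,\ell_p)$ with $f(B_X)\subset B_{\ell_p}$, so in particular to linear ones. It shows that weak compactness of $\widehat f$ forces $\overline{f(B_X)}$ to be compact and $\|f\|_\infty<1$.
\item Corollary~\ref{cor:lin-compactness} then gives compactness of $\widehat f$.
\end{enumerate}
In step 2, linearity is what makes the criterion work. The set $\{df(x)(w): x\in B_X,\ w\in S_X\}$ equals $f(S_X)\subset \overline{f(B_X)}$, so it is relatively compact. By Remark~\ref{rmkDf}, $L_f$ is then relatively compact, and Theorem~\ref{CScomp} applies.

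The only step that needs care is the necessity direction of Theorem~\ref{recip}, namely that weak compactness forces $\overline{f(B_X)}$ to be compact. That direction rests on the macro-regime extraction of an $\ell_1$-sequence. For linear $f$ it can also be seen directly. If $f(B_X)$ is not relatively compact, choose $x_n$ with $(f(x_n))$ separated and $\|x_n\|\to d>0$. Then case (I) of the macro analysis, with $b_n=0$ and Lemma~\ref{lem:macro-case-I-lower}, gives an $\ell_1$-subsequence of molecules. This contradicts Proposition~\ref{mol}.

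All remaining steps are direct citations, so I expect no real obstacle.
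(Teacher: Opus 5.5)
Your proof is correct and follows the paper's argument: the necessity half of Theorem~\ref{recip} turns weak compactness into compactness of $\overline{f(B_X)}$ together with $\|f\|_\infty<1$, and Corollary~\ref{cor:lin-compactness} then gives compactness of $\widehat f$. The only difference is that the paper does not split off $p=1$, because the necessity half of Theorem~\ref{recip} is stated for $p=1$ as well; your separate appeal to Theorem~\ref{thm:main} in that case is equally valid.
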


\begin{proof}
Assume that $\widehat{f}$ is weakly compact. Then by Theorem \ref{recip}, $\overline{f(B_X)}$ is compact and $\|f\|_{\infty}<1$ and then by Corollary \ref{cor:lin-compactness}, $\widehat{f}$ is compact.  
\end{proof}

\subsection{The case when \texorpdfstring{$\dim(X)<\infty$}{dim(X)<infty}}

The following result has already been shown in \cite[Proposition~4.14]{LuisCarlos} with a different approach. 

\begin{corollary}
	Let $f \in \HL_0(B_X,Y)$  satisfy $f(B_X)\subset B_Y$ with $\dim(X)<\infty$ and $Y$ reflexive. Then:
	$$\widehat{f} \text{ is weakly compact } \iff \|f\|_{\infty}<1 \iff \overline{f(B_X)} \subset B_Y.$$
\end{corollary}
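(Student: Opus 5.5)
We prove the cycle of implications weakly compact $\Rightarrow \overline{f(B_X)}\subset B_Y \Rightarrow \|f\|_\infty<1 \Rightarrow$ weakly compact, using only results established above.

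\emph{First implication.} If $\widehat f$ is weakly compact, Proposition~\ref{lem:wkcomp-strict-contraction} gives directly that $\overline{f(B_X)}\subset B_Y$. Reflexivity of $Y$ and finite dimensionality of $X$ are not needed at this step.

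\emph{Second implication.} Since $\dim(X)<\infty$, the equivalences in Proposition~\ref{prop-images-relation} give (P3)$\iff$(P1), that is, $\overline{f(B_X)}\subset B_Y$ implies $\|f\|_\infty<1$. We spell out the argument, since the closed ball $\overline{B_X}$ is compact and this is the only delicate point.
\begin{itemize}
\item As $f$ is Lipschitz, it extends uniquely to a continuous map $\tilde f$ on $\overline{B_X}$.
\item Then $\tilde f(\overline{B_X})\subset \overline{f(B_X)}\subset B_Y$, and $\tilde f(\overline{B_X})$ is compact.
\item The norm therefore attains a maximum strictly less than $1$ on $\tilde f(\overline{B_X})$, which gives $\|f\|_\infty<1$.
\end{itemize}

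\emph{Third implication.} Assume $\|f\|_\infty<1$; we apply Theorem~\ref{CSwcomp}. Hypothesis (i), $\overline{f(B_X)}\subset B_Y$, follows from $\|f\|_\infty<1$. Hypothesis (ii), relative compactness of $f(B_X)$, holds because $f(B_X)\subset \tilde f(\overline{B_X})$, which is compact as the continuous image of the compact set $\overline{B_X}$ ($\dim X<\infty$). Since $Y$ is reflexive, Theorem~\ref{CSwcomp} yields that $\widehat f$ is weakly compact.

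The only real subtlety is the continuous extension to $\overline{B_X}$ in the second and third implications. Everything else reduces directly to the preceding results.
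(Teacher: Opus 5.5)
Your proof is correct and follows the paper's argument. The paper likewise combines Proposition~\ref{lem:wkcomp-strict-contraction} (weak compactness implies $\overline{f(B_X)}\subset B_Y$), Proposition~\ref{prop-images-relation} in the finite-dimensional-domain case ($\overline{f(B_X)}\subset B_Y \iff \|f\|_\infty<1$), and Theorem~\ref{CSwcomp} for the converse. Your continuous-extension argument on the compact set $\overline{B_X}$ just spells out what the paper leaves as following ``easily from the compactness of $\overline{B_X}$.''
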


\begin{proof}
	The second equivalence is contained in Proposition~\ref{prop-images-relation}, and the first one is covered by Proposition~\ref{lem:wkcomp-strict-contraction} and Theorem~\ref{CSwcomp}.
\end{proof}

The next example sheds some light to the case when the domain $X$ is finite dimensional and the target space $Y$ is not reflexive. Note that, if $\dim(X)<\infty$, $\overline{B_X}$ is compact so that $\overline{f(B_X)}$ is automatically compact. 

\begin{example}
There exist Banach spaces $X,Y$ with $\dim(X)<\infty$ and $f \in \HL_0(B_X,Y)$ such that $\|f\|_{\infty}<1$ but $\widehat{f} : \G_0(B_X) \longrightarrow \G_0(B_Y)$ is not weakly compact. 

Take $X=\mathbb{R}$ and $Y=c_0$ the vector space of all the sequences which converge to~$0$, endowed with the supremum norm. Consider:
$$\fonction{f}{]-1;1[}{c_0}{x}{\left(\frac{1}{2n}x^n\right)_{n \geq 1}.}$$
It is clear that $f$ is well defined, holomorphic on $]-1;1[$, with $\|df\|_{\infty} \leq 1$ so that $f$ is Lipschitz with $\|f\|_L \leq 1$. It is also clear that $\|f\|_{\infty}<1$.

Consider the sequences $(x_k)$ and $(y_k)$ defined by $x_k=1$ and $y_k=1-\frac{1}{k}$ for $k \geq 1$. Then direct computations show that:
\begin{align*}\frac{f(x_k)-f(y_k)}{|x_k-y_k|}&=\frac{\left(\frac{1}{2n}\left(1-\left(1-\frac{1}{k}\right)^n\right)\right)_{n \geq 1}}{\frac{1}{k}}
=\left(  \frac{1}{2n} \sum\limits_{j=0}^{n-1} \left(1-\frac{1}{k}\right)^{j} \right)_{n \geq 1}.
\end{align*}
Each coordinate tends to $1$ when $k$ goes to $\infty$. Hence, this sequence cannot have any weakly convergent subsequence in $c_0$. Consequently, $L_f$ is not relatively weakly compact, and by Proposition \ref{CNcomp}, $\widehat{f}$ is not weakly compact.

\end{example}

Now, the question that naturally arises is: if $\dim(X)<\infty$ and $Y$ is reflexive, do we have the equivalence between compactness and weak compactness of $\widehat{f}$ ? The answer is no, as Example \ref{exdimfinie} will show. 
\smallskip

For this purpose, we need some tools about interpolating sequences in the unit disc of the complex plane $\mathbb{D}$, which will be helpful to construct a counterexample. The following definition and theorem, and further results on interpolating sequences can be found in John B. Garnett's book \cite{Garnett}.

In the sequel, we will denote $\mathbb{H}=\{z \in \mathbb{C} ~  | ~  \mathrm{Im}(z)>0\}$ the upper half-plane. The Banach space of all bounded analytic functions on $\mathbb{H}$ endowed with the supremum norm is denoted $H^{\infty}$. Similarly, $H^{\infty}(\mathbb{D})$ is the space of all bounded analytic functions on $\mathbb{D}$. 

\begin{definition}
    We say that a sequence $(z_k) \subset \mathbb{D}$ (resp. $(z_k) \subset \mathbb{H}$) is an interpolating sequence when the linear operator $$\fonction{R}{H^{\infty}(\mathbb{D})}{\ell_{\infty}}{f}{(f(z_j))_{j \geq 1}} \left(\text{ resp. } \fonction{R}{H^{\infty}}{\ell_{\infty}}{f}{(f(z_j))_{j \geq 1}}\right) $$ is surjective.
\end{definition}

Consider the Cayley transform $\varphi : \mathbb{H} \longrightarrow \mathbb{D}$ defined by $\varphi(z)=\frac{z-i}{z+i}$. It is a conformal mapping from $\mathbb{H}$ to $\mathbb{D}$, that is, a bijective biholomorphic map (see \cite{rudin} for background on conformal mappings). It is not hard to show that $(z_k)$ is an interpolating sequence in the upper half plane if and only if $(\varphi(z_k))$ is an interpolating sequence in $\mathbb{D}$. 
\smallskip

Let $(a_j) \in \ell_{\infty}$ and $(z_j)$ be an interpolating sequence in $\mathbb{D}$. Then, since the operator $R$ is surjective, the open mapping theorem gives a constant $M$ such that there exists $f \in H^{\infty}(\mathbb{D})$ with
$$\|f\|_{\infty} \leq M\sup_j |a_j|=M\|(a_j)\|_{\infty}.$$
The smallest such constant $M$ is called \emph{the constant of interpolation}:
$$M=\sup_{\|a_j\|_{\infty} \leq 1} \inf \{\|f\|_{\infty} : f \in H^{\infty}(\mathbb{D}), f(z_j)=a_j, ~   j \in \mathbb{N}\}.$$

\smallskip

The following theorem is stated in \cite[Theorem 2.1, p.285]{Garnett} with an interpolating sequence $(z_k) \subset \mathbb{H}$. It is rather straightforward to see that it is still true for an interpolating sequence in $\mathbb{D}$, using again the Cayley transform. It will be the key for constructing the counterexample. 

\begin{theorem}\label{interpolating}
  Let $(z_k)$ be an interpolating sequence in $\mathbb{D}$, and let
    $$M=\sup_{\|a_j\| \leq 1} \inf\{\|f\|_{\infty} : f \in H^{\infty}(\mathbb{D}), f(z_j)=a_j, j \geq 1\}$$
    be the constant of interpolation. Then there exist functions $f_j \in H^{\infty}(\mathbb{D})$ such that $f_j(z_j)=1$, $f_j(z_k)=0$ for $k \neq j$ and $\sum_j |f_j(z)|\leq M$.  
\end{theorem}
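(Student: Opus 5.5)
The plan is to transfer the half-plane statement of \cite[Theorem 2.1, p.285]{Garnett} to the disc through the Cayley transform $\varphi(z)=\frac{z-i}{z+i}$, which is a biholomorphic bijection from $\mathbb H$ onto $\mathbb D$. Fix an interpolating sequence $(z_k)\subset\mathbb D$ and set $\zeta_k:=\varphi^{-1}(z_k)\in\mathbb H$.

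\emph{Step 1: $(\zeta_k)$ is interpolating in $\mathbb H$ with the same constant.} Composition with $\varphi$ gives a linear isometric bijection
\[
C_\varphi\colon H^\infty(\mathbb D)\to H^\infty,\qquad g\mapsto g\circ\varphi,
\]
since $\varphi$ is a bijection, so the supremum norms agree. Its inverse is $C_{\varphi^{-1}}$. For every bounded sequence $(a_j)$, the functions $g\in H^\infty(\mathbb D)$ with $g(z_j)=a_j$ correspond bijectively, and with equal norms, to the functions $h=g\circ\varphi\in H^\infty$ with $h(\zeta_j)=a_j$. Consequently the restriction operator on $\mathbb H$ is the restriction operator on $\mathbb D$ composed with $C_{\varphi^{-1}}$, so it is surjective. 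Both infima in the definition of the interpolation constant also coincide, hence the constant is the same number $M$ in both settings.

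\emph{Step 2: apply the half-plane theorem.} By \cite[Theorem 2.1, p.285]{Garnett} there exist $h_j\in H^\infty$ with
\[
h_j(\zeta_j)=1,\qquad h_j(\zeta_k)=0\ (k\neq j),\qquad \sum_j|h_j(\zeta)|\le M\ \text{ for all }\zeta\in\mathbb H.
\]

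\emph{Step 3: pull back to the disc.} Define $f_j:=h_j\circ\varphi^{-1}\in H^\infty(\mathbb D)$. Then $f_j(z_k)=h_j(\zeta_k)$, which is $1$ if $k=j$ and $0$ otherwise. For $z\in\mathbb D$, put $\zeta=\varphi^{-1}(z)$; then $\sum_j|f_j(z)|=\sum_j|h_j(\zeta)|\le M$. This gives the statement.

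The only point needing care is Step 1, namely that the interpolation constant is preserved and not merely comparable. This follows because $C_\varphi$ is isometric and maps the set of interpolants exactly onto the set of interpolants. I do not expect any real obstacle; the result is a bookkeeping transfer of Garnett's theorem.
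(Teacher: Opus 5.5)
Your argument is the one the paper relies on: the paper justifies the disc version only by citing Garnett's half-plane theorem and remarking that it transfers through the Cayley transform, and your Steps 1--3 (the isometric composition operator $C_\varphi$, the resulting equality of interpolation constants, and the pullback $f_j=h_j\circ\varphi^{-1}$) are a correct write-up of that transfer. One caveat applies equally to the paper's statement: in Garnett's book the half-plane result (Theorem~VII.2.1, due to P.~Beurling) is stated with the bound $\sum_j|h_j|\le M^2$, not $M$ --- its proof averages interpolants of roots-of-unity data to get $\sum_j|G_j|^2\le M^2$ and then squares the $G_j$ --- so your Step~2 should read $M^2$, which is harmless for Example~\ref{exdimfinie} (use $\frac{1}{2M^2}$ in place of $\frac{1}{2M}$).
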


We can now give a counterexample of a function $f \in HL_0(B_X,Y)$ such that $f(B_X) \subset B_Y$, $\dim(X)<\infty$, $Y$ is reflexive, and $\widehat{f}$ is weakly compact but not compact:

\begin{example}\label{exdimfinie}
Let $(z_k) \subset \mathbb{D}$ be an interpolating sequence and $(f_j)$ the functions in $H^{\infty}(\mathbb{D})$ given by Theorem~\ref{interpolating}. Then, for all $z \in \mathbb{D}$, $\sum_j |f_j(z)| \leq M$.
Let
$$
\fonction{g}{\mathbb{D}}{\ell^2}{w}{\left(\frac{1}{2M}f_k(w)\right)_k.}
$$
Then, for every $w \in \mathbb{D}$,
$$
\|g(w)\|_{\ell_2} \leq \frac{1}{2M}\sum_k |f_k(w)| \leq \frac12.
$$
Consider $f$ the primitive of $g$ which vanishes at $0$, namely
$$
f(z)=\int_0^z g(w)\,dw,
$$
where the integral is understood in the Bochner sense. Then $f$ is holomorphic and Lipschitz since $f'=g$ is bounded. Moreover, $\|f\|_{\infty}<1$, so $\widehat{f}$ is weakly compact by Theorem~\ref{recip}. However,
$$
f'(z_k)=g(z_k)=\frac{e_k}{2M},
$$
so $D_f$ is not relatively compact. Hence, $\widehat{f}$ is not compact by Remark~\ref{rmkDf} and Proposition~\ref{CNcomp}.
\end{example}

\section*{Acknowledgments}
The authors are grateful to L. C. García-Lirola and A. Proch\'azka for sharing with us several preliminary versions of their paper \cite{LuisCarlos}. They also wish to thank Mikhail Mironov for providing Example~\ref{exdimfinie}. The first and last authors were partially supported by the French ANR project No.~ANR-24-CE40-0892.

\section*{AI STATEMENT}

The authors of this article declare that none of them used any AI to find the results presented.

\end{document}